\numberwithin{equation}{section}
\tikzset{
    %Define standard arrow tip
    >=stealth',
        % Define arrow style
    pil/.style={
           ->,
           thick,
           shorten <=2pt,
           shorten >=2pt,}
}
\def \be{\begin{equs}}
\def \ee{\end{equs}}
\def \P{\mathbb{P}}
\def \E{\mathbb{E}}
\newcommand \TV{\mathrm{TV}}
\def \tmix{\tau_{\mathrm{mix}}}
\def \TV{\mathrm{TV}}
\def \L{\mathcal{L}}
\newcommand{\tRe}{\tau_{\mathrm{ret}}}
\newcommand{\vmax}{V_{\mathrm{max}}}
\newcommand{\tEsc[1]}{\tau_{#1,\mathrm{esc}}}
\newtheorem{theorem}{Theorem}[section]
\newtheorem{lemma}[theorem]{Lemma}
\newtheorem{remarks}[theorem]{Remarks}
\newtheorem{defn}[theorem]{Definition}
\theoremstyle{plain}
\newtheorem{thm}{Theorem}
\newtheorem*{thm-non}{Theorem}
\newtheorem{cor}[thm]{Corollary}
\theoremstyle{definition}
\newtheorem{example}[theorem]{Example}
\newtheorem{remark}[theorem]{Remark}
\begin{document}

%\begin{frontmatter}
% "Title of the paper"
\title[Elementary Bounds on Mixing Times for Decomposable Reversible Markov Chains]
{Elementary Bounds on Mixing Times for Decomposable Markov Chains}

% indicate corresponding author with \corref{}
% \author{\fnms{John} \snm{Smith}\corref{}\ead[label=e1]{smith@foo.com}\thanksref{t1}}
 %\thankstext{t1}{Thanks to somebody} 
 %\address{line 1\\ line 2\\ printead{e1}}
% \affiliation{Some University}

\author{Natesh S. Pillai$^{\ddag}$}
\thanks{$^{\ddag}$pillai@fas.harvard.edu, 
   Department of Statistics
    Harvard University, 1 Oxford Street, Cambridge
    MA 02138, USA}

\author{Aaron Smith$^{\sharp}$}
\thanks{$^{\sharp}$smith.aaron.matthew@gmail.com, 
   Department of Mathematics and Statistics
University of Ottawa, 585 King Edward Drive, Ottawa
ON K1N 7N5, Canada}
 % \thanks{NSP is partially supported by NSF-DMS 1107070}

\maketitle

% AOS,AOAS: If there are supplements please fill:
%\begin{supplement}[id=suppA]
%  \sname{Supplement A}
%  \stitle{Title}
%  \slink[url]{http://lib.stat.cmu.edu/aoas/???/???}
%  \sdescription{Some text}
%\end{supplement}

\begin{abstract}
Many finite-state reversible Markov chains can be naturally decomposed into ``projection" and ``restriction" chains. In this paper we provide bounds on the total variation mixing times of the original chain in terms of the mixing properties of these related chains. This paper is in the tradition of existing bounds on Poincar\'{e} and log-Sobolev constants of Markov chains in terms of similar decompositions \cite{JSTV04,MaRa02,MaRa06,MaYu09}. Our proofs are simple, relying largely on recent results relating hitting and mixing times of reversible Markov chains \cite{PeSo13,Oliv12b}. We describe situations in which our results give substantially better bounds than those obtained by applying existing decomposition results and provide examples for illustration.
\end{abstract}

\section{Introduction}

In this paper, we study the rate of convergence to stationarity of an irreducible, aperiodic and reversible Markov chain with kernel $K$ on a finite state space $\Omega$ by decomposing $\Omega$ into subsets $\{  \Omega_{i} \}_{i=1}^{n}$. Our main results bound the total variation mixing time of $K$ in terms of the mixing times of the \textit{traces} (or \textit{restrictions})  $K_{i}$ of $K$ on each subset $\Omega_{i} \subset \Omega$, combined with some information on the mixing of $K$ between the subsets. This latter mixing time is studied through the construction of a \textit{projected} kernel $\overline{K}$ on $\{ 1,2,\ldots, n\}$. Although the details of our constructions differ, this general approach is not new: it has been the subject of a number of papers \cite{MaRa00,JSTV04,MaRa02,MaRa06, MaYu09} and has been successfully applied to many problems (see, \textit{e.g.}, 
 \cite{DLP10,kovchegov2015path}).

Our results, like those in \cite{MaRa00,JSTV04,MaRa02,MaRa06, MaYu09}, are useful in the common situation that a complicated Markov chain is hard to study directly, but is composed of smaller pieces that are easier to study in isolation or have already been studied. Our main goal is to provide bounds for the mixing time that are easy to apply in a wide range of applications. Our main results are based on the remarkable results of \cite{PeSo13,Oliv12b}, where the authors derived an upper bound on the mixing times of reversible Markov chains in terms of their hitting times. 
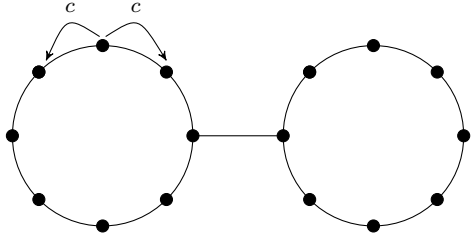
\begin{figure}
\begin{tikzpicture}[scale=0.4]
\draw (2 , 2) circle [radius = 3cm];
\draw (11 , 2) circle [radius = 3cm];
\draw[fill] (5,2) circle [radius=0.20];
\draw[fill] (2,5) circle [radius=0.20];
\draw[fill] (4.12, 4.12)  circle [radius=0.20];
\draw[fill] (-0.12, 4.12)  circle [radius=0.20];
\draw[fill] (4.12, -0.12)  circle [radius=0.20];
\draw[fill] (-0.12, 4.12)  circle [radius=0.20];
\draw[fill] (-0.12, -0.12)  circle [radius=0.20];
\draw[fill] (-1,2) circle [radius=0.20];
\draw[fill] (2,-1) circle [radius=0.20];
\draw[fill] (8,2) circle [radius = 0.20];
\draw[fill] (11,5) circle [radius=0.20];
\draw[fill] (14,2) circle [radius=0.20];
\draw[fill] (11,-1) circle [radius=0.20];
\draw[fill] (	13.12, 4.12)  circle [radius=0.20];
\draw[fill] (9-0.12, 4.12)  circle [radius=0.20];
\draw[fill] (13.12, -0.12)  circle [radius=0.20];
\draw[fill] (9-0.12, 4.12)  circle [radius=0.20];
\draw[fill] (9-0.12, -0.12)  circle [radius=0.20];
\draw (5,2) -- (8,2);
\draw [->] (2.1,5.3) .. controls (3.2, 6) .. (4.12, 4.5)
        node[pos = 0.45, above]{\tiny{$c$}};
\draw [->] (1.9,5.3) .. controls (0.8, 6) .. (0.12, 4.5)
        node[pos = 0.45, above]{\tiny{$c$}};
 \end{tikzpicture}
\caption{PinceNez Graph with $m =8$}
\label{FigPinceNez} 
\end{figure} 

Our bounds are generally not comparable to earlier decomposition bounds, so we give a high level review of those results and explain some ways in which ours can be much better. Let $\varphi_{i}$, $\varphi_{i}^{\mathrm{rel}}$ be the mixing and relaxation times of $K_{i}$, let $\overline{\varphi}$, $\overline{\varphi}^{\mathrm{rel}}$ be the mixing and relaxation times of $\overline{K}$, and let $\varphi$, $\varphi^{\mathrm{rel}}$ be the mixing and relaxation times of $K$. The main results of earlier decomposition bounds all state, roughly, that
\be \label{IneqBasicDecomp}
\varphi^{\mathrm{rel}} = O \big( \overline{\varphi}^{\mathrm{rel}} \, \max_{1 \leq i \leq n} \varphi_{i}^{\mathrm{rel}} \big).
\ee  
The main innovation of \cite{JSTV04} allows this bound to be improved if $K$ satisfies certain regularity conditions. One of the consequences of Theorem 1 of \cite{JSTV04} is that, under certain regularity conditions, the upper bound \eqref{IneqBasicDecomp} can sometimes be replaced with the much smaller bound 
\be \label{IneqBasicDecompRegular}
\varphi^{\mathrm{rel}} = O \big( \max \big( \overline{\varphi}^{\mathrm{rel}}, \, \max_{1 \leq i \leq n} \varphi_{i}^{\mathrm{rel}} \big) \big).
\ee  
Unfortunately, many Markov chains of interest  that in fact satisfy \eqref{IneqBasicDecompRegular} for a natural decomposition do not satisfy the regularity conditions given as sufficient conditions in \cite{JSTV04} (see, \textit{e.g.,} the interacting particle systems we study in \cite{PiSm15}). Some of the main consequences of the results in this paper are new sufficient conditions under which a bound similar to \eqref{IneqBasicDecomp} can be replaced by a stronger bound similar to \eqref{IneqBasicDecompRegular}, and which can be used to obtain stronger bounds on the mixing times of interacting particle systems and other Markov chains. Although we give this overview in terms of relaxation times, all of our main result bounds the mixing time of $K$ in terms of the mixing times of $K_i$ and the occupation times of the original Markov chain on the sets $\Omega_i$, rather than the associated relaxation times.

Our new sufficient conditions can hold when those in \cite{JSTV04} do not, and the new cases that we cover include some important examples. A simple illustrative example is the symmetric random walk on $2m$-vertex ``Pince-nez" graph studied in Section 4.1 of \cite{JSTV04}. The $2m$-vertex Pince-nez graph consists of two copies of the $\mathbb{Z}_m$ connected by a single edge. The graph is pictured in Figure \ref{FigPinceNez} with $m = 8$. Consider a random walk on the $2m$-vertex Pince-nez graph which moves to a neighboring vertex with probability $c= O(1)$. It is natural to partition this graph into two copies of $\mathbb{Z}_m$. Using this partition, the authors in \cite{JSTV04} showed that the relaxation time of the random walk is $O(m^{3})$, which implies a bound of $O(m^{3} \log(m))$ on its mixing time. To our knowledge, no earlier works on decomposition bounds will give a bound better than $O(m^3)$ on the mixing time for this example. Our bounds can be used to show (see Section \ref{SubsecPinceNez}) that its mixing time (and thus its relaxation time) is $O(m^{2})$, which is indeed the correct order.

Of course, this ``Pince-nez" example is simple, and its mixing time can be derived using direct arguments such as coupling. We emphasize this example because it has two traits that are typical of chains and partitions for which our approach can improve on previous results: there are only a small number of `important' parts of the partition (\textit{i.e.,} $n$ is small compared to $\varphi_{i}^{\mathrm{mix}}$ and $\overline{\varphi}^{\mathrm{mix}}$), and the exit probabilities $K(x,\Omega_{2})$ from one part of the partition to another are very far from uniform in the initial point $x \in \Omega_{1}$. There are many interesting examples of Markov chains having these traits. \par

The motivation behind this paper was the study of interacting particle systems. Using the main results of this paper, we were able to resolve a conjecture of David Aldous on the mixing time of a `constrained'  Ising process on the lattice \cite{PiSm15}, up to a logarithmic factor. The kinetically constrained Ising process (KCIP) is described carefully in Example \ref{ExKcipCarefulStatement}, and Example \ref{ExKcipToy} describes a toy version of the KCIP that illustrates the key difference between our bound and that in \cite{JSTV04}.  We expect our bounds to be helpful in the study of other interacting particle systems with a varying number of particles. To explore this, in Section \ref{ExGenInt} we construct a large class of interacting particle systems, show that the bounds in \cite{JSTV04} cannot generally be tight, and explain why our bounds can be.

Our main application in Section \ref{SecRegAssump} gives another situation in which our bound is roughly of the form \eqref{IneqBasicDecompRegular} while the bounds in \cite{JSTV04} are roughly of the form \eqref{IneqBasicDecomp}. For both the KCIP and the family of examples in Section \ref{SecRegAssump}, our bounds allow us to recover the correct mixing time up to a factor that is logarithmic in the problem size, while the bounds from \cite{JSTV04} are off by a factor that is polynomial in the problem size and can be close to the square of the correct mixing time.

Our results can improve upon earlier bounds in other interesting situations, and can be worse than earlier bounds in others. See Section \ref{SubsecUser} for a brief overview, and Inequality \eqref{IneqVisSimilar} for a bound that is most visually similar to earlier decomposition bounds.

\subsection{Paper Overview}

After giving initial notation in Sections \ref{SubsecNotation}, \ref{SubsecProjRest} below, we give a `user's guide' to the paper in Section \ref{SubsecUser}. One of the main attractions of decomposition bounds such as those in \cite{JSTV04} is their ease of use: you can `plug in' estimates of certain familiar quantities related to the kernels $K_{i}$ and $\overline{K}$, such as their relaxation times, to obtain estimates for the kernel $K$. Our bounds are often similarly easy to use. However, the bounds in the different sections of our paper are written in terms of different quantities, and several of our intermediate bounds are written in terms of quantities that may not be familiar to the reader. Thus, it may not be immediately obvious which bounds are most relevant for a given problem. Our `user's guide' is designed to resolve this difficulty, allowing the reader to quickly obtain bounds that are written entirely in terms of familiar quantities, such as a mixing time or Lyapunov function. For several situations, the user's guide describes why our bounds may improve upon earlier bounds, refers to the most relevant and simplest bounds in our paper, and also refers to a prototypical worked example.

In Section \ref{SecGenMix}, we give our main lemmas and apply them to obtain an initial result that is visually similar to earlier decomposition bounds. In Section \ref{SecNaiveBoot} we introduce the notion of the well-covering time as well as a comparison theory for well-covering times; these are used to obtain decomposition bounds that are easier to compute than those in Section \ref{SecGenMix}. In Section \ref{SecSpecCond} we give much stronger decomposition bounds under certain regularity conditions and apply them. Finally, Section \ref{SecAppl} contains two applications. Auxiliary results and derivations are deferred to an Appendix.

\subsection{Basic Notation} \label{SubsecNotation}

We write $\mathbb{N} = \{0,1,2,\ldots\}$. For any $m \geq 1$, define $[m] = \{1,2,\cdots, m\}$.
For two positive functions $f,g$ on $\mathbb{N}$, we write $f = O(g)$ for $\sup_{m} \frac{f(m)}{g(m)} < \infty$, we write $f(m) = \Theta(g)$ if both $f = O(g)$ and $g = O(f)$, and we write $f=o(g)$ for $\limsup_{m \rightarrow \infty} \frac{f(m)}{g(m)} = 0$. For a sequence of positive random variables $\{ X_{m} \}_{m \geq 0}$ and a sequence of positive integers or random variables $\{Y_{m} \}_{m \geq 0}$, we say that $X_{m} = O(Y_{m})$ (or $X_{m} = O(Y_{m})$ with high probability) if for all $\epsilon > 0$ there exists a constant $C = C(\epsilon) < \infty$ so that $\limsup_{m \rightarrow \infty} \P[X_{m} > C Y_{m}] \leq \epsilon$. In both cases, we sometimes write ``$f$ is at least on the order of $g$'' to meant $g = O(f)$. We say that a random variable $X$ stochastically dominates a random variable $Y$ if $\P[X > s] \geq \P[Y > s]$ for all $s \in \mathbb{R}$. The letters $C,C_1$ \textit{etc.} will denote generic constants whose value may change from one occurence to the next but are independent of the problem size or $n$, the number of partitions.

For a monotonely increasing (but not necessarily injective) function $f: \mathbb{N} \mapsto \mathbb{N}$, 
\be
f^{-1}(x) \equiv \min \{ y \, : \, f(y) \geq x \}.
\ee
 For two distributions $\mu,\nu$ on a finite state space $\Omega$, the $L^{1}$, or total variation, distance between $\mu$ and $\nu$ is given by
\be 
\| \mu - \nu \|_{\TV} = \max_{A \subset \Omega} (\mu(A) - \nu(A)).
\ee 
The \emph{mixing profile} of a Markov chain $\{ X_t \}_{t \in \mathbb{N}}$ on $\Omega$ with stationary measure $\pi$ is defined as  
\be 
\tau(\epsilon) = \min \{ t > 0 \, : \, \max_{X_{0} = x \in \Omega} \| \mathcal{L}(X_{t}) - \pi \|_{\TV} < \epsilon \}
\ee
for all $0 < \epsilon < 1$. As usual, the \textit{mixing time} is defined by $\tmix = \tau(0.25)$. The dependence of a Markov chain $\{X_{t}\}_{t \in \mathbb{N}}$ on the initial conditions $X_0 =x$ is denoted by a subscript, \textit{e.g.}, $\E_x[\cdot] = \E[\cdot| X_0 = x]$.

\subsection{Projections and Restrictions of Markov  Kernels} \label{SubsecProjRest}

Let $\{ X_{t} \}_{t \in \mathbb{N}}$ be an irreducible, reversible and $\frac{1}{2}$-lazy Markov chain with kernel $K$ and stationary distribution $\pi$ on $\Omega$.
Our goal is to bound the mixing time of $\{ X_{t} \}_{t \in \mathbb{N}}$ in terms of the mixing times of various \textit{restricted} and \textit{projected} chains. Fix $n \in \mathbb{N}$ and let $\Omega = \sqcup_{i=1}^{n} \Omega_{i}$ be a partition of $\Omega$ into $n$ disjoint parts. Define the projection function $\mathcal{P}$ on $\Omega$ by
\be \label{IneqProfFunct}
\mathcal{P}(x) = \{ 1 \leq i \leq n \, : \, x \in \Omega_{i} \}.
\ee 
For $1 \leq i \leq n$, set $\eta_{i}(0) = -1$. Then, for $s \in \mathbb{N}$, recursively define the sequence of \textit{hitting} and \textit{occupation} times 
\be [EqDefOccupationMeasureCounters]
\eta_{i}(s) &= \min \{ t > \eta_{i}(s-1) \, : \, X_{t} \in \Omega_{i} \}, \\
\kappa_{i}(s) &= \max \{u \, : \, \eta_{i}(u) \leq s \} = \sum_{u=1}^s 1_{X_u \in \Omega_i}.
\ee 
Both $\eta, \kappa$ depend on the initial condition $X_0$.
We also define the associated \textit{restricted} processes $\{ X_{t}^{(i)} \}_{t \in \mathbb{N}}$ by
\be \label{EqDefRestChain}
X_{t}^{(i)} = X_{\eta_{i}(t)}. 
\ee 
This is also called the \textit{trace} of $\{ X_{t} \}_{t \in \mathbb{N}}$ on $\Omega_{i}$.
Since $\{ X_{t} \}_{t \in \mathbb{N}}$ is recurrent, we have for all $t \in \mathbb{N}$ that $\eta_{i}(t) < \infty$ almost surely, and so $X_{t}^{(i)}$ is almost surely well-defined for all $t \in \mathbb{N}$. The trace $\{ X_{t}^{(i)} \}_{t \in \mathbb{N}}$ is a Markov chain on $\Omega_{i}$, and we denote by $K_{i}$ the associated transition kernel on $\Omega_{i}$. The kernel $K_{i}$   inherits irreducibility, reversibility and $\frac{1}{2}$-laziness from $K$\footnote{ The transition kernel $K_{i}'$ corresponding to the notion of a restricted chain defined in \cite[Section 2]{JSTV04} satisfies $K_{i}'(x,y) \leq K_{i}(x,y)$ for all $x \neq y$. This means that our restricted chain is generally more rapidly mixing (\textit{e.g.}, $K_{i}$ dominates $K_{i}'$ in the sense of \cite{tierney1998note}), and it can be much more rapidly mixing (\textit{e.g.}, when $K$ is ergodic, $K_{i}$ is always ergodic and has a smaller mixing time than $K$, while $K_{i}'$ may not even be ergodic).} and its stationary distribution is given by $\pi_{i}(A) = \frac{ \pi(A)}{\pi(\Omega_{i})}$ for all $A \subset \Omega_{i}$. Let $\varphi_{i}$ be the mixing time of $K_{i}$ and set 
 \be \label{eqn:phimax}
 \varphi_{\max} = \max_{1 \leq i \leq n} \varphi_{i}.
 \ee 

We next define the \textit{projected} kernel $\overline{K}$. The state space of the projected chain is the set $[n]$ and its transition kernel is defined by  
\be \label{EqDefProjChain}
\overline{K}(i,j) = \frac{1}{\pi(\Omega_{i})}\sum_{x \in \Omega_{i}, y \in \Omega_{j}} \pi(x) K(x,y).
\ee

Throughout the paper, we are often interested in hitting times of various sets. We recall that, if $\{ X_{t} \}_{t \geq 0}$ is a Markov chain with state space $\Omega $ and $A \subset \Omega$, the hitting time $\tau_{A} \equiv \min \{ t \geq 0 \, : \, X_{t} \in A \}$ satisfies
\be \label{eqn:subexp}
\max_{x \in \Omega}  \P_x[\tau_{A} > k t] \leq \big( \max_{x \in \Omega}  \P_x[\tau_{A} >  t] \big)^{k}
\ee 
for all $k, t \in \mathbb{N}$. In particular, this implies
\be \label{eqn:subexp2}
\max_{x \in \Omega}  \P_x[ e^{-1} \frac{\tau_{A}}{\max_{y \in \Omega}\E_{y}[\tau_{A}]} > t] \leq e^{-t}.
\ee 
See inequality \eqref{IneqEndOfShortProof} in Appendix A for a short proof of \eqref{eqn:subexp2}. We use this fact throughout the paper, sometimes referring to the `subgeometric tails' of the hitting time distribution.

\subsection{User's Guide} \label{SubsecUser}

We give an overview of our results, with the aim of directing a reader to the most relevant bounds. Briefly, Lemma \ref{LemmaBasicMixing2} gives the strongest bounds in this paper, while Inequality \eqref{EqConclusionWellCoveringSimpleExample} gives mixing time bounds that are easiest to compute in terms of standard estimates.

We now describe several common situations in which our bounds can improve on those in the literature: 

\begin{itemize}
\item \textbf{Situation:} $n$ is small while $\max_{1 \leq i \leq n} \frac{\max_{x \in \Omega_{i}} \sum_{y \notin \Omega_{i}} K(x,y)}{\min_{x \in \Omega_{i}} \sum_{y \notin \Omega_{i}} K(x,y)}$ is large.
\begin{itemize}
\item \textbf{Improvement:} Improves the mixing bound from a large function of $\overline{\varphi}, \varphi_{i}$ (\textit{e.g.} roughly $O( \overline{\varphi} \, \max_{1 \leq i \leq n} \varphi_{i})$) to a smaller function (\textit{e.g.} roughly $O(\max (\overline{\varphi}, \, \max_{1 \leq i \leq n} \varphi_{i}))$), at the cost of a poor dependance on the number of parts $n$ of the partition.
\item \textbf{Relevant Results:} Lemma \ref{LemmaSimpSupBdBad2} gives this improvement in the simplest situations; Lemma \ref{LemmaBasicMixing} gives this improvement for more difficult examples. Results in Section \ref{SecDriftBound} may be necessary if one has useful bounds only on $\max_{a \leq i \leq b} \varphi_{i}$ for some $(a,b) \neq (1,n)$. 
\item \textbf{Worked Examples:} Example \ref{SubsecPinceNez} is the simplest example illustrating this improvement. Example \ref{ExKcipToy} obtains a similar improvement in a more complicated situation, using the bounds in Section \ref{SecDriftBound}. 
\item \textbf{Requires:} Bounds on the mixing times $\varphi_{i}$, the expected escape times for $\Omega_{i}$, and lower bounds on the elements $\overline{K}$.
\end{itemize}
\item \textbf{Situation:} As above, but $n$ is large and $K$ exhibits the following \textit{approximate metastability:} there exists some $1 \leq k \leq n$ so that the \textit{mixing time} $\varphi_{i}$ of $K_{i}$ is small compared to the \textit{escape times} $\min_{x \in \Omega_{i}} \E[\min \{ t \, : \, X_{t} \in \cup_{j \in [k] \backslash \{i \}} \Omega_{j} \} \, | \, X_{0} = x\}]$ for all $1 \leq i \leq k$.
\begin{itemize}
\item \textbf{Improvement:} As above.
\item \textbf{Relevant Results:}  The main result is Theorem \ref{ThmContCondHighDim}; as above, results in Section \ref{SecDriftBound} could be helpful.
\item \textbf{Worked Examples:} Example \ref{RemContCondTrace2}.
\item \textbf{Requires:} Bounds on the mixing times $\varphi_{i}$, the expected escape times for $\Omega_{i}$, and a contraction or mixing condition for a projected chain similar to $\overline{K}$.
\end{itemize}
\item \textbf{Situation:} A bound on the mixing time is needed, but $\min_{x} \pi(x)$ is very small.
\begin{itemize}
\item \textbf{Improvement:} Recall that the mixing time $\varphi$ and relaxation time $\varphi^{\mathrm{rel}}$ of a Markov chain satisfy
\be 
\varphi^{\mathrm{rel}} \leq \varphi = O(-\varphi^{\mathrm{rel}} \, \log(\min_{x} \pi(x)));
\ee 
both inequalities are sharp. Thus, bounding the mixing time of a Markov chain directly can give an improvement of a factor of $-\log(\min_{x} \pi(x))$ over a bound that passes through the relaxation time. This factor can be arbitrarily large, and is particularly important for limiting arguments.
\item \textbf{Relevant Results:} All results in this paper can deliver this type of improvement. If the very small value of $\min_{x} \pi(x)$ is the main obstacle to using an existing decomposition bound, we recommend beginning with Section \ref{SecNaiveBoot}, and in particular the relatively weak but simple Inequality \eqref{EqConclusionWellCoveringSimpleExample}. We note that many bounds in Section \ref{SecNaiveBoot} are stated in terms of a somewhat complicated quantity we call the \textit{well-covering number}. As discussed in the introduction to that section, a collection of comparison inequalities allow these bounds to be used without the explicit computation of new well-covering numbers.
\item \textbf{Worked Examples:} Inequality \eqref{EqConclusionWellCoveringSimpleExample}; can be applied to other examples (\textit{e.g.} Section \ref{SubsecPinceNez}).
\item \textbf{Requires:} Inequality \eqref{EqConclusionWellCoveringSimpleExample} requires estimates of the mixing times $\varphi_{i}$ for some values of $i$ as well as entry-wise lower bounds on $\overline{K}$. 
\end{itemize}
\end{itemize}

\section{Main Result: General Mixing Bounds for Decomposable Markov Chains} \label{SecGenMix}
%\subsection{Main Result: Simple Mixing Bound} \label{SecBasicMixing}

Let $\{X_{t} \}_{t \in \mathbb{N}}$ be an irreducible, ${1 \over 2}$-lazy, reversible Markov chain on a finite state space $\Omega$ started at $X_{0} = z$. For $A \subset \Omega$, define 
\be \label{eqn:tauA} 
\tau_{A} = \min \{ t \, : \, X_{t} \in A \} 
\ee
to be the first hitting time of $A$. Theorem 1.1 of \cite{PeSo13} yields that, for $0 < \alpha < \frac{1}{2}$, there exist universal constants $c_{\alpha}$ and $c_{\alpha}'$ such that \footnote{The version of Theorem 1.1 in \cite{PeSo13} and their constants $c_\alpha, c'_\alpha$ are slightly different from ours. See Appendix A for a proof of Equation \eqref{eqn:PS13eqn}.}
\be \label{eqn:PS13eqn}
c'_\alpha \max_{z,A: \pi(A) \geq \alpha} \E_z(\tau_A) \leq \tmix \leq c_\alpha \max_{z,A: \pi(A) \geq \alpha} \E_z(\tau_A).
\ee
Thus \eqref{eqn:PS13eqn} relates mixing times to hitting times of large sets. It is not hard to show $ \tmix \geq c'_\alpha \max_{z,A: \pi(A) \geq \alpha} \E_x(\tau_A)$. The key inequality is the upper bound in \eqref{eqn:PS13eqn}. As discussed in \cite{PeSo13}, this upper bound does not hold for non-reversible chains. For the rest of the paper, $c_\alpha$ and  $c'_\alpha$ will refer to the constants in Equation \eqref{eqn:PS13eqn}.

For $A \subset \Omega$, let $A_{i} = A \cap \Omega_{i}$. Following \eqref{eqn:tauA}, let $\tau_{A_{i}}^{(i)}$ be the first hitting time of $A_{i}$ for the trace $\{ X_{t}^{(i)} \}_{t \in \mathbb{N}}$.  
 The following simple bound on the mixing time of $K$, based on \eqref{eqn:PS13eqn}, forms the basis of our approach:

\begin{lemma} [Basic Mixing Bound] \label{LemmaBasicMixing}
Fix $0 < \alpha < \frac{1}{2}$ and $1 - \alpha < \beta < 1$. Let $0 < \gamma <\min \big({1 \over 2}, \frac{\alpha + \beta - 1}{\beta} \big)$ and fix some $I \subset \{1, 2, \ldots, n \}$ that satisfies
\be 
\sum_{i \in I} \pi(\Omega_{i}) > \beta.
\ee 
Define
\be \label{eqn:Taulem}
 \mathcal{T} \equiv \min \Big\{ T \, : \, \min_{0 < t < T}\max_{i \in I }  \Big( \frac{ \varphi_i }{c'_\gamma t } + \max_{z \in \Omega} \P_{z}[\kappa_{i}(T) < t] \Big) < \frac{1}{4} \Big\}.
\ee 
Then the mixing time $\tmix$ of $\{ X_{t} \}_{t \in \mathbb{N}}$ satisfies
\be 
\tmix \leq \frac{4}{3} c_\alpha \mathcal{T}.
\ee 
\end{lemma}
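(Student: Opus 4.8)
The plan is to bound the mixing time of $K$ using the hitting-time characterization \eqref{eqn:PS13eqn}, applied with parameter $\alpha$. So I need to show that for every $z \in \Omega$ and every $A \subset \Omega$ with $\pi(A) \geq \alpha$, one has $\E_z(\tau_A) \leq \mathcal{T}$ (after which multiplying by $c_\alpha$ and absorbing the factor $\frac{4}{3}$ closes the argument; the extra $\frac{4}{3}$ will come from converting the ``probability $< \frac14$'' statement in the definition of $\mathcal{T}$ into a bound on the expectation via a geometric-trials argument). First I would observe that since $\sum_{i \in I} \pi(\Omega_i) > \beta$ and $\pi(A) \geq \alpha$ with $\alpha + \beta > 1$, the set $A$ must have substantial mass inside $\sqcup_{i \in I}\Omega_i$; more precisely $\sum_{i \in I}\pi(A_i) \geq \alpha + \beta - 1$. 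Then by an averaging argument there exists at least one index $i \in I$ for which $A_i$ is ``large'' relative to $\Omega_i$, namely $\pi_i(A_i) = \pi(A_i)/\pi(\Omega_i) \geq \frac{\alpha+\beta-1}{\beta} > \gamma$ — this is exactly why $\gamma$ was chosen below $\frac{\alpha+\beta-1}{\beta}$. Actually, to hit $A$ it suffices to hit $A_i$ for any single such $i$, so I would fix one such index $i = i(A) \in I$ with $\pi_i(A_i) \geq \gamma$.

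Next, the key reduction: the walk $\{X_t\}$ hits $A_i$ no later than the time at which the trace chain $\{X^{(i)}_t\}$ hits $A_i$, run for enough trace-steps. Concretely, $\tau_{A_i} \leq \eta_i(\tau^{(i)}_{A_i})$, and on the event $\{\kappa_i(T) \geq t\}$ the original chain has, by time $T$, simulated at least $t$ steps of the trace chain; so $\{\tau^{(i)}_{A_i} < t\} \cap \{\kappa_i(T) \geq t\} \subseteq \{\tau_{A_i} \leq T\}$. Now I would estimate $\P_z[\tau_{A_i} > T]$ by a union bound over these two failure modes:
\[
\P_z[\tau_{A_i} > T] \leq \max_{w \in \Omega_i}\P_w[\tau^{(i)}_{A_i} \geq t] + \max_{z \in \Omega}\P_z[\kappa_i(T) < t].
\]
For the first term I use the trace chain's mixing: since $K_i$ is reversible and $\pi_i(A_i) \geq \gamma$, the lower bound half of \eqref{eqn:PS13eqn} applied to $K_i$ (with parameter $\gamma$) gives $\max_w \E_w[\tau^{(i)}_{A_i}] \leq \varphi_i / c'_\gamma$, and then Markov's inequality yields $\max_w \P_w[\tau^{(i)}_{A_i} \geq t] \leq \frac{\varphi_i}{c'_\gamma t}$. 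Combining, for any $0 < t < T$,
\[
\max_z \P_z[\tau_A > T] \leq \max_z\P_z[\tau_{A_i} > T] \leq \max_{i \in I}\Big(\frac{\varphi_i}{c'_\gamma t} + \max_z \P_z[\kappa_i(T) < t]\Big),
\]
and by the definition of $\mathcal{T}$ this is $< \frac14$ once $T = \mathcal{T}$ (optimizing over $t$).

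Finally I would upgrade the tail bound $\max_z \P_z[\tau_A > \mathcal{T}] < \frac14$ to the expectation bound. Using the submultiplicativity \eqref{eqn:subexp} (restarting the chain every $\mathcal{T}$ steps from wherever it is, each block independently has failure probability $< \frac14$), we get $\max_z \P_z[\tau_A > k\mathcal{T}] < (1/4)^k$, hence $\max_z \E_z[\tau_A] \leq \mathcal{T}\sum_{k \geq 0}(1/4)^k = \frac43 \mathcal{T}$. Plugging this into \eqref{eqn:PS13eqn} gives $\tmix \leq c_\alpha \cdot \frac43 \mathcal{T}$, as claimed. The main obstacle — or at least the step requiring the most care — is the coupling of the hitting time of $A_i$ for the original chain with the hitting time for the trace chain together with the occupation-count $\kappa_i$: one must be careful that $\{X^{(i)}_t\}$ is genuinely the trace of $\{X_t\}$ (so the first $\kappa_i(T)$ trace-steps are exactly determined by the original chain up to time $T$), that the supremum over starting states in the trace-chain bound is taken over $\Omega_i$ while the supremum in the $\kappa_i$ bound is over all of $\Omega$, and that the strong Markov property is used correctly when restarting in the final geometric-trials step. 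Everything else is bookkeeping with the constants $\alpha, \beta, \gamma$ to ensure $\pi_i(A_i) > \gamma$ for some $i \in I$.
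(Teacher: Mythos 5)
Your proposal is correct and follows essentially the same route as the paper's own proof: locate an index $i \in I$ with $\pi_i(A_i) > \gamma$ (the paper argues by contradiction, you by weighted averaging; note that to get your stated constant you should keep $S=\sum_{i \in I}\pi(\Omega_i)$ in both numerator and denominator, giving $\max_{i\in I}\pi_i(A_i) \geq 1 - \frac{1-\alpha}{S} \geq \frac{\alpha+\beta-1}{\beta} > \gamma$, since the naive two-step bound with $\sum_{i\in I}\pi(A_i)\geq\alpha+\beta-1$ alone loses a factor when $S>\beta$), then compare $\tau_A$ with the trace-chain hitting time through the occupation count $\kappa_i(T)$, bound the trace hitting time by $\varphi_i/c_{\gamma}'$ via the easy direction of \eqref{eqn:PS13eqn} plus Markov's inequality, and convert the resulting tail bound at $T=\mathcal{T}$ into $\max_z \E_z[\tau_A] \leq \frac{4}{3}\mathcal{T}$ using \eqref{eqn:subexp} before applying the hard direction of \eqref{eqn:PS13eqn}. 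This matches the paper's argument step for step, so no further changes are needed beyond that minor tightening of the averaging step.
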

\begin{proof}
Fix a set $A \subset \Omega$ with $\pi(A) \geq \alpha$. We will denote $A_i = A \cap \Omega_i$ for all $1 \leq i \leq n$. We claim that there exists $j \in I$ so that 
\be \label{IneqBigToSmallSets}
\pi_{j}(A_j) \geq \gamma > 0.
\ee 
To see this, assume that inequality \eqref{IneqBigToSmallSets} is not satisfied for any $i \in I$. Then we would have 
\be 
\pi(A) &= \sum_{i \notin I} \pi(A_i) + \sum_{i \in I} \pi(A_i) \\
&< (1 - \sum_{i \in I} \pi(\Omega_{i})) + \gamma \sum_{i \in I} \pi(\Omega_{i})\\
&= 1 - (1 - \gamma)  \sum_{i \in I} \pi(\Omega_{i})\\
&\leq 1 - (1 - \gamma)\beta \leq \alpha,
\ee  
contradicting the assumption that $\pi(A) \geq \alpha$. Thus, inequality \eqref{IneqBigToSmallSets} is satisfied. \par 
Let $j \in I$ be an index satisfying $\pi_{j}(A_j) \geq \gamma$. For any $T \in \mathbb{N}$ and  $0 < t < T$,  we have 
\be
\{\tau_A < T\} \supset \{\tau_{A_j} < T\} = \{\tau_{A_j}^{(j)} < \kappa_j(T)\} \supset \{\tau^{(j)}_{A_j} < t\} \cap \{\kappa_j(T) > t\}.
\ee
This gives
\be 
\P_{z}[\tau_{A} > T] &\leq \P_z[\tau_{A_{j}} > \kappa_{j}(T)] \\
&\leq \max_{y \in \Omega_{j}} \P_{y}[\tau_{A_{j}}^{(j)} > t] +  \P_{z}[\kappa_{j}(T) < t ] \\
&\leq \max_{y \in \Omega_{j}} \frac{\E_{y}[\tau_{A_{j}}^{(j)}]}{t} + \P_{z}[\kappa_{j}(T) < t ] \\ 
&\leq  \frac{\varphi_j}{ c_{\gamma}' t} + \P_{z}[\kappa_{j}(T) < t ], 
\ee  
where the last inequality follows from Equation \eqref{eqn:PS13eqn}. Thus, for $T \geq \mathcal{T}$,  
\be 
\max_{z \in \Omega} \P_{z}[\tau_{A} > T] \leq \frac{1}{4}.
\ee 
Since this holds for all $z \in \Omega$ and all $A$ with $\pi(A) > \alpha$, by Equation \eqref{eqn:subexp} we have 
\be
\max_{z \in \Omega, \, \pi(A) \geq \alpha} \E_{z}[\tau_{A}] \leq \frac{4}{3} \mathcal{T}.
\ee 
By Equation \eqref{eqn:PS13eqn}, 
\be 
\tmix \leq c_\alpha \max_{z \in \Omega, \, \pi(A) \geq \alpha} \E_{z}[\tau_{A}] \leq  \frac{4}{3} c_{\alpha} \mathcal{T}, 
\ee  
completing the proof.
\end{proof}

Our next result weakens the requirement in Lemma \ref{LemmaBasicMixing} that $\max_{z \in \Omega} \P_{z}[\kappa_{i}(T) < t]$ must be small for all $i \in I$ to the requirement that only $\max_{z \in \Omega} \P_{z}[\cap_{i \in I} \{\kappa_{i}(T) < t\}]$ needs to be small, at the cost of requiring the bound be uniform over all $I \subset [n]$ with $\pi(\cup_{i \in I} \Omega_{i})$ sufficiently large. Define
\be 
 \mathcal{T}' = \min \Big\{ T \, : \, \min_{0 < t < T} \Big(\max_{I \subset [n] : \pi(\cup_{i \in I} \Omega_{i}) \geq \frac{\alpha}{2} } \big( \max_{z \in \Omega} \P_{z}[\cap_{i \in I} \{\kappa_{i}(T) < t\}] + \sum_{i \in I} e^{- \lfloor \frac{ c_{\frac{\alpha}{2}}' \, t }{e \, \varphi_{i}} \rfloor} \big) \Big) < \frac{1}{4} \Big\}. \\
 \label{eqn:taup}
\ee

In the proof of the following lemma, we bound the distribution of the number of steps in an excursion from a given set $A$ by the distribution of the hitting time to $A$ from the worst possible starting point in $\Omega$:

\begin{lemma} \label{LemmaBasicMixing2}
Fix $0 < \alpha < \frac{1}{2}$ and let $\mathcal{T}'$ be as in \eqref{eqn:taup}. Then the mixing time $\tmix$ of $\{ X_{t} \}_{t \in \mathbb{N}}$ satisfies
\be 
\tmix \leq   \frac{4}{3} c_\alpha \mathcal{T}'.
\ee 
\end{lemma}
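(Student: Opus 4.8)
The plan is to mimic the proof of Lemma \ref{LemmaBasicMixing}, but to be cleverer about how we handle the excursion lengths so that we only need control on the \emph{joint} event $\cap_{i \in I}\{\kappa_i(T) < t\}$ rather than on each $\{\kappa_i(T) < t\}$ separately. As in the proof of Lemma \ref{LemmaBasicMixing}, fix $A \subset \Omega$ with $\pi(A) \geq \alpha$ and write $A_i = A \cap \Omega_i$. Let $I = \{ i : \pi_i(A_i) > 0 \}$ (or more carefully, the set of $i$ for which $\pi_i(A_i)$ is not too small — see below); since $\pi(A) = \sum_i \pi_i(A_i)\pi(\Omega_i) \geq \alpha$, a counting argument like the one in Lemma \ref{LemmaBasicMixing} shows that $\pi(\cup_{i \in I} \Omega_i) \geq \frac{\alpha}{2}$, so that $I$ is one of the index sets quantified over in the definition \eqref{eqn:taup} of $\mathcal{T}'$. (The threshold $\frac{\alpha}{2}$ and the factor $c'_{\alpha/2}$ appearing in \eqref{eqn:taup} arise from taking the cutoff for "$\pi_i(A_i)$ not too small" to be roughly $\frac{\alpha}{2}$.)

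**The key step: bounding excursion lengths by worst-case hitting times.** The main new idea, flagged in the sentence preceding the lemma, is the following. For each $i \in I$, we want to say that if the chain has visited $\Omega_i$ at least $t$ times by time $T$ (i.e. $\kappa_i(T) \geq t$), then with good probability the trace chain $X^{(i)}$ has already hit $A_i$. On the event $\{\kappa_i(T) \geq t\}$ we have, just as before, $\{\tau_A < T\} \supset \{\tau^{(i)}_{A_i} < t\}$, so $\P_z[\tau_A > T,\ \kappa_i(T) \geq t] \leq \max_{y \in \Omega_i}\P_y[\tau^{(i)}_{A_i} \geq t]$. Now I would apply the subgeometric tail bound \eqref{eqn:subexp}–\eqref{eqn:subexp2} to the trace chain: since $\pi_i(A_i) \gtrsim \frac{\alpha}{2}$, Equation \eqref{eqn:PS13eqn} applied to $K_i$ gives $\max_{y \in \Omega_i}\E_y[\tau^{(i)}_{A_i}] \leq \varphi_i / c'_{\alpha/2}$, and hence by \eqref{eqn:subexp} the tail decays geometrically with rate governed by $e\,\varphi_i / c'_{\alpha/2}$, yielding the term $e^{-\lfloor c'_{\alpha/2} t / (e\,\varphi_i)\rfloor}$ in \eqref{eqn:taup}. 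Then I would union-bound over $i \in I$:
\be
\P_z[\tau_A > T] \leq \P_z\big[\cap_{i \in I}\{\kappa_i(T) < t\}\big] + \sum_{i \in I}\P_z[\tau_A > T,\ \kappa_i(T) \geq t] \leq \P_z\big[\cap_{i \in I}\{\kappa_i(T) < t\}\big] + \sum_{i \in I} e^{-\lfloor c'_{\alpha/2} t/(e\,\varphi_i)\rfloor}.
\ee
For $T \geq \mathcal{T}'$ this is at most $\frac{1}{4}$ for a suitable choice of $t$, uniformly in $z$ and in $A$.

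**Finishing.** Once $\max_{z}\P_z[\tau_A > T] \leq \frac14$ for all $T \geq \mathcal{T}'$ and all $A$ with $\pi(A) \geq \alpha$, the subgeometric tail inequality \eqref{eqn:subexp} gives $\max_{z,\,\pi(A)\geq\alpha}\E_z[\tau_A] \leq \frac{4}{3}\mathcal{T}'$, exactly as in Lemma \ref{LemmaBasicMixing}, and then the upper bound in \eqref{eqn:PS13eqn} gives $\tmix \leq c_\alpha \cdot \frac{4}{3}\mathcal{T}'$, as claimed.

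**Where the difficulty lies.** The routine parts are the counting argument (identifying $I$ with $\pi(\cup_{i\in I}\Omega_i) \geq \frac{\alpha}{2}$) and the final passage through \eqref{eqn:subexp} and \eqref{eqn:PS13eqn}, which are copied essentially verbatim from Lemma \ref{LemmaBasicMixing}. The one genuinely delicate point is the probabilistic estimate in the key step: one must be careful that the excursion-length / trace-hitting-time comparison $\P_z[\tau_A > T,\ \kappa_i(T) \geq t] \leq \max_{y\in\Omega_i}\P_y[\tau^{(i)}_{A_i} \geq t]$ is valid — i.e. that conditioning on $\kappa_i(T) \geq t$ and on the position of the chain at the relevant visit times to $\Omega_i$ does not spoil the Markov/strong-Markov structure of the trace chain — and that the geometric tail bound from \eqref{eqn:subexp2} is applied to $K_i$ with the correct constant. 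Getting the bookkeeping on $\alpha$ versus $\alpha/2$ and on the floor functions exactly right so that it matches the stated definition \eqref{eqn:taup} is the only real obstacle; the conceptual content is a short union bound over $i \in I$.
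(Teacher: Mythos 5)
Your proposal is correct and follows essentially the same route as the paper's proof: the same thresholded index set (the paper's $J_{A} = \{i : \pi(\Omega_i \cap A)/\pi(\Omega_i) \geq \alpha/2\}$, shown by the same counting argument to satisfy $\pi(\cup_{i \in J_A}\Omega_i) \geq \alpha/2$), the same decomposition of $\{\tau_A > T\}$ into the joint event $\cap_{i}\{\kappa_i(T) < t\}$ plus a union bound over the trace hitting-time tails, controlled via \eqref{eqn:subexp2} and \eqref{eqn:PS13eqn} applied to $K_i$ with constant $c'_{\alpha/2}$, and the same conclusion via \eqref{eqn:subexp} and \eqref{eqn:PS13eqn}. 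Your rewriting of the split as "either all $\kappa_i(T) < t$, or some $\kappa_i(T) \geq t$ forces $\tau^{(i)}_{A_i} > t$" is just a rephrasing of the paper's containment $\cap_{i}(\{\tau^{(i)}_{A_i} > t\} \cup \{\kappa_i(T) < t\})$, so there is no substantive difference.
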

\begin{proof}
The proof is similar to that of Lemma \ref{LemmaBasicMixing}.
Fix a set $A \subset \Omega$ with $\pi(A) \geq \alpha$. 
Define the set  
\be 
J_{A} = \{i \in [n] \, : \,\frac{\pi(\Omega_{i} \cap A)}{\pi(\Omega_{i})} \geq \frac{\alpha}{2} \}.
\ee 
We claim that
\be \label{IneqSimpLemmCoverage}
\pi(\cup_{i \in J_{A}} \Omega_{i}) \geq \frac{\alpha}{2 - \alpha} \geq \frac{\alpha}{2}.
\ee \label{IneqSimpLemmGoodInt2}

To see this, assume that $\pi(\cup_{i \in J_{A}} \Omega_{i}) = p < {\alpha \over 2 - \alpha}$. Then
\be 
\alpha &\leq \pi(A) \\
&= \sum_{i} \pi(\Omega_{i} \cap A) \\
&= \sum_{i \in J_{A}} \pi(\Omega_{i}) \frac{\pi(\Omega_{i} \cap A)}{\pi(\Omega_{i})} + \sum_{i \notin J_{A}} \pi(\Omega_{i}) \frac{\pi(\Omega_{i} \cap A)}{\pi(\Omega_{i})} \\
&< \sum_{i \in J_{A}} \pi(\Omega_{i})  + \sum_{i \notin J_{A}} \pi(\Omega_{i}) \frac{\alpha}{2} \\
&= p + \frac{\alpha}{2} \big(1 -p \big) < \alpha, \label{eqn:pa}
\ee 
yielding a contradiction. The final inequality of \eqref{eqn:pa} follows from the fact that 
$ p + \frac{\alpha}{2} \big(1 -p \big) <\alpha$ for $p < {\alpha \over 2-\alpha}$.
Thus \eqref{IneqSimpLemmGoodInt2} holds. \par
 The goal is to bound $\tau_{A}$ by $\{ \tau_{A_{i}}^{(i)} \}_{i \in J_{A}}$. For all $T \in \mathbb{N}$ and $z \in \Omega$, 
\be 
\P_{z}[\tau_{A} > T] &= \P_{z}\big[\cap_{i \in J_{A}} \{ \tau_{A_{i}}^{(i)} > \kappa_{i}(T)\}\big] \\
&\leq \min_{0 \leq t \leq T} \P_{z} \big[\cap_{i \in J_{A}} ( \{ \tau_{A_{i}}^{(i)} > t \} \cup \{ \kappa_{i}(T) < t \} )\big]  \\
&\leq \min_{0 \leq t \leq T} (\max_{x \in \Omega} \P_{x}\big[\cap_{i \in J_{A}}\{\kappa_{i}(T) < t\}\big] +  \P_{x}[ \cup_{i \in J_{A}} \{\tau_{A_{i}}^{(i)} > t\}]) \\
&\leq \min_{0 \leq t \leq T} (\max_{x \in \Omega}\P_{x}\big[\cap_{i \in J_{A}}\{\kappa_{i}(T) < t\}\big]+ \sum_{i \in J_{A}} e^{- \big\lfloor \frac{t}{e \, \max_{y \in \Omega_{i}} \E_{y} [\tau_{A_{i}}^{(i)}]} \big\rfloor}  ) \\
&\leq \min_{0 \leq t \leq T} (\max_{x \in \Omega} \P_{x}\big[\cap_{i \in J_{A}}\{\kappa_{i}(T) < t\}\big]+ \sum_{i \in J_{A}} e^{- \lfloor \frac{ c_{\frac{\alpha}{2}}'\, t   }{ e \,  \varphi_{i}} \rfloor}),
\ee 
where the penultimate inequality follows from inequality \eqref{eqn:subexp2} and the last inequality follows from inequality \eqref{eqn:PS13eqn}. Since $\pi(\cup_{i \in J_{A}} \Omega_{i}) \geq \frac{\alpha}{2}$, for $T > \mathcal{T}'$, we have 
\be 
\max_{z \in \Omega} \P_{z}[\tau_{A} > T] \leq \frac{1}{4}.
\ee 
Since this holds for  all $A$ with $\pi(A) > \alpha$, we have by equation \eqref{eqn:subexp} 
\be
\max_{z \in \Omega, \, \pi(A) \geq \alpha} \E_{z}[\tau_{A}] \leq \frac{4}{3} \mathcal{T}.
\ee 
By Equation \eqref{eqn:PS13eqn}, 
\be 
\tmix \leq c_\alpha \max_{z \in \Omega, \, \pi(A) \geq \alpha} \E_{z}[\tau_{A}] \leq   \frac{4}{3} c_{\alpha} \mathcal{T}', 
\ee  
completing the proof.
\end{proof}

The following example shows that Lemma \ref{LemmaBasicMixing2} can be much stronger than Lemma \ref{LemmaBasicMixing}:

\begin{example} \label{ExLemmaCompStatement}
Fix $3 \leq d \in \mathbb{N}$, let $\{ G_{m} \}_{m \geq d+1}$ be a sequence of $d$-regular expander graphs with $| G_{m} | = m$ (see, \textit{e.g.,} Theorem 4.16 of \cite{hoory2006expander} for proof that such a sequence exists). Let $Q_{m}$ be the kernel associated with $\frac{3}{4}$-lazy simple random walk on $G_{m}$. By Theorem 3.2 of \cite{hoory2006expander}, the mixing time of $Q_{m}$ is $O(\log(m))$. Fix a sequence $\epsilon = \epsilon_{m} < \min(\frac{1}{4},\frac{1}{\log(m)})$, let $H_{m} = \{ (i,v) \, : \, i \in \{1,2\}, v \in G_{m} \}$ and define the kernel $K_{m}$ by setting
\be 
K_{m}( (1,u), (1,v) ) = Q_{m}(u,v), \, \, K_{m}((1,u),(2,u)) = \frac{1}{2}, \, \, K_{m}((2,u),(1,u)) = \epsilon,
\ee 
setting $K_{m}(x,y) = 0$ for all $y \neq x$ not of the form listed above, and finally setting $K_{m}(x,x) = 1 - \sum_{y \neq x} K_{m}(x,y) \geq \frac{1}{4}$. Let $\pi$ denote the stationary measure of $K_m$ and $\tmix$ denote its mixing time.

For each $m \in \mathbb{N}$, we consider the partition $\Omega = \sqcup_{u \in G_{m}} \Omega_{u}$ of $\Omega = H_{m}$ into the $m$ sets $\Omega_{u} = \{ (1,u), (2,u) \}$. We will show that Lemma \ref{LemmaBasicMixing} cannot obtain any bound on the mixing time stronger than $\tmix = O(m)$, while Lemma \ref{LemmaBasicMixing2} can be used to obtain the correct bound of $\tmix = O(\epsilon^{-1} \log(m))$. When $\log(m) \leq \epsilon^{-1} \ll \frac{m}{\log(m)}$, this is a substantial difference. 

By symmetry, $\pi(\Omega_u) = {1 \over m}$ for all $u \in G_m$. Fix some $\alpha = \frac{1}{3}$. Then the set $I$ used in the statement of Lemma \ref{LemmaBasicMixing} must be of size at least $\frac{m}{6}$. Since the restriction of $K_{m}$ to $\Omega_{u}$ has only two points, the mixing time $\varphi_{u}$ can be computed explicitly; it is  $\Theta(1)$. Lemma \ref{LemmaBasicMixing} requires that for some $t< \mathcal{T}$ (see Equation \eqref{eqn:Taulem}), both $\frac{\varphi_{\max}}{ c_{\frac{\alpha}{2}}' \, t} < {1\over 4}$ and
$\P[\kappa_{i}(\mathcal{T}) < t] < {1 \over 4}$ for all $i \in I$. If 
$\P[\kappa_{i}(\mathcal{T}) < t] < {1\over 4}$ holds for all $i \in I$, the pigeonhole principle implies that we must have $\mathcal{T} \geq C |I| \,t \geq C' m t$ for some universal constants $C,C' >0$. Thus, applying Lemma \ref{LemmaBasicMixing} cannot yield a better upper bound than $\tmix = O( \mathcal{T})$ where $m = O(\mathcal{T})$.\par
We briefly sketch an argument showing that is possible to obtain a much better upper bound via Lemma \ref{LemmaBasicMixing2}. Again, set $\alpha = \frac{1}{3}$ and fix $I \subset G_{m}$ with $| I | \geq \frac{\alpha}{2} m = {m \over 6}$. Let $\{X_{t} \}_{t \in \mathbb{N}}$ be a walk evolving on $\Omega$ according to $K_{m}$, define $\Omega_{\mathrm{lower}} = \{ (1,u) \, : \, u \in G_{m} \}$. Let $\{Y_{t} \}_{t \in \mathbb{N}}$ be the trace of $\{ X_{t} \}_{t \in \mathbb{N}}$ on $\Omega_{\mathrm{lower}}$. Identifying elements of $\Omega_{\mathrm{lower}}$ with points of $G_{m}$ by the map $(1,u) \mapsto u$, it can be verified that $\{Y_{t} \}_{t \in \mathbb{N}}$ is a Markov chain with transition kernel $Q_{m}$. Since the mixing time of $Q_{m}$ is $O(\log(m))$, Equation \eqref{eqn:PS13eqn} implies that the expected hitting time for $\{ Y_{t} \}_{t \geq 0}$ of any subset $I \subset G_{m}$ of size $|I| \geq \frac{m}{6}$ is $O(\log(m))$, uniformly in the starting vertex $Y_{0}$.  Let 
\be
\tau_{I}' = \min \{t > 0 \, : \, Y_{t} \in \cup_{u \in I} \Omega_{u} \}.
\ee
 As noted earlier, the mixing time of $Q_m$ is $O(\log(m))$. Thus by \eqref{eqn:PS13eqn}, there exists some constant $0 < C< \infty$ that does not depend on $Y_{0}=y$, $m$ or the particular set $I$  so that 
\be \label{IneqCompExampHittingPrime}
\E_{y}[\tau_{I}'] \leq C \log(m).
\ee  Next, let 
\be
 \tau_{I} = \min \{ t > 0 \, : \, X_{t}  \in \cup_{u \in I} \Omega_{u} \cap \Omega_{\mathrm{lower}} \}.
\ee
Set $\tilde{\eta}(0) = -1$, inductively define $\tilde{\eta}(s+1) = \min \{t > \tilde{\eta}(s) \, : \, X_{t} \in \Omega_{\mathrm{lower}} \}$, and set $\tilde{\kappa}(s) = \max \{ u \, : \, \tilde{\eta}(u) \leq s \}$. We then have 
\be \label{ExLemmCompSimp}
\tau_{I}' = \tilde{\kappa}(\tau_{I}).
\ee 
Next, $\{ \tilde{\eta}(s+1) - \tilde{\eta}(s)\}_{s \in \mathbb{N}}$ is an i.i.d. sequence with mean $O(\epsilon^{-1})$. Thus, there exists a constant $C_1>0$ so that
\be \label{ExLemmCompExMark2}
\max_{x \in \Omega} \P_{x}[\tilde{\kappa}(C_{1} A_{1} \log(m) \epsilon^{-1}) <  A_{1} \log(m)] \leq {1 \over 8}
\ee  
for all $A_{1} > 0$. Combining inequalities \eqref{IneqCompExampHittingPrime}, \eqref{ExLemmCompSimp} and \eqref{ExLemmCompExMark2} gives 
\be \label{ExLemmCompExMarkConc}
\tau_{I}  = O( \log(m) \epsilon^{-1}),
\ee 
uniformly in $X_{0} = x \in \Omega$.

Using again the observation that $\tilde{\eta}(s+1) - \tilde{\eta}(s)$ is a sequence of i.i.d. random variables, each a sum of geometric random variables and with mean that is $O(\epsilon^{-1})$, we have for all $A_{2} > 0$, 
\be \label{IneqRegExInferHa}
\min_{x \in \Omega} \P_{x}[ \{X_{t}\}_{t = \tau_{I} + 1}^{\tau_{I} + A_{2} \log(m)} \subset \Omega_{\mathrm{lower}}^{c}] \geq C (1-\epsilon)^{C' A_{2} \log(m)} \geq C'' e^{-A_{2}} 
\ee 
for some $0 < C, C', C'' < \infty$ that do not depend on $\epsilon$ or $m$.
Recall that $\kappa_i(\cdot)$ denotes on the occupation time of $X_t$ on $\Omega_i$.
Since $\varphi_{\max} = \Theta(1)$, combining inequalities \eqref{IneqRegExInferHa} and \eqref{ExLemmCompExMarkConc}, we infer that there exists a  constant $C_2$ such that 
\be  \label{eqn:supexlem2all}
\max_{z \in \Omega} \P_{z}[\cap_{i \in I}\{\kappa_{i}(C_{2} \log(m) \epsilon^{-1}) < \lceil \frac{8}{c_{\frac{1}{6}}'}\varphi_{\max} \log(m) \rceil \}] <   {1 \over 8}.
\ee 
By choosing $t =  \lceil \frac{8}{c_{\frac{1}{6}}'}\varphi_{\max} \log(m) \rceil$ and $T = C_{2} \log(m) \epsilon^{-1}$ with $C_2$ sufficiently large, we obtain
\be 
\max_{I \subset G_{m} : \pi(\cup_{i \in I} \Omega_{i}) \geq \frac{1}{6} } \sum_{i \in I} e^{-\lfloor \frac{c_{\frac{1}{6}}' t}{ \varphi_i} \rfloor} + \max_{z \in \Omega} \P_{z}[\cap_{i \in I} \{ \kappa_{i}(T) < t\}]  < {1 \over 8} + {1 \over 8} = \frac{1}{4}.
\ee
From Equation \eqref{eqn:taup} and Lemma \ref{LemmaBasicMixing2}, it follows that $\tmix = O(\log(m) \epsilon^{-1})$, which indeed is the correct mixing time. For $\epsilon^{-1} \ll m$, this is much better than any bound obtainable by applying Lemma \ref{LemmaBasicMixing}.
\end{example}

\begin{remark}
The partition used in Example \ref{ExLemmaCompStatement} 
was $\Omega_u = \{(1,u),(2,u)\}, u \in G_m$. A more natural partition is
$H_m =  \Omega_{\mathrm{lower}} \cup \Omega_{\mathrm{lower}}^{c}$.  Lemma \ref{LemmaBasicMixing} applied to this partition indeed gives the optimal bound of $O(\log(m) \epsilon^{-1})$ for the mixing time of $K_m$, without the restriction that $\epsilon < \frac{1}{\log(m)}$. However, our main point behind Example \ref{ExLemmaCompStatement} is not about \emph{choosing} partitions, but to illustrate that for a \emph{given} partition, Lemmas \ref{LemmaBasicMixing} and \ref{LemmaBasicMixing2} can give completely different answers. 
\end{remark}

Fix $0 < \alpha < \frac{1}{2}$ and define 
\be \label{EqDefAvgHitTime}
\overline{\varphi}_{\mathrm{hit}} = \max_{I \subset [n] \ : \,  \pi(\cup_{i \in I} \Omega_{i}) \geq \frac{\alpha}{2}} \max_{x \in \Omega} \E_{x}[\tau_{\cup_{i \in I} \Omega_{i}}].
\ee

We give a corollary to Lemma  \ref{LemmaBasicMixing2}.
Fix $i \in [n]$ and $X_{0} = x \in \Omega_{i}$. Define the escape time 
\be \label{eqn:escapetime}
\tEsc[i] = \min \{ t > 0 \, : \, X_{t} \notin \Omega_{i} \}.
\ee

\begin{cor} \label{CorSimpSupBdBad2}
Assume that, for some $\epsilon, \delta > 0$,
\be \label{IneqInitialRegularityCondition}
\min_{i \in [n]} \min_{x \in \Omega_{i}} \P_{x}[\tEsc[i] > \epsilon \varphi_{\max}] \geq \delta.
\ee
Following the notation of Lemma  \ref{LemmaBasicMixing2} and Equation \eqref{EqDefAvgHitTime}, we have
\be 
\tmix = O (  \epsilon^{-1} \delta^{-1} \overline{\varphi}_{\mathrm{hit}} n \log(n)).
\ee 
\end{cor}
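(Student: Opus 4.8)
The plan is to deduce Corollary \ref{CorSimpSupBdBad2} from Lemma \ref{LemmaBasicMixing2} by producing, for each $A$ with $\pi(A) \geq \alpha$, a good choice of the parameters $t$ and $T$ appearing in the definition of $\mathcal{T}'$ in \eqref{eqn:taup}. The two terms to control are $\sum_{i \in I} e^{-\lfloor c_{\alpha/2}' t / (e \varphi_i) \rfloor}$ and $\max_{z} \P_z[\cap_{i \in I} \{ \kappa_i(T) < t \}]$, where $I$ ranges over all subsets with $\pi(\cup_{i \in I}\Omega_i) \geq \alpha/2$. The first term is made small by taking $t$ a sufficiently large multiple of $\varphi_{\max} \log n$: with $t \geq C \varphi_{\max} \log n$ for a suitable constant $C$ depending only on $\alpha$, each summand is at most $n^{-2}$ (say), so the sum over $|I| \leq n$ terms is at most $1/8$. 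So the target is to choose $t = \Theta(\varphi_{\max}\log n)$ and then find the smallest $T$ for which the second term is $\leq 1/8$, uniformly over admissible $I$.

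The main work is bounding $T$. The key observation is that $\cap_{i \in I}\{\kappa_i(T) < t\}$ is the event that, by time $T$, the chain has made fewer than $t$ visits to \emph{every} set $\Omega_i$ with $i \in I$; in particular it has not yet accumulated $t$ visits to any single good set. I would bound this by a renewal-type argument: starting the chain anywhere, use the regularity hypothesis \eqref{IneqInitialRegularityCondition} to show that each time the chain enters some $\Omega_i$ with $i \in I$, it stays there for at least $\epsilon \varphi_{\max}$ steps with probability at least $\delta$, hence in expectation each such entrance contributes $\Omega(\epsilon \delta \varphi_{\max})$ to the occupation count $\kappa_i$ of that particular $\Omega_i$. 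More precisely, define a sequence of successive hitting times of $\cup_{i \in I}\Omega_i$ using $\overline{\varphi}_{\mathrm{hit}}$ from \eqref{EqDefAvgHitTime}: the expected time between consecutive entrances is $O(\overline{\varphi}_{\mathrm{hit}})$ (by \eqref{eqn:PS13eqn} applied to hitting times of sets of mass $\geq \alpha/2$, together with the subgeometric tails \eqref{eqn:subexp2}), and at each entrance the chain dwells in the entered set for $\geq \epsilon\varphi_{\max}$ steps with probability $\geq \delta$. Running for a time $T$ of order $\epsilon^{-1}\delta^{-1}\overline{\varphi}_{\mathrm{hit}} \cdot (t/\varphi_{\max})$, one expects $\Omega(t/( \epsilon\varphi_{\max}) \cdot \delta \cdot \epsilon)= \Omega(t/\varphi_{\max})$ "successful long dwells", but these might be spread across different $\Omega_i$'s, so I also need that enough of them land in a single $\Omega_i$. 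This is where the factor $n$ enters: by pigeonhole, if the chain accumulates $\geq n t$ total occupation in $\cup_{i\in I}\Omega_i$, some individual $\Omega_i$ with $i \in I$ gets $\geq t$ visits. So it suffices to run until the total occupation of $\cup_{i \in I}\Omega_i$ exceeds $nt$ with probability $\geq 7/8$, and a Markov/renewal estimate gives that this happens by time $T = O(\epsilon^{-1}\delta^{-1}\,\overline{\varphi}_{\mathrm{hit}}\, \cdot n t/\varphi_{\max})$.

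Plugging in $t = \Theta(\varphi_{\max}\log n)$ gives $T = O(\epsilon^{-1}\delta^{-1}\overline{\varphi}_{\mathrm{hit}} \, n \log n)$, and since both defining terms of $\mathcal{T}'$ are then $\leq 1/8$, we get $\mathcal{T}' = O(\epsilon^{-1}\delta^{-1}\overline{\varphi}_{\mathrm{hit}} \, n \log n)$; Lemma \ref{LemmaBasicMixing2} then yields $\tmix = O(c_\alpha \mathcal{T}')$, which is the claimed bound. The main obstacle is the probabilistic estimate showing that the total occupation time of $\cup_{i\in I}\Omega_i$ grows linearly in $T$ with overwhelming probability: one has to combine the bounded expected return time to $\cup_{i\in I}\Omega_i$ (via \eqref{eqn:PS13eqn} and \eqref{EqDefAvgHitTime}) with the lower bound \eqref{IneqInitialRegularityCondition} on dwell times, handle the dependence between successive excursions by conditioning on the strong Markov property at the entrance times, and convert an expectation statement into a high-probability statement — most cleanly by splitting $[0,T]$ into $\Theta(nt/(\epsilon\delta\varphi_{\max}))$-length blocks, showing each block independently contributes $\geq$ some occupation with probability bounded below, and applying a concentration or simple union/Markov bound over blocks. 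Care is needed to keep every constant independent of $n$ and of the problem size, so that the $n\log n$ dependence is exactly as stated.
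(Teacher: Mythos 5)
Your proposal is correct and follows essentially the same route as the paper's proof: a renewal decomposition of $[0,T]$ by successive entrances to $\cup_{i\in I}\Omega_i$ (inter-entrance times stochastically dominated via \eqref{eqn:subexp2} with mean $O(\overline{\varphi}_{\mathrm{hit}})$, each entrance yielding an $\epsilon\varphi_{\max}$-long dwell with probability $\geq\delta$ by \eqref{IneqInitialRegularityCondition}), a pigeonhole step converting total occupation $\geq nt$ into $\max_{i\in I}\kappa_i\geq t$, and then Lemma \ref{LemmaBasicMixing2} with $t=\Theta(\varphi_{\max}\log n)$. The only cosmetic difference is that the expected inter-entrance time is $O(\overline{\varphi}_{\mathrm{hit}})$ directly from the definition \eqref{EqDefAvgHitTime}, so your appeal to \eqref{eqn:PS13eqn} there is unnecessary.
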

\begin{proof}
Let $I \subset[n]$, $A = \cup_{i \in I} \Omega_{i}$ satisfy $\pi(A) > \frac{\alpha}{2}$ and let $\kappa_{I}(s)$, $\eta_{I}(s)$ be defined as in Equation \eqref{EqDefOccupationMeasureCounters} with $\Omega_{i}$ replaced by $\cup_{i \in I} \Omega_{i}$. Let $\{ Z_{i} \}_{i \in \mathbb{N}}$ be a sequence of i.i.d. geometric variables with $\P[Z_{1} > 1] = e^{-1}$ and let $\{ Z_{i}' \}_{ i \in \mathbb{N}}$ be a sequence of i.i.d. Bernoulli random variables with $\P[Z_{1}'=1] = 1 - \P[Z_{1}'=0] = \delta$.

We make two observations. The random variable $e \varphi_{\max} \, Z_{j}$ stochastically dominates the return times $(\eta_{I}(j) - \eta_{I}(j-1))$ conditional on $X_{\eta_{I}(j-1)}$. The random variable $Z_{j}'$ is stochastically dominated by the random variable $\textbf{1}_{\mathcal{E}(j)}$ conditional on $X_{\eta_{I}(j)}$, where 
\be
\mathcal{E}(j) = \big \{ \cup_{s= \eta_{I}(j)}^{\eta_{I}(j) + \lceil \epsilon \varphi_{\max} \rceil} \{ X_{s} \} \in \cup_{i \in I} \Omega_{i} \big \}.
\ee
Thus $\mathcal{E}$ denotes the event that the $j$'th visit to  $\cup_{i \in I} \Omega_{i}$ to be of length at least $\epsilon \varphi_{\max}$. These two observations give, for any $0 \leq t \leq T \in \mathbb{N}$ and $S \geq 1$, 
\be \label{IneqCorrRegMainBound}
\P\Big(\max_{i \in I} \kappa_{i}(T + \epsilon \varphi_{\max}) &< \frac{t}{|I|}\Big) \leq \P[\kappa_{I}(T+ \epsilon \varphi_{\max}) < t] \\
&\leq \P[  \sum_{i=1}^{S} (\eta_{I}(j) - \eta_{I}(j-1)) > T] + \P[\sum_{i=1}^{S} \textbf{1}_{\mathcal{E}(j)} \leq \frac{t}{\epsilon \varphi_{\max}}]  \\
&\leq \P[ e \, \overline{\varphi}_{\mathrm{hit}} \sum_{i=1}^{S} Z_{i} > T] + \P[\sum_{i=1}^{S} Z_{i}' \leq \frac{t}{\epsilon \varphi_{\max}}],
\ee  
where the last inequality follows from Equation \eqref{eqn:subexp2} and Equation \eqref{IneqInitialRegularityCondition}.

Fix $0 < C_{1} < \infty$ and set $t = C_{1} \varphi_{\max} n \log(n)$. For any such choice of $C_{1}$, there exists a $0 < C_{2} < \infty$ so that for $S > C_{2} C_{1} \epsilon^{-1} \delta^{-1} n \log(n)$,
\be 
\P[\sum_{i=1}^{S} Z_{i}' \leq \frac{t}{\epsilon \varphi_{\max}}] \leq \frac{1}{16},
\ee 
and for any such choice of $0 < C_{1},C_{2} < \infty$, there exists a $0 < C_{3} < \infty$ so that for $T > C_{1} C_{2} C_{3}  \epsilon^{-1} \delta^{-1} \overline{\varphi}_{\mathrm{hit}} n \log(n)$,
\be 
 \P[ e \, \overline{\varphi}_{\mathrm{hit}} \sum_{i=1}^{S} Z_{i} > T]\leq \frac{1}{16}.
\ee 
Combining these two bounds with inequality \eqref{IneqCorrRegMainBound} gives, for these choices of $t,T$,
\be 
\P[\max_{i \in I} \kappa_{i}(T) < C_{1} \varphi_{\max} \log(n)] \leq \frac{1}{8}.
\ee 

The result now follows from Lemma \ref{LemmaBasicMixing2}. 

\end{proof}

\begin{remark}
Taking $\epsilon^{-1} = \varphi_{\max}$ and $\delta = 1$ in Corollary \ref{CorSimpSupBdBad2} gives 
\be \label{IneqVisSimilar}
\tmix = O ( \overline{\varphi}_{\mathrm{hit}} \varphi_{\max} n \log(n)),
\ee 
which is visually similar to the main decomposition bounds in \cite{JSTV04} and other papers cited in the Introduction.
\end{remark}

The following bound on $ \overline{\varphi}_{\mathrm{hit}}$ gives an easy way to use Corollary \ref{CorSimpSupBdBad2} when $n$ is small:

\begin{lemma} \label{LemmaSimpSupBdBad2}
Follow the notation of Corollary  \ref{CorSimpSupBdBad2}. Assume also 
\be \label{IneqFurtherRegularityConditions}
\max_{i \in [n]} \max_{x \in \Omega_{i}}  \P_{x}[\tEsc[i] > \epsilon \varphi_{\max}] \leq 1 - \delta
\ee 
for some $\epsilon, \delta > 0$.
For $c > 0$, let $G_{c}$ be the directed graph with vertices $V_{c} = \Omega$ and edges 
\be 
E_{c} = \{ (i,j) \, : \, \min_{x \in \Omega_{i}} \P_{x}[X_{\tEsc[i]} \in \Omega_{j}] \geq c \}.
\ee 
Let $D$ be the diameter of $G_{c}$. Then 
\be 
\overline{\varphi}_{\mathrm{hit}} \leq  \epsilon \varphi_{\max} D (c \delta)^{-D}.
\ee 
\end{lemma}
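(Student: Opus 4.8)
The plan is to bound the hitting time $\tau_{\cup_{i \in I} \Omega_i}$ from an arbitrary start by decomposing a successful trajectory into $D$ stages, one per edge of a shortest directed path in $G_c$ from the starting part to $I$, and showing that each stage succeeds with probability at least $c\delta$ within time $\lceil \epsilon \varphi_{\max} \rceil$. First I would fix $x \in \Omega$, say $x \in \Omega_{i_0}$, and a target $I$ with $\pi(\cup_{i\in I}\Omega_i) \geq \frac{\alpha}{2}$. Since $G_c$ has diameter $D$, there is a directed path $i_0 \to i_1 \to \cdots \to i_\ell$ in $G_c$ with $i_\ell \in I$ and $\ell \leq D$ (here I should note that if $I$ is nonempty and every vertex is within distance $D$ of every other, in particular of $I$; one may need to be slightly careful about whether the diameter is defined as a max over ordered pairs, but the statement as given lets us assume the needed path exists of length at most $D$).

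The key step is the per-stage estimate. Suppose the chain is currently at some $y \in \Omega_{i_k}$ with $k < \ell$, and $(i_k, i_{k+1}) \in E_c$. By definition of $E_c$, we have $\P_y[X_{\tEsc[i_k]} \in \Omega_{i_{k+1}}] \geq c$. Moreover, by hypothesis \eqref{IneqFurtherRegularityConditions}, $\P_y[\tEsc[i_k] > \epsilon \varphi_{\max}] \leq 1 - \delta$, i.e. $\P_y[\tEsc[i_k] \leq \epsilon \varphi_{\max}] \geq \delta$. I would combine these: the event ``escape happens by time $\lceil \epsilon \varphi_{\max}\rceil$ \emph{and} the exit point lands in $\Omega_{i_{k+1}}$'' — more carefully, I want to argue $\P_y[\tEsc[i_k] \leq \lceil\epsilon\varphi_{\max}\rceil,\ X_{\tEsc[i_k]} \in \Omega_{i_{k+1}}] \geq c\delta$. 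The cleanest route is to observe that on the event $\{X_{\tEsc[i_k]} \in \Omega_{i_{k+1}}\}$ the escape time $\tEsc[i_k]$ is stochastically no larger than the unconditional escape time (this uses a monotonicity-type argument, or alternatively one can use that conditioning on the exit location only shrinks the escape time in the relevant sense), so the two events can be intersected with the claimed lower bound $c\delta$ on the probability. If that monotonicity is delicate, an honest fallback is to use the subgeometric tail bound \eqref{eqn:subexp2} together with Markov's inequality on $\E_y[\tEsc[i_k]]$ to get escape within $O(\epsilon\varphi_{\max})$ with probability bounded below by a function of $\delta$, absorbing constants into the final bound; this is the step I expect to require the most care.

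With the per-stage estimate in hand, I would apply the strong Markov property at the successive exit times and conclude that, starting from $x$, with probability at least $(c\delta)^\ell \geq (c\delta)^D$ the chain reaches $\cup_{i\in I}\Omega_i$ within $\ell \lceil \epsilon\varphi_{\max}\rceil \leq D\lceil\epsilon\varphi_{\max}\rceil$ steps. By \eqref{eqn:subexp}, the hitting time $\tau_{\cup_{i\in I}\Omega_i}$ then has geometric tails with these parameters: for every integer $k$, $\max_x \P_x[\tau_{\cup_{i\in I}\Omega_i} > k D\lceil\epsilon\varphi_{\max}\rceil] \leq (1-(c\delta)^D)^k$. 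Summing the tail series gives
\be
\max_x \E_x[\tau_{\cup_{i\in I}\Omega_i}] \leq D\lceil\epsilon\varphi_{\max}\rceil \cdot (c\delta)^{-D},
\ee
and taking the maximum over admissible $I$ yields $\overline{\varphi}_{\mathrm{hit}} \leq \epsilon\varphi_{\max} D (c\delta)^{-D}$ (up to the harmless rounding in the ceiling, which is absorbed in the stated bound). The main obstacle, as noted, is rigorously marrying the "exit direction" bound $c$ and the "exit time" bound $\delta$ into a single joint lower bound $c\delta$ within the time window; everything after that is a standard geometric-trials argument.
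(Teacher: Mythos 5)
Your strategy is the same as the paper's: choose a directed path $i_0 \to i_1 \to \cdots \to i_\ell$ in $G_{c}$ with $\ell \leq D$ ending in $I$, argue that each stage succeeds with probability at least $c\delta$ within a window of length $\epsilon \varphi_{\max}$, conclude by the strong Markov property that $\min_{x}\P_{x}[\tau_{A} \leq \epsilon \varphi_{\max} D] \geq (c\delta)^{D}$ for $A = \cup_{i \in I}\Omega_{i}$, and then sum the geometric tails coming from \eqref{eqn:subexp} to bound $\overline{\varphi}_{\mathrm{hit}}$; that last summation step in your write-up is exactly the paper's computation and is fine.

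The genuine problem is your justification of the per-stage joint bound, which you correctly identified as the crux. The claim that, on the event that the escape lands in $\Omega_{i_{k+1}}$, the escape time from $\Omega_{i_k}$ is stochastically no larger than the unconditional escape time is false in general: exits into a particular neighbouring part can be systematically slow. For example, if transitions into $\Omega_{i_{k+1}}$ are only possible from states deep inside $\Omega_{i_k}$ that take many steps to reach, while the frequent fast exits lead to other parts, then conditioning on landing in $\Omega_{i_{k+1}}$ lengthens the escape time, so the claimed domination fails. From the two marginal hypotheses alone the generic conclusion is only $\P_{y}[\,\text{escape} \leq \epsilon\varphi_{\max} \text{ and lands in } \Omega_{i_{k+1}}\,] \geq c + \delta - 1$ (intersect the two events, or apply optional stopping to the harmonic function $h(x) = \P_{x}[X_{\text{escape}} \in \Omega_{i_{k+1}}]$), which is vacuous when $c + \delta \leq 1$; so the multiplicative constant $c\delta$ does not come for free, and your primary route does not deliver it. Your fallback can be made rigorous, but it changes the bound: \eqref{IneqFurtherRegularityConditions} together with \eqref{eqn:subexp} gives an expected escape time of at most $\epsilon\varphi_{\max}/\delta$ from every state, so by Markov's inequality the escape occurs within $2\epsilon\varphi_{\max}/(c\delta)$ steps except with probability $c/2$, and then intersecting with the exit-location event gives a per-stage success probability of at least $c/2$ in a window of length $2\epsilon\varphi_{\max}/(c\delta)$; iterating yields $\overline{\varphi}_{\mathrm{hit}} \leq 2\epsilon\varphi_{\max} D\, c^{-1}\delta^{-1}(2/c)^{D}$ rather than the stated $\epsilon\varphi_{\max} D (c\delta)^{-D}$. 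That is of the same flavour and suffices for the way the lemma is used later (there all these parameters are of order one), but as written your proof does not establish the lemma's exact inequality. You should also be aware that the paper's own proof asserts the $(c\delta)^{D}$ estimate as immediate without supplying the joint-event argument, so the delicate point you flagged is precisely where the content of the lemma lies.
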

\begin{proof}
Fix $I \subset [n]$ satisfying $ \pi( \cup_{i \in I}\Omega_{i}) \geq \frac{\alpha}{2}$ and let $A = \cup_{i \in I}\Omega_{i}$. Equation \eqref{IneqFurtherRegularityConditions} and the definition of $G_{c}$ immediately give
\be 
\min_{x \in \Omega} \P_{x}[ \tau_{A} \leq \epsilon \varphi_{\max} D] \geq (c \delta)^{D}.
\ee 
By Equation \eqref{eqn:subexp}, this implies
\be 
\max_{x \in \Omega} \E_{x}[\tau_{A}] &= \max_{x \in \Omega}  \sum_{t =0}^{\infty} \P_{x}[\tau_{A} > t] \leq  \max_{x \in \Omega}   \sum_{k=0}^{\infty} \epsilon \varphi_{\max} D \P_{x}[ \tau_{A} > k \epsilon \varphi_{\max} D] \\
&\leq \epsilon \varphi_{\max} D    \sum_{k=0}^{\infty}  (\max_{x \in \Omega} \P_{x}[ \tau_{A} >  \epsilon \varphi_{\max} D])^{k} \leq \epsilon \varphi_{\max} D    \sum_{k=0}^{\infty}  (1 - (c \delta)^{D} )^{k} =  \epsilon \varphi_{\max} D (c \delta)^{-D}.
\ee 
Since this holds for all sets $I \subset [n]$ satisfying $ \pi( \cup_{i \in I}\Omega_{i}) \geq \frac{\alpha}{2}$, this completes the proof.
\end{proof}

Lemmas \ref{LemmaBasicMixing} and \ref{LemmaBasicMixing2} allow us to bound the mixing time  $\tmix$  of $\{X_{t} \}_{t \in \mathbb{N}}$ in terms of the mixing times $\{ \varphi_{i} \}_{i=1}^{n}$ of the traces of $\{X_{t} \}_{t \in \mathbb{N}}$ as well as the occupation times $\{ \kappa_{i} \}_{i=1}^{n}$. The assumption that we have good bounds on $\varphi_{i}$ is similar to the assumption of a good bound on the relaxation times of restricted chains in \cite{JSTV04}, and it often holds. However, the occupation times are much more difficult to understand than the relaxation times of the projected chain used in \cite{JSTV04}. The remainder of this paper is devoted to building tools for bounding these occupation times. 

\section{Mixing Bounds Via Well-Covering Times} \label{SecNaiveBoot}
The mixing bounds obtained in this section are based on the following observations:

\begin{enumerate}
\item If the occupation measure $\kappa_{i}(T)$ is large relative to $\varphi_{i}$ for some set $\Omega_{i}$, with high probability it will also be large for `neighboring' states $\Omega_{j}$ with $\overline{K}(i,j)$ large.
\item If the high-probability event described above holds for all pairs $i,j$, then every set $\Omega_{i}$ with large stationary measure will also have large occupation measure.
\end{enumerate}

We begin our development by defining a quantity that we call the \textit{well-covering time} that allows us to make this observation more precise. 

Readers interested in quickly obtaining reasonable bounds may skip this definition on a first reading of the paper: the comparison theory for well-covering times developed in Section \ref{SubsecCompWellCov}, combined with the bound on the well-covering time of a simple example computed in Section \ref{SubsecWellCovEx}, allows users to estimate well-covering times without computing any directly. Thus, the main result of this section (Theorem \ref{ThmMainBootstrapBound}) can be used in simple examples without using this definition. For more complicated examples, we still suggest estimating the well-covering times directly only for simple examples as in Section \ref{SubsecWellCovEx}, then using the comparison bounds to relate these to the Markov chain of interest.

\subsection{Well-Covering Times}
Fix $n \in \mathbb{N}$ and let $Q$ be a reversible, irreducible, aperiodic  transition kernel on $[n]$ with stationary measure $\mu$. Fix constants $0 < B, T, t_{1},\ldots,t_{n} <\infty$ and define the set $\mathcal{S}_{B,T}$ to be the pairs $\kappa = (\kappa(1),\cdots,\kappa(n)) \in [0,1]^{n}$, $N = [N(i,j)] \in [0,1]^{n} \times [0,1]^{n}$ satisfying $\sum_{j} \kappa(j) = 1$, $\sum_{i,j} N(i,j) = 1$ and the inequalities:
\be [ineqWellCovDef]
|N(i,j) - \kappa(i) Q(i,j) |, \, |N(i,j) - \kappa(j) Q(j,i) |  &\leq \frac{B \sqrt{\kappa(i)}}{\sqrt{T}},  \\
| \sum_{i} N(i,j) - \kappa(j) |, \, | \sum_{j} N(i,j) - \kappa(i) |  &\leq \frac{1}{ T}.
\ee 
The terms $\kappa$ and $N$ in \eqref{ineqWellCovDef} represent rescaled counts of the empirical occupation measures and transition counts for a Markov chain with transition kernel $K$ and projected kernel $\overline{K} = Q$. For fixed constant $B$, the set $\mathcal{S}_{B,T}$ represents all \textit{plausible} joint values of $\kappa,N$ over a run of the Markov chain for $T$ steps. For example, while it is \textit{possible} to have $\kappa$ concentrated on one part $\Omega_{1}$ of the partition of a Markov chain for a time $T \gg \varphi_{1} \sum_{j \neq 1} \overline{K}(1,j)$, this event is extremely unlikely and so for fixed $B$, the pair $\kappa = (1,0,\ldots)$, $N \equiv 0$ will not be in $\mathcal{S}_{B,T}$ for  $T$ large. \par
$\mathcal{S}_{B,T}$ is of interest for $T$ large. In the limit $T=\infty$, the set of Equations \eqref{ineqWellCovDef} yield
\be\label{eqn:Tinf1}
N(i,j) = \kappa(i) Q(i,j) = \kappa(j) Q(j,i)
\ee
and
\be \label{eqn:Tinf2}
\sum_{i}N(i,j) = \kappa(j),  \sum_{j}N(i,j) = \kappa(i).
\ee
Summing \eqref{eqn:Tinf1} with respect to the index $i$ and using \eqref{eqn:Tinf2} gives
\be
\sum_{i} \kappa(i) Q(i,j)&=\sum_i N(i,j) 
= k(j).
\ee
Thus the probability vector $\kappa$ satisfies $\kappa = \kappa Q$. Since $Q$ is irreducible and aperiodic, it follows that $\kappa = \mu$ when $T = \infty$.

\begin{defn} [Well-Covering Time]
The \textit{well-covering} time $\tau_{\mathrm{wc}} = \tau_{\mathrm{wc}} (t_{1},\ldots,t_{n},  B)$ associated with the kernel $Q$ and the associated set $\mathcal{S}_{B,T}$ satisfying \eqref{ineqWellCovDef} is defined as
\be 
\tau_{\mathrm{wc}} = \min \left \{ T > 0 \, : \, \forall \, (\kappa, N) \in \mathcal{S}_{B,T}, \, \forall \, i \in [n], \, \kappa(i) > \frac{t_{i}}{T} \right  \}.
\ee 
\end{defn}
This definition might seem slightly unwieldy at first glance. We give more intuition on this quantity in the following sections. 
\subsection{Well-Covering Times for the simple Random Walk on a Graph} \label{SubsecWellCovEx}
We compute the well-covering time of a simple random walk on a graph. The calculations in this section can be combined with the comparison results in Section \ref{SubsecCompWellCov} to obtain crude but useful bounds on the well-covering numbers of a much broader collection of Markov chains.\par Let $G = (V,E)$ be a tree with $|V| = n$ vertices, maximum degree less than $\Delta$ and diameter $D$. Let $Q$ be the transition kernel on state space $[n]$ given by
\be 
Q(i,j) = \frac{1}{2 \Delta} \textbf{1}_{(i,j) \in E}
\ee 
for $i \neq j$ and $Q(i,i) = 1 - \sum_{j \neq i} Q(i,j)$.

This is a kernel associated with simple random walk on $G$, and its stationary distribution is $\mu(i) = \frac{1}{n}$ for all $i \in V$. 
\begin{lemma} \label{LemmaWellCovEx}Let $\tau_{\mathrm{wc}}$ be the well-covering time associated with the kernel $Q$. For any $\phi > 0$, 
\be \label{IneqFirstWellCoveringBound}
\tau_{\mathrm{wc}}(\phi, \ldots, \phi, B) \leq n \max( 10^3 \Delta^{2} B^{2} D^{2}, 4 \phi).
\ee 
\end{lemma}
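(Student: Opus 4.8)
The plan is to argue directly from the definition of the well-covering time: I must show that if $T \geq n \max(10^3 \Delta^2 B^2 D^2, 4\phi)$, then every pair $(\kappa, N) \in \mathcal{S}_{B,T}$ satisfies $\kappa(i) > \phi/T$ for all $i \in [n]$. Fix such a $T$ and such a pair $(\kappa,N)$. Since $\sum_j \kappa(j) = 1$ and there are $n$ coordinates, there is some index $i_0$ with $\kappa(i_0) \geq 1/n$. The idea is then to ``propagate'' a lower bound on $\kappa$ along edges of the tree $G$: the inequalities \eqref{ineqWellCovDef} say that $N(i,j)$ is close to both $\kappa(i)Q(i,j)$ and $\kappa(j)Q(j,i)$, so whenever $(i,j) \in E$ (hence $Q(i,j) = Q(j,i) = \frac{1}{2\Delta}$), we get
\be
\Big| \frac{\kappa(i)}{2\Delta} - \frac{\kappa(j)}{2\Delta} \Big| \leq |N(i,j) - \kappa(i)Q(i,j)| + |N(i,j) - \kappa(j)Q(j,i)| \leq \frac{2B\sqrt{\kappa(i)}}{\sqrt{T}} \wedge \frac{2B\sqrt{\kappa(j)}}{\sqrt{T}},
\ee
so $|\kappa(i) - \kappa(j)| \leq \frac{4\Delta B \sqrt{\kappa(i)}}{\sqrt{T}}$ and likewise with $i,j$ swapped. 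Note the $\sqrt{\kappa}$ on the right-hand side is what makes this work: if $\kappa(j)$ is already tiny, the change from one step to the next is correspondingly tiny, so $\kappa$ cannot collapse too quickly along a short path.

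The main step is to turn this one-step comparison into a bound valid along a path of length at most $D$ from $i_0$ to an arbitrary vertex $i$. I would set up an induction/telescoping argument along the tree-path $i_0 = v_0, v_1, \ldots, v_\ell = i$ with $\ell \leq D$. Let $a = \sqrt{\kappa(v_k)}$ and $b = \sqrt{\kappa(v_{k+1})}$; the inequality $|a^2 - b^2| \leq \frac{4\Delta B}{\sqrt{T}} \max(a,b)$ gives $|a - b| = \frac{|a^2-b^2|}{a+b} \leq \frac{|a^2-b^2|}{\max(a,b)} \leq \frac{4\Delta B}{\sqrt{T}}$. So $\sqrt{\kappa(\cdot)}$ is $\frac{4\Delta B}{\sqrt{T}}$-Lipschitz along edges, and therefore $|\sqrt{\kappa(i)} - \sqrt{\kappa(i_0)}| \leq \frac{4\Delta B D}{\sqrt{T}}$ for every $i$. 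Since $\sqrt{\kappa(i_0)} \geq 1/\sqrt{n}$, this yields $\sqrt{\kappa(i)} \geq \frac{1}{\sqrt{n}} - \frac{4\Delta B D}{\sqrt{T}}$.

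Finally I close with arithmetic. Because $T \geq 10^3 n \Delta^2 B^2 D^2$, we have $\frac{4\Delta B D}{\sqrt{T}} \leq \frac{4}{\sqrt{10^3}}\cdot\frac{1}{\sqrt{n}} \leq \frac{1}{2\sqrt{n}}$, so $\sqrt{\kappa(i)} \geq \frac{1}{2\sqrt{n}}$, i.e. $\kappa(i) \geq \frac{1}{4n}$. On the other hand $T \geq 4\phi n$ gives $\frac{\phi}{T} \leq \frac{1}{4n} \leq \kappa(i)$, and in fact the inequality is strict once we note that either the Lipschitz slack is strictly positive or we can absorb a factor; to be safe I would simply take $T \geq n \max(10^3\Delta^2 B^2 D^2, 4\phi)$ and observe the chain of inequalities is strict because $\frac{1}{\sqrt{10^3}} < \frac{1}{4}$ leaves room. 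This holds for every $i \in [n]$ and every $(\kappa, N) \in \mathcal{S}_{B,T}$, so by definition $\tau_{\mathrm{wc}}(\phi,\ldots,\phi,B) \leq n\max(10^3\Delta^2 B^2 D^2, 4\phi)$.

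The part requiring the most care is the propagation step: one must use the correct side of the two-sided bound in \eqref{ineqWellCovDef} (the version with $\sqrt{\kappa(i)}$ versus $\sqrt{\kappa(j)}$) so that the Lipschitz constant for $\sqrt{\kappa}$ comes out uniform and independent of which endpoint is larger; the division by $a+b$ trick is what makes the square-root substitution pay off. The second-moment inequalities $|\sum_i N(i,j) - \kappa(j)| \leq 1/T$ are not actually needed here — they matter for sharper applications — so I would not use them, and the tree/diameter hypotheses enter only through the existence of a path of length $\leq D$ with every edge carrying $Q$-weight exactly $\frac{1}{2\Delta}$.
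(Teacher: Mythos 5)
Your proof is correct, and it follows the same overall strategy as the paper's: pigeonhole to find a vertex with $\kappa(i_0)\geq 1/n$, then propagate a lower bound along tree paths of length at most $D$ using the first line of \eqref{ineqWellCovDef}. The difference lies in how the one-step comparison is iterated. The paper runs a multiplicative induction, $\kappa(j)\geq \kappa(\ell)\bigl(1-\tfrac{4B\Delta}{\sqrt{\kappa(\ell)}\sqrt{T}}\bigr)$, which needs the induction hypothesis $\kappa(\ell)\geq \tfrac{1}{4n}$ inside the step and then accumulates an exponential factor $e^{-s x}$ along the path; your observation that $\sqrt{\kappa}$ is $\tfrac{4\Delta B}{\sqrt{T}}$-Lipschitz along edges (via $|a-b|=|a^2-b^2|/(a+b)\leq |a^2-b^2|/\max(a,b)$) linearizes this into a clean additive telescoping with no inner induction. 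This is, if anything, tidier: the paper's version implicitly uses a bound of the form $(1-x)^{s}\gtrsim e^{-sx}$, which needs the slack in the constant $10^3$ to be made rigorous, whereas your arithmetic at the end is immediate. Two small remarks. First, in your displayed one-step bound the $\sqrt{\kappa(j)}$ half of the minimum does not follow from the two inequalities you wrote (both of which involve $N(i,j)$ and carry $\sqrt{\kappa(i)}$ on the right); it comes from applying \eqref{ineqWellCovDef} to the reversed pair $(j,i)$, which involves $N(j,i)$. But as your own $a+b$ trick shows, the $\sqrt{\kappa(i)}$ side alone already yields the uniform Lipschitz constant, so the minimum is not needed. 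Second, you are right that the second line of \eqref{ineqWellCovDef} plays no role here, and your check that the strict inequality $\kappa(i)>\phi/T$ demanded by the definition actually holds (because $4/\sqrt{10^3}<1/2$ leaves room) is a detail the paper glosses over.
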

\begin{proof}
Let $T \geq n \max( 10^3 \Delta^{2} B^{2} D^{2}, 4 \phi)$ and let $(\kappa, N) \in \mathcal{S}_{B,T}$. By the pigeonhole principle, there exist a vertex $i$ such that $\kappa(i) \geq \frac{1}{n}$. For $u,v \in G$, denote by $|u-v|$ the graph distance on $G$, that is, the length of the shortest path in $G$ from $u$ to $v$. By induction on the quantity $|i-j|, j \in V$, we will show that 

\be \label{IneqIndHypCircle}
\kappa(j) \geq \kappa(i) e^{-D \frac{16B \Delta \sqrt{n}}{\sqrt{T}}} \geq \frac{1}{4n}.
\ee 
It is clear that inequality \eqref{IneqIndHypCircle} holds for $|i-j| = 0$. To prove the inequality for all $j$, fix $0 < s \leq D$  and assume that inequality \eqref{IneqIndHypCircle} holds for all $j$ such that $|i-j| < s$; we will prove that it holds for $j$ such that $|i-j| = s$. Fix $j$ so that $|i-j| = s$ and also fix $\ell \in G$ that satisfies $|i-\ell| = s-1$ and $|\ell-j| = 1$; by the definition of the graph distance, at least one such vertex exists. By inequality \eqref{ineqWellCovDef}, 
\be 
N(\ell,j) &\geq \frac{1}{2 \Delta} \kappa(\ell) - \frac{B \sqrt{\kappa(\ell)}}{\sqrt{T}} \\
\kappa(j) \frac{1}{2 \Delta} &\geq N(\ell,j) -  \frac{B \sqrt{\kappa(\ell)}}{\sqrt{T}}. \\
\ee 
Combining these two inequalities,
\be \label{IneqWellCovBasicBit}
\kappa(j) \geq \kappa(\ell) -  \frac{4 B \Delta \sqrt{\kappa(\ell)}}{\sqrt{T}} = \kappa(\ell)\big(1 - \frac{4 B \Delta }{\sqrt{\kappa(\ell)} \sqrt{T}}\big).
\ee 
By the induction hypothesis \eqref{IneqIndHypCircle}, 
\be 
\kappa(j) \geq \kappa(\ell) (1 - \frac{16B \Delta \sqrt{n}}{\sqrt{T}}) \geq \kappa(i) e^{-s \frac{16B \Delta \sqrt{n}}{\sqrt{T}}}.
\ee 
Since $T \geq  10^3 \Delta^{2} B^{2}  D^{2} n$ and $j \leq D$, this implies 
\be 
\kappa(j) \geq \frac{1}{4n},
\ee 
proving inequality \eqref{IneqIndHypCircle} for the case $|i-j| = s$ and thus completing the induction argument.  Since $T \geq 4 n \phi$, inequality \eqref{IneqIndHypCircle} implies
\be 
\kappa(j) \geq \frac{\phi}{T} 
\ee 
for all $j \in [n]$, completing the proof of inequality \eqref{IneqFirstWellCoveringBound}. 
\end{proof}
\subsection{Well-Covering Time Bounds Mixing Times}
For $1 \leq i,j \leq n$, define the first $\Omega_{i} \rightarrow \Omega_{j}$ transition time $T_{i,j}^{(0)} = 0$, then define subsequent transition times by 
\be 
T_{i,j}^{(t+1)} = \min \{ s > T_{i,j}^{(t)} \, : \, X_{s} \in \Omega_{j}, X_{s-1} \in \Omega_{i} \}.
\ee 
Define the number of such transitions before time $T$ by 
\be 
N_{i,j}(T) = \max \{ t \geq 0 \, : \, T_{i,j}^{(t)} < T \}.
\ee 

\begin{thm}\label{ThmMainBootstrapBound}
Fix $\frac{1}{2} < 1 - \alpha < \beta < 1$, fix $I \subset [n]$ so that $\pi(\cup_{i \in I} \Omega_{i}) \geq \beta > \frac{1}{2}$, and set $\gamma = \min( \frac{1}{2}, \frac{\alpha + \beta - 1}{\beta}) > 0$. Let $\tau_{\mathrm{wc}}$ be the well-covering time associated with the projected kernel $\overline{K}$. Finally, for $1 \leq i \leq n$, set $\varphi_{i}' = \varphi_{i} \mathbf{1}_{i \in I}$. Then for $T$ satisfying
\be 
T > \tau_{\mathrm{wc}} ( 8 c_{\gamma}' \varphi_{1}', \ldots, 8  c_{\gamma}' \varphi_{n}',  \sqrt{8 \varphi_{\max} \log(64n^{2}T)}),
\ee 
we have 
\be 
\tmix \leq \frac{4}{3} c_{\alpha} T.
\ee 
\end{thm}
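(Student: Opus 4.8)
The plan is to reduce Theorem \ref{ThmMainBootstrapBound} to an application of Lemma \ref{LemmaBasicMixing} (the ``Basic Mixing Bound'') with the given $\alpha,\beta,\gamma,I$. By that lemma it suffices to show that the quantity $\mathcal{T}$ defined in \eqref{eqn:Taulem} satisfies $\mathcal{T} \leq T$ whenever $T$ exceeds the stated well-covering time. Concretely, we must produce, for the given $T$, some $0 < t < T$ such that
\be
\max_{i \in I}\Big( \frac{\varphi_i}{c'_\gamma t} + \max_{z \in \Omega} \P_z[\kappa_i(T) < t] \Big) < \frac{1}{4}.
\ee
We will take $t$ of order $8 c'_\gamma \varphi_{\max}$ (more precisely, large enough that $\varphi_i/(c'_\gamma t) < 1/8$ for all $i \in I$, which is automatic once $t \geq 8 c'_\gamma \varphi_i$ for each such $i$), so the first term is controlled, and the whole burden falls on showing $\max_{z}\P_z[\kappa_i(T) < t] < 1/8$ for every $i \in I$.

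To control $\max_z \P_z[\kappa_i(T) < t]$, the idea is to view the normalized empirical occupation vector $\kappa(\cdot) = (\kappa_1(T)/T, \ldots, \kappa_n(T)/T)$ together with the normalized transition-count array $N(i,j) = N_{i,j}(T)/T$ as a random element of $[0,1]^n \times [0,1]^{n\times n}$, and to argue that, with high probability, this pair lies in the set $\mathcal{S}_{B,T}$ associated with the projected kernel $Q = \overline{K}$ and the choice $B = \sqrt{8\varphi_{\max}\log(64 n^2 T)}$. The two families of inequalities defining $\mathcal{S}_{B,T}$ in \eqref{ineqWellCovDef} are of two types: (i) the ``flow'' constraints $|\sum_i N(i,j) - \kappa(j)|, |\sum_j N(i,j) - \kappa(i)| \leq 1/T$, which hold deterministically up to an $O(1/T)$ boundary effect because each visit to $\Omega_j$ is immediately preceded and followed by a transition into/out of $\Omega_j$; and (ii) the concentration constraints $|N(i,j) - \kappa(i) Q(i,j)| \leq B\sqrt{\kappa(i)}/\sqrt{T}$ and its reverse-chain analogue. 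For (ii), the key observation is that, conditioned on the sequence of successive visits to $\Omega_i$, the indicator that the next step after the $s$-th visit lands in $\Omega_j$ is a Bernoulli variable whose expectation is exactly $\overline{K}(i,j)$ on average (using reversibility of $K$ and the definition \eqref{EqDefProjChain} of $\overline{K}$, together with the fact that the trace chain on $\Omega_i$ has stationary measure $\pi_i$). So $N_{i,j}(T)$ is close to $\kappa_i(T)\,\overline{K}(i,j)$ up to fluctuations of order $\sqrt{\kappa_i(T)}$; a Bernstein/Azuma martingale bound gives deviations of order $\sqrt{\kappa_i(T)\log(\cdot)}$, and the union bound over the $\leq n^2$ pairs $(i,j)$ plus the factor of $T$ (for a union over time scales, or simply over the single time $T$) is absorbed into the $\log(64n^2 T)$ inside $B$; the role of $\varphi_{\max}$ in $B$ is to account for the fact that increments of $\kappa_i$ between genuine transitions are controlled on the scale of the trace-chain mixing time $\varphi_i \leq \varphi_{\max}$. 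Once this ``good event'' $\{(\kappa,N) \in \mathcal{S}_{B,T}\}$ has probability $\geq 1 - 1/8$ (this is where the $64$ and the extra powers in the logarithm get tuned), the definition of the well-covering time does the rest: on this event, since $T > \tau_{\mathrm{wc}}(8c'_\gamma \varphi_1', \ldots, 8 c'_\gamma \varphi_n', B)$, we get $\kappa_i(T)/T = \kappa(i) > 8 c'_\gamma \varphi_i'/T = 8 c'_\gamma \varphi_i / T$ for every $i \in I$, i.e. $\kappa_i(T) > 8 c'_\gamma \varphi_i$, which (choosing $t = \lceil 8 c'_\gamma \varphi_{\max}\rceil$ or the appropriate value) forces $\kappa_i(T) \geq t$ and hence $\P_z[\kappa_i(T) < t] \leq 1/8$ after accounting for the failure probability of the good event. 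Combining with $\varphi_i/(c'_\gamma t) < 1/8$ gives $\mathcal{T} \leq T$, and Lemma \ref{LemmaBasicMixing} yields $\tmix \leq \frac{4}{3} c_\alpha T$.

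The main obstacle I expect is step (ii): setting up the martingale cleanly and verifying that the fluctuation bound really has the form $B\sqrt{\kappa(i)}/\sqrt{T}$ with the specific $B = \sqrt{8\varphi_{\max}\log(64 n^2 T)}$ rather than something with worse constants or an extra factor. One has to be careful about two subtleties: first, the ``clock'' for the martingale is the number of visits to $\Omega_i$, which is itself random (so one should run the martingale up to the stopping time $\kappa_i(T)$ and use a maximal inequality, or condition on $\kappa_i(T)$); second, the reverse-chain inequality $|N(i,j) - \kappa(j) Q(j,i)| \leq B\sqrt{\kappa(i)}/\sqrt{T}$ must be derived either symmetrically (running the chain backwards, using reversibility) or deduced from the forward bound together with the flow constraints of type (i) and the detailed-balance relation $\pi(\Omega_i)\overline{K}(i,j) = \pi(\Omega_j)\overline{K}(j,i)$. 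The bookkeeping of constants ($4/3$, $1/4$, $1/8$, $64$) is routine but needs to be kept consistent with the thresholds appearing in $\tau_{\mathrm{wc}}$. Everything else — the reduction to Lemma \ref{LemmaBasicMixing}, the choice of $t$, and the deterministic flow constraints — is straightforward.
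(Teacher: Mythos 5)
Your proposal follows essentially the same route as the paper: reduce to Lemma \ref{LemmaBasicMixing}, show that with probability at least $1-\tfrac18$ the normalized pair $(\kappa_i(T)/T,\,N_{i,j}(T)/T)$ lies in $\mathcal{S}_{B,T}$ for $B=\sqrt{8\varphi_{\max}\log(64n^2T)}$ (deterministic flow constraints plus concentration, with a union bound over pairs $(i,j)$ and over the random value of $\kappa_i(T)$), and then let the definition of $\tau_{\mathrm{wc}}$ force $\kappa_i(T)$ to exceed the threshold for every $i\in I$. The concentration step you flagged as the main obstacle is handled in the paper (Lemma \ref{LemSpreadBound}) exactly along the lines you anticipated: Azuma for the residual $N_{i,j}-\sum_s f_{i,j}(X_s^{(i)})$, a Markov-chain concentration inequality (Corollary 2.11 of \cite{Paul14}) for the empirical mean of $f_{i,j}$ along the trace chain --- which is precisely where $\varphi_{\max}$ enters, rather than via ``increments of $\kappa_i$'' --- and time-reversal for the reverse-chain inequality.
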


We need the following two lemmas. 
\begin{lemma}\label{LemSpreadBound}
Fix notation as in Theorem \ref{ThmMainBootstrapBound}. Then for all $X_{0} = x \in \Omega$, $t \in \mathbb{N}$ and all $0<c<\infty$,
\be \label{IneqSpreadBound1}
\P_{x} \big[ | \frac{1}{t+1}  N_{i,j}(\kappa_{i}^{-1}(t)) -  \overline{K}(i,j)| > c\big]  \leq 4 e^{-\frac{c^{2} (t+1)}{8 \varphi_{\max} }}
\ee 
and
\be \label{IneqSpreadBound2}
\P_{x} \big[ | \frac{1}{t+1}  N_{i,j}(\kappa_{j}^{-1}(t)) -  \overline{K}(j,i)| > c\big] \leq 4 e^{-\frac{c^{2} (t+1)}{8 \varphi_{\max} }}.
\ee 
\end{lemma}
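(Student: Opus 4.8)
The plan is to recognize $N_{i,j}(\kappa_i^{-1}(t))$ as a sum of (conditionally) Bernoulli-type increments indexed by the successive visits of $\{X_t\}$ to $\Omega_i$, and then to apply the subgeometric hitting-time estimate \eqref{eqn:subexp2} together with a martingale/Hoeffding-type concentration argument. Concretely, let $\eta_i(1) < \eta_i(2) < \cdots$ be the successive hitting times of $\Omega_i$ as in \eqref{EqDefOccupationMeasureCounters}, and for $u \ge 1$ let $\xi_u = \mathbf{1}_{X_{\eta_i(u)+1} \in \Omega_j}$ be the indicator that the chain steps directly from the $u$-th visit to $\Omega_i$ into $\Omega_j$. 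Then $N_{i,j}(\kappa_i^{-1}(t)) = \sum_{u=1}^{t+1} \xi_u$ (the $(t{+}1)$-st visit to $\Omega_i$ occurs at time $\kappa_i^{-1}(t)$, with the usual off-by-one from $\eta_i(0) = -1$). First I would observe that by the strong Markov property applied at $\eta_i(u)$, the conditional law of $\xi_u$ given $\mathcal{F}_{\eta_i(u)}$ is Bernoulli with parameter $K(X_{\eta_i(u)}, \Omega_j)$, whose $\pi_i$-average is exactly $\overline{K}(i,j)$ by \eqref{EqDefProjChain}. So $M_t := \sum_{u=1}^{t+1}(\xi_u - \overline{K}(i,j))$ is close to a martingale, but not exactly one, because the conditional mean $K(X_{\eta_i(u)},\Omega_j)$ is not the constant $\overline{K}(i,j)$ — it depends on which point of $\Omega_i$ we are at.

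To handle that discrepancy I would split $\xi_u - \overline{K}(i,j) = (\xi_u - K(X_{\eta_i(u)},\Omega_j)) + (K(X_{\eta_i(u)},\Omega_j) - \overline{K}(i,j))$. The first piece is a genuine bounded martingale difference sequence (increments in $[-1,1]$, conditional mean zero), so Azuma--Hoeffding gives a tail bound of the form $2e^{-c^2(t+1)/2}$ for its partial sum exceeding $c(t+1)/2$. The second piece is controlled by the mixing of the trace chain $K_i$: the sequence $X_{\eta_i(1)}, X_{\eta_i(2)}, \ldots$ is precisely the trace chain $\{X^{(i)}_t\}$ with kernel $K_i$ and stationary measure $\pi_i$, and $K_i$ has mixing time $\varphi_i \le \varphi_{\max}$. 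A Chernoff/decoupling bound for additive functionals of a rapidly-mixing reversible chain — or more elementarily, chopping the $t+1$ visits into $\lceil (t+1)/\varphi_{\max}\rceil$ blocks of length $\sim \varphi_{\max}$, within each of which the chain has mixed so the block-average of $K(X^{(i)}_\cdot, \Omega_j)$ is within a small error of its stationary mean $\overline{K}(i,j)$, and then applying Hoeffding across the (nearly independent) blocks — yields a tail bound of order $e^{-c^2(t+1)/(c' \varphi_{\max})}$. Choosing the constants in the split so that the two tails combine gives the stated bound $4e^{-c^2(t+1)/(8\varphi_{\max})}$; the factor $4$ absorbs the two error terms and the factor $8$ in the exponent absorbs the block-length overhead.

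For the second inequality \eqref{IneqSpreadBound2}, I would argue by the same method with the roles of $\Omega_i$ and $\Omega_j$ interchanged: $N_{i,j}(\kappa_j^{-1}(t))$ counts $\Omega_i \to \Omega_j$ transitions observed up to the $(t{+}1)$-st visit to $\Omega_j$, which equals $\sum_{u=1}^{t+1}\mathbf{1}_{X_{\eta_j(u)-1}\in\Omega_i}$ by reversibility (each entry into $\Omega_j$ at a visit time comes from some point, and we count those coming from $\Omega_i$), and by reversibility of $K$ the $\pi_j$-average of $\mathbf{1}_{X_{\eta_j(u)-1}\in\Omega_i} \cdot (\text{appropriate conditional probability})$ is $\overline{K}(j,i)$ — this is exactly the detailed-balance identity $\pi(\Omega_i)\overline{K}(i,j) = \pi(\Omega_j)\overline{K}(j,i)$ unwound at the level of the trace chain $K_j$. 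The main obstacle, and the step requiring the most care, is the concentration bound for the additive functional of the trace chain $K_i$ (resp. $K_j$): getting a clean exponential tail with the constant $\varphi_{\max}$ (rather than, say, a relaxation time or a worse power) requires invoking a mixing-time-based Chernoff bound for reversible chains and tracking constants through the block decomposition. Everything else — the martingale identification, Azuma, and the reversibility bookkeeping — is routine.
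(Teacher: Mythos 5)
Your decomposition is exactly the paper's: the paper writes $N_{i,j}(\kappa_i^{-1}(t))$ as a sum of transition indicators, splits off the conditional means $f_{i,j}(X_s^{(i)}) = K(X_s^{(i)},\Omega_j)$ (whose $\pi_i$-average is $\overline{K}(i,j)$), controls the centered-indicator part by Azuma and the additive functional of the trace chain by a mixing-time concentration bound (Corollary 2.11 of \cite{Paul14}, which plays the role of your block/Chernoff step), and deduces \eqref{IneqSpreadBound2} by applying \eqref{IneqSpreadBound1} to the time-reversal of the chain, i.e.\ the same reversibility argument you sketch. So the proposal is correct and essentially identical in approach, the only difference being that the paper invokes Paulin's ready-made bound instead of carrying out the block decomposition by hand.
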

\begin{proof}[Proof of Lemma \ref{LemSpreadBound}]
We first prove inequality \eqref{IneqSpreadBound1}. Define the function $f_{i,j}:\Omega_{i} \rightarrow [0,1]$ by 
\be 
f_{i,j}(y) = \P[X_{1} \in \Omega_{j} \, | \, X_{0} = y].
\ee 
We note $\pi_{i}(f_{i,j}) = \sum_{y \in \Omega_{i}} \pi_{i}(y) f_{i,j}(y) = \overline{K}(i,j)$. By Corollary 2.11 of \cite{Paul14}, for all $x \in \Omega$, $t \in \mathbb{N}$ and $c > 0$ we have
\be \label{IneqApplOfPaulConc}
\P_{x}[| \frac{1}{t+1} \sum_{s=0}^{t} f_{i,j}(X_{s}^{(i)}) -  \overline{K}(i,j)| > c] \leq 2 e^{-\frac{c^{2} (t+1)}{8 \varphi_{\max} }}.
\ee 
Next, denote by $K_{i}$ the transition kernel associated with the Markov chain $\{ X_{t}^{(i)} \}_{t \in \mathbb{N}}$. We may write
\be 
K_{i}(x,\cdot) &= \sum_{j } f_{i,j}(x) K_{i,j}(x,\cdot),
\ee 
where the kernels $K_{i,j}(x,\cdot)$ are defined by
\be 
K_{i,j}(x,\cdot) = \P[X_{1}^{(i)} \in \cdot | X_{0} = x, X_{1} \in \Omega_{j}].
\ee 
For fixed $i,j$, let $D_{i,j}(t) = \textbf{1}_{X_{t} \in \Omega_{i}, X_{t+1} \in \Omega_{j}}$. Thus $\E[D_{i,j}(t) | X_{t} = x \in \Omega_{i}] = f_{i,j}(x)$, and $\sum_{0 \leq s \leq \kappa_{i}^{-1}(t) \, : \, X_{s} \in \Omega_{i}} (D_{i,j}(s) -  f_{i,j}(X_{s}))$ is a martingale relative to the filtration $\sigma(\{X_{s}^{(i)} \}_{s \leq t})$. By Azuma's martingale inequality,
\be 
\P_{x}\big[\frac{1}{t+1}| N_{i,j}(\kappa_{i}^{-1}(t)) - \sum_{s=0}^{t} f_{i,j}(X_{s}^{(i)}) | > c\big] &= \P_{x}\big[\frac{1}{t+1}| \sum_{0 \leq s \leq \kappa_{i}^{-1}(t) \, : \, X_{s} \in \Omega_{i}} (D_{i,j}(s) -  f_{i,j}(X_{s})) | > c\big]\\
&\leq 2e^{-\frac{c^{2}(t+1)}{4}}.
\ee 
Combining this with inequality \eqref{IneqApplOfPaulConc} and using the fact that $\varphi_{\max} \geq 1$, we have shown inequality  \eqref{IneqSpreadBound1}: 
\be 
\P_{x}\big[ | \frac{1}{t+1}  N_{i,j}(\kappa_{i}^{-1}(t)) - \overline{K}(i,j)| > c\big] \leq 2 e^{-\frac{c^{2} (t+1)}{8 \varphi_{\max} }} + 2e^{-\frac{c^{2}(t+1)}{4}} \leq 4 e^{-\frac{c^{2} (t+1)}{8 \varphi_{\max} }}.
\ee 
Inequality \eqref{IneqSpreadBound2} follows immediately by applying inequality \eqref{IneqSpreadBound1} to the time-reversal of the chain $\{X_{t} \}_{t \in \mathbb{N}}$ and the proof is finished. 
\end{proof}

\begin{lemma}[Local-to-Global Spreading] \label{LemmaLocToGlob}
Fix notation as in Theorem \ref{ThmMainBootstrapBound}. Then for any $B>0$ and $T > \tau_{\mathrm{wc}}(B \varphi_{1},\ldots, B \varphi_{n}, \sqrt{8 \varphi_{\max} \log( \frac{8n^{2} T}{ \epsilon})})$, 
\be 
\max_{x \in \Omega} \P_{x}[\max_{i \in [n]} \frac{\kappa_{i}(T)}{\varphi_{i}} < B] \leq \epsilon.
\ee 
\end{lemma}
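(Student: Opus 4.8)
The plan is to derive Lemma~\ref{LemmaLocToGlob} by translating the trajectory of $\{X_t\}$ into an element of the set $\mathcal{S}_{B',T}$ for a suitable constant $B'$, and then invoking the definition of the well-covering time. Concretely, for a run of length $T$ let $\kappa(i) = \frac{1}{T}\kappa_i(T)$ be the (normalized) fraction of time spent in $\Omega_i$, and let $N(i,j) = \frac{1}{T} N_{i,j}(T)$ be the (normalized) number of $\Omega_i\to\Omega_j$ transitions before time $T$. The first step is to check the deterministic/combinatorial constraints: $\sum_i \kappa(i) = 1$ always holds, and $\sum_{i,j} N(i,j)$ differs from $1$ by at most $1/T$ (there are $T$ steps, each of which is either a within-part step or a between-part step, so the transition counts sum to at most $T$; the discrepancy is the number of lazy/within-part steps, but after normalizing and using $|\sum_j N(i,j)-\kappa(i)|$ style bookkeeping one gets the $O(1/T)$ slack demanded in \eqref{ineqWellCovDef}). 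Also $|\sum_j N_{i,j}(T) - \kappa_i(T)| \le 1$ and $|\sum_i N_{i,j}(T) - \kappa_j(T)| \le 1$ hold pathwise, since the number of entries into $\Omega_j$ and exits from $\Omega_j$ differ by at most one from the number of visits — these give the second line of \eqref{ineqWellCovDef} exactly.

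The second, and main, step is the probabilistic estimate: I must show that with probability at least $1-\epsilon$, the pair $(\kappa,N)$ constructed above lies in $\mathcal{S}_{B',T}$ with $B' = \sqrt{8\varphi_{\max}\log(8n^2 T/\epsilon)}$, i.e.\ that
\be
\bigl|N(i,j) - \kappa(i)\overline{K}(i,j)\bigr| \le \frac{B'\sqrt{\kappa(i)}}{\sqrt{T}}, \qquad
\bigl|N(i,j) - \kappa(j)\overline{K}(j,i)\bigr| \le \frac{B'\sqrt{\kappa(j)}}{\sqrt{T}}
\ee
simultaneously for all $i,j$. This is exactly where Lemma~\ref{LemSpreadBound} is used. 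Writing $t = \kappa_i(T)$, so that $\kappa_i^{-1}(t) \le T \le \kappa_i^{-1}(t)+(\text{gap})$, the quantity $\frac{1}{t+1}N_{i,j}(\kappa_i^{-1}(t))$ is within $O(1/t)$ of $N(i,j)/\kappa(i)$, and Lemma~\ref{LemSpreadBound} with the choice $c = B'/\sqrt{t+1} = \sqrt{8\varphi_{\max}\log(8n^2T/\epsilon)/(t+1)}$ gives that $\bigl|\frac{1}{t+1}N_{i,j}(\kappa_i^{-1}(t)) - \overline{K}(i,j)\bigr| \le c$ with probability at least $1 - 4e^{-c^2(t+1)/(8\varphi_{\max})} = 1 - 4\cdot\frac{\epsilon}{8n^2T} = 1 - \frac{\epsilon}{2n^2T}$. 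Multiplying through by $\kappa(i) = (t\text{ or }t+1)/T$ converts the additive-$c$ bound on the ratio into the additive-$\tfrac{B'\sqrt{\kappa(i)}}{\sqrt T}$ bound on $|N(i,j)-\kappa(i)\overline{K}(i,j)|$ required by \eqref{ineqWellCovDef}; the symmetric bound comes from \eqref{IneqSpreadBound2}. A subtlety here is that $t = \kappa_i(T)$ is itself random, so one should either union-bound over the at most $T$ possible values of $t \in \{0,1,\ldots,T\}$ (this is where the extra factor of $T$ inside the logarithm comes from), or note that the tail bound in Lemma~\ref{LemSpreadBound} is monotone in a way that lets one dominate uniformly; I would take the union-bound route for cleanliness.

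The third step is assembling the union bound: there are $n^2$ ordered pairs $(i,j)$, two inequalities per pair, and a factor of $T$ for the possible values of $t$, so the total failure probability is at most $n^2 \cdot 2 \cdot T \cdot \frac{4\epsilon}{8n^2T} = \epsilon$ — matching the stated bound; the constant $8$ inside the log is chosen precisely so this bookkeeping closes. On the complementary event of probability $\ge 1-\epsilon$, the pair $(\kappa,N)$ lies in $\mathcal{S}_{B',T}$. By hypothesis $T > \tau_{\mathrm{wc}}(B\varphi_1,\ldots,B\varphi_n,B')$, so the defining property of the well-covering time forces $\kappa(i) > \frac{B\varphi_i}{T}$, i.e.\ $\kappa_i(T) > B\varphi_i$, for every $i\in[n]$ — that is, $\max_{i\in[n]}\frac{\kappa_i(T)}{\varphi_i} \ge B$ — on this event. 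Taking complements gives $\P_x[\max_i \kappa_i(T)/\varphi_i < B] \le \epsilon$, uniformly in $x$, which is the claim. I expect the main obstacle to be the careful handling of the off-by-one and laziness discrepancies between $T$, $\kappa_i^{-1}(t)$, $\kappa_i(T)$ and the transition counts, together with making the random index $t$ rigorous in the union bound; the probabilistic heavy lifting is entirely outsourced to Lemma~\ref{LemSpreadBound} and the concentration result of \cite{Paul14} that underlies it.
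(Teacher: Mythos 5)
Your proposal is correct and follows essentially the same route as the paper: form the empirical pair $(\kappa_i(T)/T,\,N_{i,j}(T)/T)$, use Lemma \ref{LemSpreadBound} together with a union bound over the $n^2$ pairs, both orientations, and the at most $T$ possible values of the random occupation count (which is exactly what the factor $T$ inside the logarithm is for) to place this pair in $\mathcal{S}_{B',T}$ with $B' = \sqrt{8\varphi_{\max}\log(8n^2T/\epsilon)}$ except on an event of probability at most $\epsilon$, and then invoke the definition of $\tau_{\mathrm{wc}}$ to force $\kappa_i(T) > B\varphi_i$ for every $i$. The off-by-one and laziness bookkeeping you flag is handled just as loosely in the paper's own argument, so there is no substantive gap.
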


\begin{proof}
Fix $T \in \mathbb{N}$, $X_{0} = x \in \Omega$ and $\epsilon > 0$. For $1 \leq i \neq j \leq n$, denote by $\mathcal{A}_{i,j}$ the event that 
\be 
|N_{i,j}(T) - \kappa_{i}(T) \overline{K}(i,j) | > \sqrt{8 \varphi_{\max} \log( \frac{8n^{2} T}{ \epsilon})\kappa_{i}(T)}
\ee 
and denote by $\mathcal{B}_{i,j}$ the event that
\be 
|N_{i,j}(T) - \kappa_{j}(T) \overline{K}(j,i) | > \sqrt{8 \varphi_{\max} \log( \frac{8n^{2} T}{ \epsilon})\kappa_{i}(T)}.
\ee 
By Lemma \ref{LemSpreadBound},
\be 
\P_{x}[\mathcal{A}_{i,j}] &= \P_{x}[|N_{i,j}(T) -  \kappa_{i}(T) \overline{K}(i,j) | > \sqrt{8 \varphi_{\max} \log( \frac{8 n^{2} T}{\epsilon}) \kappa_{i}(T)} ] \\
&= \sum_{t=0}^{T} \P_{x}[|N_{i,j}(T) -  \kappa_{i}(T) \overline{K}(i,j) | > \sqrt{8 \varphi_{\max} \log( \frac{8 n^{2} T}{\epsilon})\kappa_{i}(T)} | \kappa_{i}(T) = t] \P_{x}[\kappa_{i}(T) = t] \\
&= \sum_{t=0}^{T} \P_{x}[ | \frac{1}{t}  N_{i,j}(\kappa_{i}^{-1}(t)) -  \overline{K}(i,j)| > \frac{ \sqrt{8 \varphi_{\max} \log(\frac{8 n^{2} T}{\epsilon})}}{\sqrt{t}} | \kappa_{i}(T) = t]  \P_{x}[\kappa_{i}(T) = t] \\
&\leq \P_{x}[ \max_{0 \leq t \leq T} \sqrt{t} | \frac{1}{t}  N_{i,j}(\kappa_{i}^{-1}(t)) -  \overline{K}(i,j)|  > \sqrt{8 \varphi_{\max} \log(\frac{8 n^{2} T}{\epsilon})}] \\
&\leq  \frac{\epsilon}{2n^{2}}.
\ee 
Taking a union bound yields
 $\P_{x}[\cup_{1 \leq i \neq j \leq n} \mathcal{A}_{i,j}] \leq {\epsilon \over 2}$ and thus $\P_{x}[\cap_{1 \leq i \neq j \leq n} \mathcal{A}^c_{i,j}] > 1- {\epsilon \over 2}$. The same calculation implies that $\P_{x}[\cap_{1 \leq i \neq j \leq n} \mathcal{B}^c_{i,j}] >1 - \frac{\epsilon}{2}$. By construction, we also have
\be 
| \sum_{i} N_{i,j}(T) - \kappa_{j}(T) |, \, | \sum_{j} N_{i,j}(T) - \kappa_{i}(T) | \leq 1. 
\ee 
Set
\be
\tilde{N}(i,j) = {N_{i,j}(T) \over T}, \, \tilde{\kappa}(i) = {\kappa_i(T) \over T}.
\ee
We have shown that, with at least $1-{\epsilon\over 2}$ probability, the pair $(\tilde{\kappa},\tilde{N})$ belongs to the set $\mathcal{S}_{B',T}$ associated with the kernel $\bar{K}$ and constant $B' = \sqrt{8 \varphi_{\max} \log( \frac{8 n^{2} T}{\epsilon})}$. 
 Thus, by the definition of the well-covering time, for $T > \tau_{\mathrm{wc}}(B \varphi_{1},\ldots, B \varphi_{n}, \sqrt{8 \varphi_{\max} \log( \frac{8 n^{2} T}{\epsilon})})$, with at least  $1- {\epsilon \over 2}$ probability, 
 $\tilde{\kappa}(i) > {\varphi_i \over B}$. This immediately yields that 
\be 
\P_{x}[\max_{i \in [n]} \frac{\kappa_{i}(T)}{\varphi_{i}} < B] \leq \frac{\epsilon}{2}.
\ee 
Since $x \in \Omega$ was arbitrary, the proof is finished.
\end{proof}

We finally give
\begin{proof} [Proof of Theorem \ref{ThmMainBootstrapBound}]
By Lemma \ref{LemmaLocToGlob}, for $T > \tau_{\mathrm{wc}} ( 8 c_{\gamma}' \varphi_{1}', \ldots, 8 c_{\gamma}' \varphi_{n}',  \sqrt{8 \varphi_{\max} \log(64n^{2} T)})$, 
\be 
\max_{x \in \Omega} \P_{x}[\max_{i \in I} \frac{\kappa_{i}(T)}{\varphi_{i}} < 8  c_{\gamma}'] \leq \frac{1}{8}.
\ee 
Thus, by Lemma \ref{LemmaBasicMixing},
\be 
\tmix \leq \frac{4}{3} c_{\alpha} T
\ee 
and the proof is finished. 
\end{proof}

Combining inequality \eqref{IneqFirstWellCoveringBound} with Theorem \ref{ThmMainBootstrapBound}, we see that if a Markov chain $\{X_{t} \}_{t \in \mathbb{N}}$ has $Q$ as its projected chain, and the mixing times of all restricted chains are less than $\varphi_{\max}$, the mixing time of $\{ X_{t} \}_{t \in \mathbb{N}}$ satisfies
\be \label{EqConclusionWellCoveringSimpleExample}
\frac{\tmix}{\log(\tmix)} = O( n \log(n) \Delta^{2} D^{2} \varphi_{\max}).
\ee 

\subsection{Comparison inequalities for well-covering times} \label{SubsecCompWellCov}

Like the spectral gap, the covering time can be difficult to bound for generic Markov chains. One of the main tools for obtaining quantitative bounds on the spectral gap of a Markov chain is the use of comparison theorems to relate complicated chains of interest to simpler chains that can be analyzed directly (see \textit{e.g.,} \cite{DiSa93b, DGJM06}). In this section, we give some basic comparison results for the well-covering time, for the same purpose. The following `scaling' bounds are immediate: 
\begin{itemize}
\item For any $\alpha > 1$, and any $t_{1},\ldots,t_{n},B$,
\be \label{IneqCovScale1}
\tau_{\mathrm{wc}}(\alpha t_{1},\ldots, \alpha t_{n}, B) \leq \alpha \tau_{\mathrm{wc}}(t_{1},\ldots,t_{n}, B).
\ee
\item For any $\alpha > 1$, $B >0$ and $T \in \mathbb{N}$, 
\be 
\mathcal{S}_{\alpha B, \alpha^{2} T} \subset \mathcal{S}_{B,T},
\ee 
and so for any $t_{1},\ldots,t_{n}$, 
\be \label{IneqCovScale2}
\tau_{\mathrm{wc}}(t_{1},\ldots,  t_{n}, \alpha B) \leq \alpha^{2} \tau_{\mathrm{wc}}(t_{1},\ldots,t_{n}, B).
\ee

\end{itemize}
The next result shows that the well-covering time of a complicated kernel can be bounded in terms of the well-covering time of simpler kernels:

\begin{lemma} \label{LemmaMonotone}
Let $Q,Q'$ be two reversible kernels on $[n]$ with the same stationary measure $\mu$, and let $\tau_{\mathrm{wc}}$, $\tau_{\mathrm{wc}}'$ be their well-covering times. Assume that $Q(i,j) \geq Q'(i,j)$ for all $j \neq i$. Then for any sequence $t_{1},\ldots,t_{n},B$, 
\be \label{IneqMonotonicity}
\tau_{\mathrm{wc}}(t_{1},\ldots,t_{n},B) \leq 9 \tau_{\mathrm{wc}}'(t_{1},\ldots,t_{n},B).
\ee 
\end{lemma}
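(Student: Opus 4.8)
The plan is to show that $\mathcal{S}_{B,T}$ (the set associated with $Q$) is contained in $\mathcal{S}'_{B, T'}$ (the set associated with $Q'$) for $T' = \frac{1}{9} T$, which by the definition of the well-covering time immediately yields $\tau_{\mathrm{wc}}(t_1,\ldots,t_n,B) \leq 9\, \tau'_{\mathrm{wc}}(t_1,\ldots,t_n,B)$. Indeed, if every $(\kappa,N)$ that is ``plausible'' for $Q$ over $T$ steps is also plausible for $Q'$ over $T/9$ steps, and if such $(\kappa,N)$ forces $\kappa(i) > t_i/(T/9)$ (from $\tau'_{\mathrm{wc}}$), then certainly $\kappa(i) > t_i/T$. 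So the theorem reduces to a containment of constraint sets, and the factor $9 = 3^2$ is exactly the slack we expect to need when translating the square-root error bounds: the defining inequalities \eqref{ineqWellCovDef} have $\sqrt{T}$ in the denominator, so replacing $T$ by $T/9$ inflates the right-hand side by a factor of $3$.

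\medskip
\textbf{Key steps.} First I would fix $T$ and take an arbitrary $(\kappa,N) \in \mathcal{S}_{B,T}$; I must verify that $(\kappa,N)$ satisfies the defining inequalities \eqref{ineqWellCovDef} for $Q'$ with $B$ unchanged and $T$ replaced by $T/9$. The normalization constraints $\sum_j \kappa(j) = 1$, $\sum_{i,j} N(i,j) = 1$ carry over verbatim. The second line of \eqref{ineqWellCovDef}, $|\sum_i N(i,j) - \kappa(j)| \leq 1/T$ and $|\sum_j N(i,j) - \kappa(i)| \leq 1/T$, is a statement only about $N$ and $\kappa$ (it does not mention the kernel), so it holds a fortiori with $T$ replaced by the smaller $T/9$ — wait, it needs $\leq 9/T$, which is \emph{weaker}, so it is fine. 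The crux is the first line: for $Q'$ we need
\be
|N(i,j) - \kappa(i) Q'(i,j)| \leq \frac{3 B \sqrt{\kappa(i)}}{\sqrt{T}}, \qquad |N(i,j) - \kappa(j) Q'(j,i)| \leq \frac{3 B \sqrt{\kappa(i)}}{\sqrt{T}}
\ee
for $i \neq j$, given the corresponding bounds for $Q$. Here I would use $0 \leq Q(i,j) - Q'(i,j)$ together with a bound of the form $Q(i,j) - Q'(i,j) \leq$ (something controllable); the natural move is the triangle inequality
\be
|N(i,j) - \kappa(i) Q'(i,j)| \leq |N(i,j) - \kappa(i) Q(i,j)| + \kappa(i)\,(Q(i,j) - Q'(i,j)) \leq \frac{B\sqrt{\kappa(i)}}{\sqrt{T}} + \kappa(i)\,(Q(i,j) - Q'(i,j)).
\ee
For this to be at most $3B\sqrt{\kappa(i)}/\sqrt{T}$ it suffices that $\kappa(i)(Q(i,j)-Q'(i,j)) \leq 2B\sqrt{\kappa(i)}/\sqrt{T}$, i.e. $\sqrt{\kappa(i)}\,(Q(i,j)-Q'(i,j)) \leq 2B/\sqrt{T}$. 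This is not automatic from the hypotheses alone, so the real work is to \emph{derive} a bound on $\kappa(i)(Q(i,j)-Q'(i,j))$ from the fact that $(\kappa,N)\in\mathcal{S}_{B,T}$. The idea: since $N(i,j) \geq \kappa(i)Q(i,j) - B\sqrt{\kappa(i)}/\sqrt{T}$ and also $N(i,j) \leq \kappa(j)Q(j,i) + B\sqrt{\kappa(j)}/\sqrt{T}$... no — the cleaner route is that $N(i,j) \geq 0$ and $N(i,j) \leq \kappa(i)Q(i,j)+B\sqrt{\kappa(i)}/\sqrt{T}$ give, combined with the time-reversal constraint $|N(i,j) - \kappa(j)Q(j,i)| \leq B\sqrt{\kappa(i)}/\sqrt{T}$ and detailed balance $\mu(i)Q(i,j) = \mu(j)Q(j,i)$, $\mu(i)Q'(i,j)=\mu(j)Q'(j,i)$, a bound forcing $\kappa$ roughly proportional to $\mu$ up to the square-root errors, and then $Q,Q'$ sharing $\mu$ lets one trade $Q$ for $Q'$. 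I would carry this out carefully, most likely first establishing an auxiliary estimate like $\kappa(i) \leq 4\mu(i)$ (or $\kappa(i) \leq C$ for a small constant times the analogous lower bound), valid for all $(\kappa,N)\in\mathcal{S}_{B,T}$ once $T$ is at least the well-covering time — but that would make the argument circular, so instead I'd look for a bound on $\sqrt{\kappa(i)}(Q(i,j)-Q'(i,j))$ that holds unconditionally.

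\medskip
\textbf{Cleaner line of attack for the crux.} On reflection, here is the argument I expect to work. From $(\kappa,N)\in\mathcal{S}_{B,T}$ and $N(i,j)\geq 0$ we get $\kappa(i)Q(i,j) \leq N(i,j) + B\sqrt{\kappa(i)}/\sqrt{T}$. Separately, from the time-reversal bound $N(i,j) \leq \kappa(j)Q(j,i) + B\sqrt{\kappa(i)}/\sqrt{T}$. Hmm, this bounds $N(i,j)$ in terms of $\kappa(j)Q(j,i)$ but with error scaled by $\sqrt{\kappa(i)}$, which is the awkward asymmetry built into the definition. The honest approach: I would show directly that any $(\kappa,N) \in \mathcal{S}_{B,T}$ lies in $\mathcal{S}'_{B, T/9}$ by checking the $Q'$-inequalities term by term, and the only nontrivial point is absorbing the extra term $\kappa(i)(Q(i,j)-Q'(i,j))$ into the enlarged error budget $3B\sqrt{\kappa(i)}/\sqrt{T} - B\sqrt{\kappa(i)}/\sqrt{T} = 2B\sqrt{\kappa(i)}/\sqrt{T}$; for this I claim $\kappa(i)(Q(i,j)-Q'(i,j)) \leq \kappa(i)Q(i,j) \leq N(i,j) + B\sqrt{\kappa(i)}/\sqrt{T}$, and then bound $N(i,j) \leq \kappa(j)Q(j,i) + B\sqrt{\kappa(i)}/\sqrt{T} \leq \kappa(j) + B\sqrt{\kappa(i)}/\sqrt{T}$; combined with the auxiliary fact (from summing the constraints) that $\kappa(j) \leq \kappa(i) + $ small, one gets the needed smallness only when $\kappa(i)$ itself is small, i.e. $\kappa(i) \lesssim B^2/T$, which is the regime that matters. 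I would make this precise with a short case analysis: if $\kappa(i) \leq 100 B^2/T$ then $\kappa(i)(Q(i,j)-Q'(i,j)) \leq \kappa(i) \leq 10 B\sqrt{\kappa(i)}/\sqrt{T}$ — too lossy — so instead the clean statement is that the \emph{only} coordinates whose lower bound $\kappa(i) > t_i/T$ we care about are handled, and for those the $\kappa$'s are bounded below, hence $\sqrt{\kappa(i)}$ is not tiny, hence... Given the delicacy here, \textbf{the main obstacle is exactly this absorption step}: making the extra kernel-difference term fit inside the tripled error bound uniformly over $\mathcal{S}_{B,T}$, which I expect to require both the reversibility/common-stationary-measure hypothesis and a short self-bootstrapping estimate on $\kappa$ derived from the second line of \eqref{ineqWellCovDef}. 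Everything else — the reduction to set containment, the scaling of $T$, the normalization and marginal constraints — is routine.
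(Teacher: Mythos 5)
Your reduction to the containment $\mathcal{S}_{B,T}\subset\mathcal{S}'_{B,T/9}$ is where the argument breaks: that containment is false in general, and the ``absorption step'' you yourself flag as the main obstacle cannot be carried out, because the term $\kappa(i)\,(Q(i,j)-Q'(i,j))$ does not shrink as $T$ grows while the enlarged budget $3B\sqrt{\kappa(i)}/\sqrt{T}$ does. Concretely, take $\kappa=\mu$ and $N(i,j)=\mu(i)Q(i,j)$; by detailed balance this pair lies in $\mathcal{S}_{B,T}$ for every $T$, yet if $Q(i,j)-Q'(i,j)=c>0$ with $\mu(i)$ bounded below, then $|N(i,j)-\kappa(i)Q'(i,j)|=\mu(i)c$ exceeds $3B\sqrt{\mu(i)}/\sqrt{T}$ once $T$ is large, so the pair is not in $\mathcal{S}'_{B,T/9}$. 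The same failure occurs for the pairs that actually matter (those violating the covering condition): such a pair has one small coordinate of $\kappa$ but can have $\kappa$ of order $\mu$ elsewhere, and no self-bootstrapping estimate from the second line of \eqref{ineqWellCovDef} can make $\sqrt{\kappa(i)}\,(Q(i,j)-Q'(i,j))$ small uniformly. So with $N$ held fixed the approach cannot work.

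The missing idea in the paper's proof is to modify $N$, not (only) $T$: given $(\kappa,N)\in\mathcal{S}_{B,T}$ with $\min_i\kappa(i)/t_i<1/T$, set $\kappa'=\kappa$ and $N'(i,j)=N(i,j)-\kappa(i)\,(Q(i,j)-Q'(i,j))$ for $i\neq j$, with $N'(i,i)$ fixed by the row sum. Then $N'(i,j)-\kappa(i)Q'(i,j)=N(i,j)-\kappa(i)Q(i,j)$, so the first constraint for $Q'$ holds with the same $B$; for the reversed constraint one writes $N'(i,j)-\kappa(j)Q'(j,i)=\bigl(N(i,j)-\kappa(i)Q(i,j)\bigr)+\bigl(\kappa(i)Q'(i,j)-\kappa(j)Q'(j,i)\bigr)$ and bounds the second bracket by $2B\sqrt{\kappa(i)}/\sqrt{T}$, using the two original constraints (which give $|\kappa(i)Q(i,j)-\kappa(j)Q(j,i)|\leq 2B\sqrt{\kappa(i)}/\sqrt{T}$), detailed balance for both kernels, and $Q'(j,i)\leq Q(j,i)$. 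Since $\kappa'=\kappa$, the new pair still witnesses failure of covering, which yields $\tau_{\mathrm{wc}}(t_1,\ldots,t_n,B)\leq\tau'_{\mathrm{wc}}(t_1,\ldots,t_n,3B)$; the factor $9$ then comes from the scaling inequality \eqref{IneqCovScale2}, $\tau'_{\mathrm{wc}}(t_1,\ldots,t_n,3B)\leq 9\,\tau'_{\mathrm{wc}}(t_1,\ldots,t_n,B)$. So your instinct that $9=3^2$ reflects the $B$-versus-$\sqrt{T}$ scaling is right, but the factor $3$ is spent on the time-reversal constraint after modifying $N$ --- and this is also exactly where the reversibility and common-stationary-measure hypotheses, which your sketch invokes but never actually uses, enter the proof.
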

\begin{proof}
We begin by showing that, under the same assumptions,
\be \label{IneqMonotonicity}
\tau_{\mathrm{wc}}(t_{1},\ldots,t_{n},B) \leq \tau_{\mathrm{wc}}'(t_{1},\ldots,t_{n},3B).
\ee 
Let $\mathcal{S}_{B,T}$ and $\mathcal{S}_{B,T}'$ denote the pairs $(\kappa,N)$ that satisfy inequalities \eqref{ineqWellCovDef} for the kernels $Q,Q'$ respectively. To prove inequality \eqref{IneqMonotonicity}, it is enough to find, for any pair $(\kappa, N) \in \mathcal{S}_{B,T}$ that satisfies $\min_{i} \frac{\kappa(i)}{t_{i}} < \frac{1}{T}$, some pair $(\kappa', N') \in \mathcal{S}_{3B,T}'$ that satisfies $\min_{i} \frac{\kappa'(i)}{t_{i}} < \frac{1}{T}$. \\
We now give such a construction. Set 
\be \kappa'(i) = \kappa(i) 
\ee 
for all $1 \leq i \leq n$, set 
\be 
N'(i,j) = N(i,j) - \kappa(i)(Q(i,j) - Q'(i,j))
\ee 
for all $1 \leq i \neq j \leq n$, and finally set
\be 
N'(i,i) = \kappa'(i) - \sum_{j \neq i} N'(i,j).
\ee 
Since $\min_{i} \frac{\kappa(i)}{t_{i}} < \frac{1}{T}$, it is clear that $\min_{i} \frac{\kappa'(i)}{t_{i}} < \frac{1}{T}$. Thus, it just remains to check that $(\kappa', N') \in \mathcal{S}_{B,T}'$ by confirming that they satisfy both parts of inequality \eqref{ineqWellCovDef}. To check the first part of line 1 of inequality \eqref{ineqWellCovDef}, 
\be 
|N'(i,j) - \kappa'(i) Q'(i,j) | &= |N(i,j) - \kappa(i)(Q(i,j) - Q'(i,j)) - \kappa(i) Q'(i,j)| \\
&= |N(i,j) - \kappa(i) Q(i,j)| \\
&< \frac{B \sqrt{\kappa(i)}}{\sqrt{T}} =\frac{B \sqrt{\kappa'(i)}}{\sqrt{T}}. \\
\ee 
To check the second part of line 1 of inequality \eqref{ineqWellCovDef}, note that by reversibility and then the first part of inequality \eqref{ineqWellCovDef},
\be 
Q(j,i) | \kappa(i) \frac{\mu(j)}{\mu(i)} - \kappa(j) | = |\kappa(i) Q(i,j) - \kappa(j) Q(j,i) | \leq  \frac{2 B \sqrt{\kappa(i)}}{\sqrt{T}}.
\ee 
Thus,
\be 
Q'(j,i) | \kappa'(i) \frac{\mu(j)}{\mu(i)} - \kappa'(j) | \leq \frac{2 B \sqrt{\kappa'(i)}}{\sqrt{T}}.
\ee 
We conclude that
\be 
|N'(i,j) - \kappa'(j) Q'(j,i) | &= |N(i,j) - \kappa(i)(Q(i,j) - Q'(i,j)) - \kappa(j) Q'(j,i) | \\
&= |N(i,j) - \kappa(i) Q(i,j) + \kappa(i) Q'(i,j) - \kappa(j) Q'(j,i) | \\
&\leq  |N(i,j) - \kappa(i) Q(i,j)| + Q'(j,i) | \kappa'(i) \frac{\mu(j)}{\mu(i)} - \kappa'(j) |  \leq \frac{3 B \sqrt{\kappa'(i)}}{\sqrt{T}}.
\ee 
The second part of inequality \eqref{ineqWellCovDef} is immediate. This completes the proof of inequality \eqref{IneqMonotonicity}; the result now follows from inequality \eqref{IneqCovScale2}.
\end{proof}

In the other direction, making a chain lazier cannot greatly impact the well-covering time. This requires an intermediate lemma; we give an abbreviated proof, as the details may be checked exactly as in the proof of Lemma \ref{IneqMonotonicity}:  

\begin{lemma} [Well-Behaved Covering Set] \label{LemmaBehaving}
Define 
\be \label{EqDefNiceCovColl}
\mathcal{R}_{B, T} \equiv \{ (\kappa, N) \in \mathcal{S}_{B, T} \, : \, \max_{i \neq j} \frac{N(i,j)}{Q(i,j)} \leq 1 \}. 
\ee
Then
\be 
\min  \Big \{ T > 0 \, : \, \forall \,& (\kappa, N) \in \mathcal{S}_{B,T}, \, \forall \, i \in [n], \, \kappa(i) > \frac{t_{i}}{T} \Big \} \\
&= \min \Big \{ T > 0 \, : \, \forall \, (\kappa, N) \in \mathcal{R}_{B,T}, \, \forall \, i \in [n], \, \kappa(i) > \frac{t_{i}}{T} \Big \}.
\ee 
\end{lemma}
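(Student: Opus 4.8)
The plan is to prove the two halves of the asserted identity separately, writing $\tau_{\mathrm{wc}}$ for the left-hand minimum (which is just the well-covering time of $Q$) and $\tau_{\mathrm{wc}}^{\mathcal{R}}$ for the right-hand minimum. One inequality is immediate: since $\mathcal{R}_{B,T} \subseteq \mathcal{S}_{B,T}$ for every $T$, any threshold $T$ for which every pair in $\mathcal{S}_{B,T}$ satisfies $\kappa(i) > t_i/T$ for all $i$ is in particular a threshold for which this holds over $\mathcal{R}_{B,T}$, so taking minima gives $\tau_{\mathrm{wc}}^{\mathcal{R}} \le \tau_{\mathrm{wc}}$.

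For the reverse inequality $\tau_{\mathrm{wc}} \le \tau_{\mathrm{wc}}^{\mathcal{R}}$ I would argue by contraposition: fix $T$ such that every $(\kappa,N) \in \mathcal{R}_{B,T}$ has $\kappa(i) > t_i/T$ for all $i$, and show the same must then hold for every $(\kappa,N) \in \mathcal{S}_{B,T}$. Given $(\kappa, N) \in \mathcal{S}_{B,T}$, I would build a companion pair $(\kappa, N') \in \mathcal{R}_{B,T}$ with the \emph{same} first coordinate $\kappa$, following the recipe in the proof of Lemma \ref{LemmaMonotone}: truncate each off-diagonal entry, $N'(i,j) = \min(N(i,j), Q(i,j))$ for $i \ne j$, and restore the diagonal by $N'(i,i) = \kappa(i) - \sum_{j \ne i} N'(i,j)$. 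Then by construction $\max_{i \ne j} N'(i,j)/Q(i,j) \le 1$, the $i$-th row sum of $N'$ equals $\kappa(i)$ exactly (hence $\sum_{i,j} N'(i,j) = 1$ and one of the two marginal inequalities in \eqref{ineqWellCovDef} holds trivially), and it remains only to verify the remaining inequalities of \eqref{ineqWellCovDef} for $(\kappa, N')$. The key observation is that $N'(i,j)$ always lies between $\kappa(i)Q(i,j)$ and $N(i,j)$ — indeed when the truncation is active we have $\kappa(i)Q(i,j) \le Q(i,j) = N'(i,j) < N(i,j)$ since $\kappa(i) \le 1$ — and the truncation never removes more than the slack $\frac{B\sqrt{\kappa(i)}}{\sqrt T}$ already built into \eqref{ineqWellCovDef}, because $N(i,j) - Q(i,j) \le N(i,j) - \kappa(i)Q(i,j) \le \frac{B\sqrt{\kappa(i)}}{\sqrt T}$. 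This keeps $N'(i,j)$ within the permitted distance of $\kappa(i)Q(i,j)$, and of $\kappa(j)Q(j,i)$ after invoking reversibility exactly as in the proof of Lemma \ref{LemmaMonotone}. Once $(\kappa, N') \in \mathcal{R}_{B,T}$ is established, the hypothesis on $T$ forces $\kappa(i) > t_i/T$ for all $i$; since $(\kappa,N) \in \mathcal{S}_{B,T}$ was arbitrary, $T$ has the defining property of the left-hand minimum, and minimizing over such $T$ yields $\tau_{\mathrm{wc}} \le \tau_{\mathrm{wc}}^{\mathcal{R}}$. Combining the two inequalities gives the claimed equality.

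The step I expect to be the main obstacle — and exactly the one the statement defers to ``checked exactly as in the proof of Lemma \ref{LemmaMonotone}'' — is the verification that the companion pair $(\kappa, N')$ still lies in $\mathcal{S}_{B,T}$, in particular that the column-marginal constraint $|\sum_i N'(i,j) - \kappa(j)| \le 1/T$ and the cross-constraint $|N'(i,j) - \kappa(j)Q(j,i)| \le \frac{B\sqrt{\kappa(i)}}{\sqrt T}$ survive the truncation: truncating an off-diagonal entry perturbs a column sum, and this perturbation must be absorbed against the diagonal correction via the detailed-balance identity $\mu(i)Q(i,j) = \mu(j)Q(j,i)$ — precisely the bookkeeping carried out in the proof of Lemma \ref{LemmaMonotone}. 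Everything else is routine once the inclusion $\mathcal{R}_{B,T} \subseteq \mathcal{S}_{B,T}$ and the squeezing observation above have been recorded.
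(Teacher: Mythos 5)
Your argument is the paper's proof: the containment $\mathcal{R}_{B,T}\subseteq\mathcal{S}_{B,T}$ gives one inequality, and the other is obtained by the identical truncation map $F_{1}(\kappa)=\kappa$, $F_{2}(N)(i,j)=\min(N(i,j),Q(i,j))$ for $i\neq j$ with the diagonal restored from the row sum, which carries any pair in $\mathcal{S}_{B,T}$ violating $\kappa(i)>t_{i}/T$ to such a pair in $\mathcal{R}_{B,T}$, and the paper likewise defers the verification that this map lands in $\mathcal{R}_{B,T}$ to the bookkeeping of Lemma \ref{LemmaMonotone}. Your write-up is in fact slightly more detailed than the paper's (your diagonal convention $N'(i,i)=\kappa(i)-\sum_{j\neq i}N'(i,j)$ is the one that preserves $\sum_{i,j}N'(i,j)=1$), and the only step you assert without full proof --- that the truncated entry stays within $B\sqrt{\kappa(i)}/\sqrt{T}$ of $\kappa(j)Q(j,i)$, where the triangle inequality alone gives only $2B\sqrt{\kappa(i)}/\sqrt{T}$, and that the column-marginal tolerance $1/T$ survives --- is exactly the step the paper's abbreviated proof also leaves unchecked.
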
 
\begin{proof}
Since $\mathcal{R}_{B,T} \subset \mathcal{S}_{B,T}$, it is clear that the right-hand side is at least as large as the left-hand side. To prove the reverse inequality, we define a map $F = (F_{1}, F_{2})$ from $(\kappa, N) \in \mathcal{S}_{B,T}$ to $\mathcal{R}_{B,T}$ by setting $F_{1}(\kappa) = \kappa$, $F_{2}(N)(i,j) = \min (N(i,j), Q(i,j))$ for $i \neq j$ and then $F_{2}(N)(i,i) = 1 - \sum_{j \neq i} F_{2}(N)(i,j)$. This map sends elements of  $ \mathcal{S}_{B,T}$ to $\mathcal{R}_{B,T}$. Also, if $\min_{i} \frac{\kappa(i)}{t_{i}} < \frac{1}{T}$, then $\min_{i} \frac{F_{1}(\kappa)(i)}{t_{i}} < \frac{1}{T}$. This completes the proof.
\end{proof}

The following  result goes in the `opposite direction' from Lemma \ref{LemmaMonotone}: 
\begin{lemma} [Laziness and Well-Covering Times] \label{LemmaLazy}
Let $Q$ be a $\frac{1}{2}$-lazy reversible kernel with stationary measure $\mu$, let $\mathrm{Id}$ be the identity kernel, let $0 < \alpha < 1$, and let $Q' = \alpha Q + (1-\alpha) \mathrm{Id}$.  Let $\tau_{\mathrm{wc}}$, $\tau_{\mathrm{wc}}'$ be the well-covering times of $Q,Q'$.  Then for any sequence $t_{1},\ldots,t_{n},B$, 
\be \label{IneqLaziness}
\tau_{\mathrm{wc}}'(t_{1},\ldots,t_{n},B) \leq \alpha^{-2} \tau_{\mathrm{wc}}(t_{1},\ldots,t_{n},B).
\ee 
\end{lemma}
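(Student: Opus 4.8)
The plan is to compare the defining sets $\mathcal{S}_{B,T}$ for the kernels $Q'$ and $Q$ directly, in the spirit of the proof of Lemma \ref{LemmaMonotone}. Fix $T$ and a pair $(\kappa, N') \in \mathcal{S}_{B, \alpha^2 T}$ associated with $Q'$ for which $\min_i \frac{\kappa(i)}{t_i} < \frac{1}{\alpha^2 T}$; the goal is to produce a pair $(\kappa, N) \in \mathcal{S}_{B, T}$ associated with $Q$ with the same $\kappa$ (so the occupation-measure violation is inherited automatically, since $\frac{1}{\alpha^2 T} \le \frac{1}{T}$ after rescaling appropriately). This will give $\tau_{\mathrm{wc}}'(t_1,\ldots,t_n,B) \le \alpha^2 \tau_{\mathrm{wc}}(\alpha^{-2} t_1, \ldots)$ — or, combined with the scaling bound \eqref{IneqCovScale1}, the claimed $\alpha^{-2}$ factor. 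Actually the cleaner route: show $\mathcal{S}_{B, \alpha^{-2} T}$ for $Q'$ is contained in a suitable rescaling of $\mathcal{S}_{B,T}$ for $Q$, then read off \eqref{IneqLaziness} exactly as \eqref{IneqCovScale2} was deduced.

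The construction of $N$ from $N'$ should invert the laziness operation. Since $Q' = \alpha Q + (1-\alpha)\mathrm{Id}$, a transition count for $Q'$ that stays put is a mixture of a genuine $Q$-self-loop and a ``$Q$ would have stayed'' event; the off-diagonal counts satisfy $N'(i,j) \approx \kappa(i) Q'(i,j) = \alpha \kappa(i) Q(i,j)$, so it is natural to set $N(i,j) = \alpha^{-1} N'(i,j)$ for $i \ne j$ and $N(i,i) = \kappa(i) - \sum_{j \ne i} N(i,j)$. One then checks the two lines of \eqref{ineqWellCovDef} for $(\kappa, N)$ against $Q$ at time-parameter $\alpha^2 T$: the first line becomes $|\alpha^{-1} N'(i,j) - \kappa(i) Q(i,j)| = \alpha^{-1}|N'(i,j) - \kappa(i) Q'(i,j)| \le \alpha^{-1} \cdot \frac{B\sqrt{\kappa(i)}}{\sqrt{\alpha^2 T}}\cdot\alpha = \frac{B\sqrt{\kappa(i)}}{\sqrt{T}}$ — wait, one must be careful with which $T$ appears where, but the powers of $\alpha$ are designed to match. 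The reversibility half of line 1 follows from the same manipulation applied symmetrically, using that $Q, Q'$ share the stationary measure $\mu$ so that $Q'(j,i)/Q(j,i) = \alpha$ off-diagonal. Line 2 (the marginal constraints) is unchanged under $i \ne j$ rescaling only up to the diagonal-fixing step, and should be handled as in Lemma \ref{LemmaBehaving}: the worry is that $\alpha^{-1} N'(i,j)$ could exceed $\kappa(i)$ or the marginals could drift, but the well-behaved covering set $\mathcal{R}_{B,T}$ reduction lets us assume $N'(i,j) \le Q'(i,j)$, hence $\alpha^{-1} N'(i,j) \le Q(i,j)$, keeping everything in range.

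The main obstacle I anticipate is bookkeeping the factors of $\alpha$ and $T$ consistently so that the $B$ in the target set stays exactly $B$ (not $\alpha^{-1} B$ or $3B$) while only the time parameter absorbs the $\alpha^{-2}$. Getting the first inequality of \eqref{ineqWellCovDef} to come out with the right-hand side $\frac{B\sqrt{\kappa'(i)}}{\sqrt{T}}$ — rather than something inflated — requires that the $\sqrt{\kappa(i)}/\sqrt{\alpha^2 T}$ scaling and the $\alpha^{-1}$ from rescaling $N'$ cancel precisely, which they do only if one starts from $\mathcal{S}_{B, \alpha^2 T'}$ and targets time $T'$. I would set this up carefully at the start, stating the containment $\mathcal{S}_{B, T}^{(Q')} \subseteq \widetilde{\mathcal{S}}_{B, \alpha^2 T}^{(Q)}$ (with an appropriate identification of $\kappa$), then conclude \eqref{IneqLaziness} by the same one-line argument used to derive \eqref{IneqCovScale2} from the analogous set inclusion. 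A secondary, minor point is verifying $N(i,i) \ge 0$, which uses $Q$ being $\tfrac12$-lazy together with the $\mathcal{R}_{B,T}$ reduction; this is why the hypothesis that $Q$ is $\tfrac12$-lazy is stated.
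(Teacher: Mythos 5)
Your construction is essentially the paper's own: the paper also works inside the well-behaved sets $\mathcal{R}_{B,T}$ of Lemma \ref{LemmaBehaving} (so that $\tfrac12$-laziness keeps the rescaled entries in range), uses the $\kappa$-preserving map that multiplies the off-diagonal counts by $\alpha^{-1}$ and resets the diagonal, and then extracts the factor $\alpha^{-2}$ from the $B$-scaling inequality \eqref{IneqCovScale2}. The only packaging difference is that the paper keeps the time parameter fixed, proving first that $\tau_{\mathrm{wc}}'(t_{1},\ldots,t_{n},\alpha B) \leq \tau_{\mathrm{wc}}(t_{1},\ldots,t_{n},B)$ via a bijection between $\mathcal{R}_{B,T}$ and $\mathcal{R}_{\alpha B,T}'$, and only then rescales $B$; you instead fold the time rescaling directly into the set comparison. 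Both routes are fine.

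The one thing you must fix is the direction of the time rescaling, which your write-up states inconsistently: your ``cleaner route'' sentence is correct, but your opening setup and your closing sentence assert the opposite assignment, and that assignment does not work. The correct comparison sends a pair $(\kappa,N') \in \mathcal{S}_{B,\alpha^{-2}T}'$ (for $Q'$, at the \emph{larger} time) to a pair $(\kappa,N) \in \mathcal{S}_{B,T}$ (for $Q$): then the first line of \eqref{ineqWellCovDef} comes out exactly, since $\alpha^{-1}\, B\sqrt{\kappa(i)}/\sqrt{\alpha^{-2}T} = B\sqrt{\kappa(i)}/\sqrt{T}$, and the conclusion transfers because with $T = \tau_{\mathrm{wc}}(t_{1},\ldots,t_{n},B)$ one gets $\kappa(i) > t_{i}/T \geq t_{i}/(\alpha^{-2}T)$; equivalently, in contrapositive form a violation $\kappa(i) \leq t_{i}/(\alpha^{-2}T)$ for $Q'$ is a fortiori a violation at the larger threshold $t_{i}/T$ for $Q$. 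With the assignment you wrote at the start and the end (start from $\mathcal{S}_{B,\alpha^{2}T}'$, target $\mathcal{S}_{B,T}$), both steps fail for $0<\alpha<1$: the violation does not transfer, since $1/(\alpha^{2}T) \geq 1/T$, and the line-1 bound inflates to $\alpha^{-2}B\sqrt{\kappa(i)}/\sqrt{T}$ rather than cancelling. Once the direction is fixed as in your ``cleaner route,'' the remaining points you raise (the $\mathcal{R}_{B,T}$ reduction giving $\alpha^{-1}N'(i,j) \leq Q(i,j)$ and $N(i,i) \geq 0$ via $\tfrac12$-laziness, and the reversed half of line 1 via $Q'(j,i) = \alpha Q(j,i)$ for $j \neq i$) go through exactly as in the paper.
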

\begin{proof}
Let $\mathcal{S}_{B,T}$ and $\mathcal{S}_{B, T}'$ denote the pairs $(\kappa,N)$ that satisfy inequalities \eqref{ineqWellCovDef} for the kernels $Q,Q'$ respectively and let $\mathcal{R}_{B,T}$ and $\mathcal{R}_{B, T}'$ be as in Equation \eqref{EqDefNiceCovColl}. We then define a bijection $F = (F_{1}, F_{2})$ from $\mathcal{R}_{B,T}$ to $\mathcal{R}_{\alpha B, T}'$ by setting $F(\kappa, N) = (\kappa',N')$ where
\be 
\kappa'(i) = \kappa(i)
\ee 
for all $1 \leq i \leq n$,
\be 
N'(i,j) = \alpha N(i,j)
\ee 
for all $i \neq j$, and $N'(i,i) = 1 - \sum_{j \neq i} N'(i,j)$ for all $1 \leq i \leq n$. This map is injective, and its image is contained in $\mathcal{R}_{\alpha B, T}$. To check that it is in fact bijective, we define a map $F^{-1} = (F_{1}^{-1}, F_{2}^{-1})$ from $\mathcal{R}_{\alpha B, T}$ to $\mathcal{R}_{B,T}$ by setting $F^{-1}(\kappa',N') = (\kappa,N)$ where $\kappa(i) = \kappa(i)'$
for all $1 \leq i \leq n$, $
N(i,j) = \alpha^{-1} N(i,j)$ 
for all $i \neq j$, and  $N(i,i) = 1 - \sum_{j \neq i} N(i,j)$ for all $1 \leq i \leq n$. It can be verified that $F^{-1}$ is an injection and that $F \circ F^{-1}$ is the identity. Since $F_{1}(\kappa) = \kappa$, this implies that
\be 
\min \Big \{ T > 0 \, : \, &\forall \, (\kappa, N) \in \mathcal{R}_{B,T}, \, \forall \, i \in [n], \, \kappa(i) > \frac{t_{i}}{T} \Big \}\\
& = \min \Big \{ T > 0 \, : \, \forall \, (\kappa, N) \in \mathcal{R}_{\alpha B,T}', \, \forall \, i \in [n], \, \kappa(i) > \frac{t_{i}}{T} \Big \}.
\ee 
By Lemma \ref{LemmaBehaving}, this implies 
\be 
\tau_{\mathrm{wc}}'(t_{1},\ldots,t_{n},\alpha B) \leq  \tau_{\mathrm{wc}}(t_{1},\ldots,t_{n},B).
\ee 
Combining this with inequality \eqref{IneqCovScale2} completes the proof.
\end{proof}

As mentioned before, Lemmas \ref{LemmaMonotone} and \ref{LemmaLazy} are meant to be simple analogues of the well-developed comparison theory for Markov chains \cite{DGJM06}. The bounds in this note can already be combined with Lemma \ref{LemmaWellCovEx} to obtain at least some bound on the well-covering time of any irreducible $\frac{1}{2}$-lazy Markov chain, though this bound is often very conservative. For example, if the Markov chain exhibits drift towards a small number of states (\textit{e.g.}, the KCIP chain in Example \ref{ExKcipCarefulStatement}), the associated well-covering time can be much closer to the mixing time of the kernel $Q$ than would be suggested by comparison with Lemma \ref{LemmaWellCovEx}. This same strong dependency of our bounds on the stationary distribution of the underlying Markov chain occurs for the usual comparison theory as well. 

\section{Stronger Mixing Bounds with Additional Regularity} \label{SecSpecCond}

We discuss additional assumptions that can give stronger bounds on the mixing time, with an emphasis on bounds that are effective before the occupation measures of `most' parts of the partition are large. These bounds are most useful when $n$ is large.

\subsection{Drift Bound} \label{SecDriftBound}
One of the main difficulties in using the bounds in \cite{MaRa00,JSTV04,MaRa02,MaRa06, MaYu09}, as well as our bounds in Section \ref{SecNaiveBoot}, is their sensitivity to poor mixing on sets that have small measure under $\pi$. The simplest way to circumvent this difficulty is through a `drift condition.'

Neither drift conditions nor attempting to ignore sets of small measure when bounding mixing times are new ideas; we discuss them here because they are popular and useful in the context of this paper, not novel. Drift conditions were famously used in \cite{Rose95} and many subsequent papers to derive general mixing bounds for chains. A central part of the probabilistic bound on the mixing time given in \cite{BSZ10} involves showing that certain sets of small measure can (eventually) be ignored, and \cite{kovchegov2015path} explicitly discusses this issue in the context of path-coupling arguments (see \cite{BuDe97}). The literature on ignoring sets of small measure when proving Poincar\'{e} and log-Sobolev inequalities seems smaller (however, see  \cite{Schw02} for one example).  \par

Fix constants $0 < a \leq 1$, $0 \leq b < \infty,$ and $k \in \mathbb{N}$ and let $V: \Omega \rightarrow \mathbb{R}^{+}$ be a function that satisfies the drift condition
\be \label{IneqDriftCondBasic}
\E[V(X_{t+k}) | X_{t}] \leq (1 - a) V(X_{t}) + b
\ee 
and has $\max_{x \in \Omega} V(x) = \vmax < \infty$. Inequality \eqref{IneqDriftCondBasic} is a special case of the popular drift condition used in \cite{Rose95}, and the function $V$ is often called a \textit{Lyapunov function}. Define the sets 
\be \label{EqDefSmallSets}
\L(C) = \{ \omega \in \Omega \, : \, V(\omega) \leq C \}.
\ee 
We have:

\begin{thm} [Decompositions and Drift Condition] \label{ThmDecompDrift}
Let $K$ be a transition kernel with state space $\Omega$ and let $V, a,b,k$ satisfy inequality \eqref{IneqDriftCondBasic}. Fix $\frac{4a}{b} < M < \infty$ and let $\Omega' = \L(M)$. Finally, let $\tmix'$ be the mixing time of the trace of $K$ on $\Omega'$. Then the mixing time $\tmix$ of $K$ satisfies:
\be 
\tau_{\mathrm{mix}} \leq \frac{16 c_{\gamma}}{3a} \ \max(\frac{16}{ c_{\gamma}'} \tmix', k \log(16 V_{\max}), 8 \log(16)).
\ee 
\end{thm}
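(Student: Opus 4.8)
The plan is to combine the drift condition \eqref{IneqDriftCondBasic} with Lemma \ref{LemmaBasicMixing} (or, more precisely, the hitting-time characterization of mixing in \eqref{eqn:PS13eqn}), using the partition $\Omega = \Omega' \sqcup (\Omega \setminus \Omega')$ with $\Omega' = \L(M)$. The key point is that the drift condition forces the chain to return to the sublevel set $\Omega' = \L(M)$ quickly: iterating \eqref{IneqDriftCondBasic} shows that $\E_x[V(X_{jk})]$ decays geometrically (up to the additive term $b/a$), and since $M > 4b/a$ a standard Markov-inequality argument gives that the hitting time $\tau_{\Omega'}$ has subgeometric tails with mean $O(k \log(\vmax)/a)$, uniformly in the starting point $x \in \Omega$. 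This handles the ``escape back to $\Omega'$'' half of the decomposition and plays the role of the occupation-time bound in Lemma \ref{LemmaBasicMixing}.

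First I would make the drift estimate precise: set $m(j) = \max_x \E_x[V(X_{jk})]$, so \eqref{IneqDriftCondBasic} gives $m(j+1) \le (1-a) m(j) + b$, hence $m(j) \le (1-a)^j \vmax + b/a$. Choosing $j_0 = \lceil k^{-1}\cdot(\text{const}) \cdot a^{-1} k \log(16\vmax)\rceil$ of the form appearing in the theorem statement, one gets $m(j_0) \le \tfrac{b}{a} + \tfrac{1}{16} \le \tfrac{M}{4} + \tfrac{1}{16}$, and then Markov's inequality gives $\max_x \P_x[X_{j_0 k} \notin \Omega'] = \max_x \P_x[V(X_{j_0 k}) > M] \le \tfrac{1}{4} + o(1)$, which after absorbing constants yields $\max_x \P_x[\tau_{\Omega'} > j_0 k] \le \tfrac14$ (or some fixed constant $<1$). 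By \eqref{eqn:subexp}, this upgrades to $\max_x \E_x[\tau_{\Omega'}] = O(a^{-1} k \log(\vmax) + a^{-1}\log(16))$, and more usefully to the tail bound that $\kappa_{\Omega'}(T)$, the occupation time of $\Omega'$, is comparable to $T$ with high probability once $T$ is a large enough multiple of this quantity.

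Second I would plug this into Lemma \ref{LemmaBasicMixing} with $I = \{\Omega'\}$ a singleton partition piece (strictly, the two-set partition $\{\Omega', \Omega\setminus\Omega'\}$ with $I$ the index of $\Omega'$). To do this I need $\pi(\Omega')$ bounded below by something like $\alpha$: this also follows from the drift condition, since $\pi(V) \le b/a$ by stationarity, so $\pi(\Omega \setminus \Omega') = \pi(V > M) \le b/(aM) < \tfrac14$, giving $\pi(\Omega') > \tfrac34$; thus we may take $\alpha, \beta, \gamma$ in Lemma \ref{LemmaBasicMixing} to be fixed absolute constants. The quantity $\mathcal{T}$ in \eqref{eqn:Taulem} then requires choosing $t \asymp \varphi_{\Omega'} = \tmix'$ so that $\varphi_{\Omega'}/(c'_\gamma t) < \tfrac18$, and $T$ large enough that $\max_z \P_z[\kappa_{\Omega'}(T) < t] < \tfrac18$; by the occupation-time bound from the previous paragraph this holds for $T$ a suitable multiple of $\max(\tmix', a^{-1}k\log(\vmax), a^{-1}\log(16))$, and Lemma \ref{LemmaBasicMixing} then gives $\tmix \le \tfrac43 c_\alpha \mathcal{T}$, which after tracking constants is the claimed bound $\tfrac{16 c_\gamma}{3a}\max(\tfrac{16}{c'_\gamma}\tmix', k\log(16\vmax), 8\log(16))$.

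\textbf{Main obstacle.} The routine but delicate part is the bookkeeping of constants — matching the $1/a$ prefactor and the exact form $\max(\tfrac{16}{c'_\gamma}\tmix', k\log(16\vmax), 8\log(16))$ requires being careful about how the geometric decay $(1-a)^j$ is converted into a number of steps (using $1-a \le e^{-a}$) and how the three probability budgets ($\tfrac14$ split as $\tfrac18 + \tfrac18$, etc.) are allocated. The one genuinely substantive step, rather than bookkeeping, is verifying that the occupation time $\kappa_{\Omega'}(T)$ — not merely the first hitting time $\tau_{\Omega'}$ — is large with high probability; this follows by applying the hitting-time tail bound \eqref{eqn:subexp} repeatedly to successive excursions out of $\Omega'$ (each excursion has length dominated by an independent copy of a subgeometric variable with the controlled mean), so that after $T$ steps the chain has accumulated $\Omega(T/(\text{excursion length}))$ visits to $\Omega'$ with overwhelming probability — exactly the kind of renewal argument already used in the proof of Corollary \ref{CorSimpSupBdBad2}, which can be cited or mimicked.
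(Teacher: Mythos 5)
Your overall skeleton is the same as the paper's: the two-set partition $\Omega' = \L(M)$, $\Omega\setminus\Omega'$, the bound $\pi(\Omega') \geq 1-\tfrac{b}{aM} \geq \tfrac34$ from stationarity (the paper's Lemma \ref{LemmaDriftConc}), a burn-in bound on $\tau_{\Omega'}$ of order $a^{-1}k\log(\vmax)$ from the geometric decay of $\E_x[V(X_{jk})]$, an occupation-time bound for $\Omega'$, and a final application of Lemma \ref{LemmaBasicMixing}. But there is a genuine gap in the step you yourself flag as the substantive one. You propose to control $\kappa_{\Omega'}(T)$ by dominating \emph{every} excursion out of $\Omega'$ by an independent subgeometric variable whose mean is the \emph{uniform-over-$\Omega$} hitting-time bound $O(a^{-1}k\log \vmax)$ (the renewal argument of Corollary \ref{CorSimpSupBdBad2}). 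That argument only yields $\kappa_{\Omega'}(T) = \Omega\big(aT/(k\log\vmax)\big)$, so to accumulate $t \asymp \tmix'/c_\gamma'$ visits you would need $T \gtrsim a^{-1}k\log(\vmax)\,\tmix'$ --- a \emph{product} of the two quantities, exactly the multiplicative structure of Corollary \ref{CorSimpSupBdBad2} and of the older decomposition bounds --- whereas the theorem asserts a \emph{maximum}. Your claim that ``$T$ a suitable multiple of $\max(\tmix', a^{-1}k\log(\vmax), a^{-1}\log(16))$'' suffices does not follow from the excursion bound you stated.

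The missing idea is the paper's Lemma \ref{LemmaTechLem1}: only the \emph{first} passage to $\L(M)$ costs a factor $\log(\vmax)$. Once the chain has visited $\L(M)$, each subsequent return time is stochastically dominated by a geometric variable with mean $O(1/a)$ (in $k$-step blocks), with no $\vmax$ dependence. The reason is that an excursion starts from a state reached in one block from a point with $V \leq M$, so its conditional expected $V$-value is at most $(1-a)\tfrac{4b}{a}+b \leq (1-\tfrac a2)\tfrac{4b}{a}$; iterating the conditional drift on the event $\{\tRe \geq s\}$ gives $\P[\tRe > s] \leq (1-\tfrac a2)^s$, so the decay only has to beat a constant rather than $\vmax/M$. (Your crude bound instead restarts the expectation argument from $\vmax$ on every excursion.) With this refinement the occupation fraction of $\L(M)$ after the burn-in is $\Theta(a)$, which is what produces the additive structure: time $O(a^{-1}k\log \vmax)$ to first reach $\L(M)$, plus time $O(a^{-1}c_\gamma'^{-1}\tmix')$ to accumulate the required $\tfrac{8}{c_\gamma'}\tmix'$ visits --- this is the content of the paper's Corollary \ref{CorMainDriftResult}, after which Lemma \ref{LemmaBasicMixing} is applied exactly as you propose. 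With Lemma \ref{LemmaTechLem1} (or an equivalent ``restart from the exit value, not from $\vmax$'' estimate) inserted, your argument goes through and the remaining issues are indeed just constant bookkeeping.
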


\begin{proof}
The proof is given in Appendix B.
\end{proof}

\subsection{Regularity and Contractivity Assumptions} \label{SecRegAssump}

One of the main contributions of \cite{JSTV04} was the use of regularity assumptions to strengthen their bounds. In this section, we consider one useful and strong assumption that has been satisfied in practice (see \textit{e.g.}, Lemma 4.5 of \cite{DLP10}) \footnote{See Corollary \ref{CorSimpSupBdBad2} and Lemma \ref{LemmaSimpSupBdBad2} for a simple bound based on a regularity condition that looks more similar to the bounds in \cite{JSTV04}.}. Our assumptions in this section are closely related to the notion of \textit{metastability}; see \textit{e.g.} the very recent \cite{zhang2015multi} for bounds on the spectral gap and log-Sobolev constants of metastable chains that are useful in similar situations.

Define a less lazy version of $\overline{K}$ by setting 
\be \label{EqDefProjChainLL}
\overline{K}_{\mathrm{LL}}(i,j) = \frac{\overline{K}(i,j)}{2(1 - \overline{K}(i,i))}
\ee 
for $i \neq j$ and $\overline{K}_{\mathrm{LL}}(i,i) = 1 - \sum_{j \neq i} \overline{K}_{\mathrm{LL}}(i,j)$.

For any pair of measures $\mu,\nu$ on a metric space $(\mathcal{X},d)$, denote by $\Pi(\mu,\nu)$ the collection of all pairs of random variables $(X,Y) \in \mathcal{X}^{2}$ that have marginal distributions $X \stackrel{D}{=} \mu$, $Y \stackrel{D}{=} \nu$. Recall that the \textit{Wasserstein metric} on measures on a metric space $(\mathcal{X}, d)$ is given by
\be 
W_{d}(\mu,\nu) = \inf_{(X,Y) \in \Pi(\mu,\nu)} \E[d(X,Y)].
\ee 
Recall the escape time $\tEsc[i]$ from \eqref{eqn:escapetime}.

\begin{defn} [Contraction Condition] 
Let $d$ be a metric on $[n]$. For $X_{0} = x \in \Omega_i$, let $\mu_{x}'$ be the distribution of $\mathcal{P}(X_{\tEsc[i]}) \in [n] \backslash \{i\}$ and let $\mu_{x}(\cdot) = \frac{1}{2} \mu_{x}'(\cdot)+ \frac{1}{2} \delta_{i}(\cdot)$. Say that the kernel $K$ satisfies a contraction condition with coefficients $0 < \beta < \alpha \leq 1$ if 
\be \label{IneqContractionAssumption}
\max_{x \in \Omega_{i}, y \in \Omega_{j}}  W_{d}(\mu_{x}, \mu_{y}) \leq (1-\alpha) d(i,j) + \beta.
\ee 
\end{defn}

The parameter $\frac{\beta}{\alpha} < 1$ plays a role in Theorem \ref{ThmContCondHighDim} similar to the role of the regularity parameter $\gamma$ in Theorem 1 of \cite{JSTV04}. The purpose of the Definitions \eqref{EqDefProjChainLL} and \eqref{IneqContractionAssumption} is to allow us to couple a suitable sped-up copy of the function $\{ \mathcal{P}(X_{t}) \}_{t \geq 0}$ to a Markov chain $\{Z_{t}\}_{t \geq 0}$ evolving according to $\overline{K}_{\mathrm{LL}}$ so that $d(\mathcal{P}(X_{t}), Z_{t})$ is often small; this is made precise in inequality \eqref{IneqUsingContractCondFirst}.

We obtain the following bound for Markov chains satisfying  \eqref{IneqContractionAssumption}:

\begin{thm} \label{ThmContCondHighDim}

Let $K$ be a Markov chain on state space $\Omega = \sqcup_{i=1}^{n} \Omega_{i}$ and let $d$ be a metric on $[n]$  that satisfies $1 \leq d(i,j) \leq D_{\max} < \infty$ for all $i \neq j$. Assume that $K$ satisfies inequality \eqref{IneqContractionAssumption} for some $0 < \beta < \frac{\alpha}{2} \leq \frac{1}{2}$ and that it also satisfies
\be [IneqRegularityAssumptions2]
 \min_{x \in \Omega_{i}} \P_{x}[\tEsc[i] > a_{1} \varphi_{\max} \log(n)]&\geq \delta_{1} \\
 \max_{x \in \Omega_{i}} \P_{x}[\tEsc[i] > a_{2}  \varphi_{\max} \log(n)] &\leq 1- \delta_{2}
\ee 
for some  $0 < a_{1}, a_{2}, \delta_{1},\delta_{2}$. Then the mixing time $\tmix$ of $K$ satisfies 
\be \label{IneqContCondConcMixing}
\tmix \leq \tilde{C}_{1} \, \varphi_{\max} \log(n) \, \max( \tilde{C}_{2} \overline{\varphi}  + 1,  \frac{\tilde{C}_{3}}{\log(1 - \alpha)}),
\ee 
where $\overline{\varphi}$ is the mixing time of the kernel $\overline{K}_{\mathrm{LL}}$ defined in \eqref{EqDefProjChainLL},  $\tilde{C}_{1} = \frac{1024 }{\gamma}   c_{2 \epsilon} \, \frac{a_{2}}{\delta_{2}}   \log(16) \big( \log(1 - \delta_{1}^{ \lceil \frac{8  e   }{a_{1} \,  c_{\epsilon}' } \rceil}) \big)^{-1}$, $\gamma = \frac{1}{2} - \frac{\beta}{\alpha}$, $\tilde{C}_{2} = \log_{2}(\frac{8}{\gamma})$, $\tilde{C}_{3} = \log(\frac{8}{\gamma}) + \log(D_{\max})$ and $\epsilon = \frac{1}{4} - \frac{\gamma}{16}$.
\end{thm}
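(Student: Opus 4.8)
The plan is to reduce the statement of Theorem~\ref{ThmContCondHighDim} to an application of Lemma~\ref{LemmaBasicMixing2} (or, more conveniently, the well-covering machinery of Section~\ref{SecNaiveBoot} combined with the contraction coupling), and the heart of the argument is a coupling that uses the contraction condition \eqref{IneqContractionAssumption} to show that the occupation measures $\kappa_i(T)$ become simultaneously large for all $i$ in a large-measure index set once $T$ is of the order appearing in \eqref{IneqContCondConcMixing}. First I would set up the right ``sped-up'' skeleton: observe the projected trajectory $\{\mathcal P(X_t)\}_{t\ge0}$ and, using the lower escape-time bound in \eqref{IneqRegularityAssumptions2}, record only those epochs where the chain genuinely moves between parts, so that the resulting jump chain has transition kernel comparable to $\overline K_{\mathrm{LL}}$; the factor $\big(\log(1-\delta_1^{\lceil 8e/(a_1 c_\epsilon')\rceil})\big)^{-1}$ in $\tilde C_1$ is exactly the price of converting ``number of genuine moves'' into ``number of real time steps,'' via \eqref{eqn:subexp2} applied to the escape times, and the upper escape-time bound controls how long each visit lasts.

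Next I would run the coupling: let $\{Z_t\}_{t\ge0}$ evolve according to $\overline K_{\mathrm{LL}}$ started from stationarity (or from an arbitrary point), and couple it step-by-step with the sped-up skeleton of $\{\mathcal P(X_t)\}$ so that, by \eqref{IneqContractionAssumption}, the expected metric distance contracts: $\E[d(\mathcal P(X_{t'}),Z_{t'})]\le(1-\alpha)\E[d(\mathcal P(X_{t'-1}),Z_{t'-1})]+\beta$, hence after $O\!\big(\frac{\log(D_{\max}/\gamma)}{\log(1/(1-\alpha))}\big)$ skeleton steps the distance is at most, say, $\tfrac12+\tfrac{\beta}{\alpha}=1-\gamma-\tfrac{\text{something}}{}$ below $1$ with good probability; since $d\ge1$ off the diagonal this forces $\mathcal P(X_{t'})=Z_{t'}$ with probability at least $\gamma$ (this is where $\gamma=\tfrac12-\tfrac\beta\alpha$ and $\epsilon=\tfrac14-\tfrac\gamma{16}$ enter, leaving room for the mixing time $\overline\varphi$ of $\overline K_{\mathrm{LL}}$ to bring $Z$ to stationarity). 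Concatenating a mixing phase of length $O(\overline\varphi)$ with this contraction phase and then waiting $O(\varphi_{\max}\log n)$ real steps per skeleton step gives the $\max(\tilde C_2\overline\varphi+1,\tilde C_3/\log(1-\alpha))$ factor; the factor $\varphi_{\max}\log n$ out front and the $c_{2\epsilon}$ come from feeding the resulting hitting-time bound into \eqref{eqn:PS13eqn} exactly as in the proof of Lemma~\ref{LemmaBasicMixing2}.

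Concretely, I would: (i) fix $A\subset\Omega$ with $\pi(A)\ge\alpha$ and, following Lemma~\ref{LemmaBasicMixing2}, pass to the index set $J_A$ with $\pi(\cup_{i\in J_A}\Omega_i)\ge\tfrac\alpha{2-\alpha}$; (ii) show that, for $T$ of the asserted order, with probability at least $1-\tfrac14$ the occupation measure $\kappa_i(T)$ exceeds $O(\varphi_i)$ (in fact $O(\varphi_{\max})$) \emph{simultaneously} for all $i\in J_A$ --- this is the coupling step above, combined with the observation that once $\mathcal P(X)$ is close to the stationary distribution of $\overline K_{\mathrm{LL}}$, which charges every part with mass $\Theta(1/n)$ only in the worst case but in general proportionally to $\pi(\Omega_i)/\sum_{j}\pi(\Omega_j)$, every fixed part is visited $\gtrsim T/(n\varphi_{\max}\log n)$ times and each visit lasts $\gtrsim \varphi_{\max}$ by the lower escape bound; (iii) apply the second and third displayed inequalities in the proof of Lemma~\ref{LemmaBasicMixing2} with $t=\Theta(\varphi_{\max})$ to conclude $\max_z\P_z[\tau_A>T]\le\tfrac14$; (iv) invoke \eqref{eqn:subexp} and \eqref{eqn:PS13eqn} to get $\tmix\le\tfrac43 c_{2\epsilon}T$ and read off the constants. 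The main obstacle will be step (ii): carefully building the coupling between the \emph{sped-up, multi-scale} skeleton of $\{\mathcal P(X_t)\}$ (whose inter-jump times are random, only stochastically controlled by the escape-time assumptions) and the clean discrete-time chain $\overline K_{\mathrm{LL}}$, and then transferring a contraction-in-expectation statement into a high-probability simultaneous lower bound on all the $\kappa_i$, $i\in J_A$ --- this requires a union bound over the (at most $n$) indices in $J_A$, which is precisely why every ``error probability'' must be driven below $1/n$ and hence why $\log n$ appears, via $\varphi_{\max}\log n$, throughout the final bound. I expect the bulk of the work, and essentially all of Appendix-level detail, to live in making that coupling and the attendant concentration estimates precise; the reduction to \eqref{eqn:PS13eqn} afterwards is routine given Lemma~\ref{LemmaBasicMixing2}.
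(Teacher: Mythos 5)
Your high-level architecture --- the lazified skeleton of $\{\mathcal{P}(X_t)\}$, the coupling to $\overline{K}_{\mathrm{LL}}$ via the contraction condition \eqref{IneqContractionAssumption}, the conversion of skeleton steps into real time through the escape-time hypotheses \eqref{IneqRegularityAssumptions2}, and the final appeal to \eqref{eqn:PS13eqn} through Lemma \ref{LemmaBasicMixing2} --- matches the paper's proof. But your step (ii) contains a genuine error that breaks the quantitative conclusion: you aim to show that $\kappa_i(T)\gtrsim\varphi_{\max}$ \emph{simultaneously for all} $i\in J_A$. That is not what Lemma \ref{LemmaBasicMixing2} requires, and it cannot be achieved at the claimed time scale. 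Lemma \ref{LemmaBasicMixing2} only needs $\max_z\P_z\big[\cap_{i\in J_A}\{\kappa_i(T)<t\}\big]$ to be small, i.e.\ that with good probability \emph{at least one} $i\in J_A$ has $\kappa_i(T)\ge t$; the term $\sum_{i\in J_A}e^{-\lfloor c_{\alpha/2}'t/(e\varphi_i)\rfloor}$ then absorbs the union over $i$ of the trace hitting times, and this --- not a union bound over occupation measures --- is where the $\log n$ enters, via $t=\Theta(\varphi_{\max}\log n)$. Your own accounting exposes the problem: if each part is visited only $\gtrsim T/(n\varphi_{\max}\log n)$ times and you insist that every part's occupation reach $\varphi_{\max}$, you need $T\gtrsim n\varphi_{\max}\log n$, which introduces a factor of $n$ absent from \eqref{IneqContCondConcMixing} (whose only $n$-dependence is $\log n$ and whatever sits inside $\overline{\varphi}$, which can be far smaller than $n$). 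Demanding that all $\kappa_i$ be large is precisely the weakness of Lemma \ref{LemmaBasicMixing} that Lemma \ref{LemmaBasicMixing2} was built to avoid; Example \ref{ExLemmaCompStatement} shows the same pigeonhole obstruction forcing a spurious factor of $|I|$.

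The repair, which is what the paper does: use the contraction coupling together with the mixing of $\overline{K}_{\mathrm{LL}}$ only to show that the \emph{set} $\cup_{i\in J_A}\Omega_i$ is hit within $O(T/\gamma)$ skeleton steps with probability at least $\gamma/4$ from any starting point, so that $\overline{\tau}_{J_A}$ has subgeometric tails; convert to real time with the upper escape bound in \eqref{IneqRegularityAssumptions2}; and then consider $\tau_{J_A,\mathrm{cov}}(C)$, the time of the $C$-th \emph{entry} into $\cup_{i\in J_A}\Omega_i$. The lower escape bound gives each entry an independent chance of at least $\delta_1^{\lceil 8e/(a_1c_\epsilon')\rceil}$ of lasting $\frac{e\varphi_{\max}}{c_\epsilon'}\log(8n)$ consecutive steps inside a single part, so after $C=\log(16)\big(\log(1-\delta_1^{\lceil 8e/(a_1c_\epsilon')\rceil})\big)^{-1}$ entries, with probability at least $7/8$ \emph{some one} index $i\in J_A$ has $\kappa_i$ large --- which is all that Lemma \ref{LemmaBasicMixing2} needs. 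With step (ii) restated in this one-index form, the rest of your outline goes through.
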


\begin{remarks}
We point out that this result is easier to apply than it might appear at first glance:
\begin{itemize}
\item Since $a_{1}, a_{2}$ are arbitrary (and can depend on $n$), an inequality of the form \eqref{IneqRegularityAssumptions2} will be satisfied for any ergodic Markov chain on a finite state space.
\item The popular \textit{Total Variation} distance is in fact a Wasserstein distance. Thus, Inequality \eqref{IneqContractionAssumption} will also be satisfied by all sufficiently large powers $K^{k}$ of any ergodic Markov chain $K$ on a finite state space. See Example \ref{RemContCondTrace2} for a general approach to proving such inequalities for more natural metrics and with $k=1$.
\end{itemize}
\end{remarks}

\begin{proof}
We begin by constructing a coupling of the Markov chain $\{X_{t} \}_{t \geq 0}$, with state space $\Omega$, to a Markov chain evolving according to $\overline{K}_{\mathrm{LL}}$ on state space $[n]$.

Fix $X_{0} = x$, let $\tau_{\mathrm{exit}}^{(0)} = 0$, and define inductively $\tau_{\mathrm{exit}}^{(s+1)} = \min\{ t > \tau_{\mathrm{exit}}^{(s)} \, : \, \mathcal{P}(X_{t}) \neq \mathcal{P}(X_{\tau_{\mathrm{exit}}^{(s)}}) \}$. For $t \in \mathbb{N}$, define $Y_{t}' = \mathcal{P}(X_{\tau_{\mathrm{exit}}^{(t)}})$. Let $\{\eta_{t} \}_{t \geq 0}$ be a sequence of i.i.d. geometric random variables with mean 2, let $\lambda^{(s)} = \min \{ j \geq 0 \,: \, \sum_{i=1}^{j} \eta_{i} \geq s \}$, and for $\lambda^{(s)} \leq t < \lambda^{(s+1)}$, define $Y_{t} = Y_{s}'$. We note that $\{ Y_{t} \}_{t \geq 0}$ is not a Markov chain.

Denote by $\{Z_{t} \}_{t \in \mathbb{N}}$ a Markov chain on $[n]$ evolving according to the kernel $\overline{K}_{\mathrm{LL}}$ and started according to the distribution $\overline{\pi}[i] \equiv \pi(\Omega_{i})$. By the assumption made in Equation \eqref{IneqContractionAssumption}, it is possible to couple $\{Y_{t}\}_{t \in \mathbb{N}}$, $\{Z_{t}\}_{t \in \mathbb{N}}$ so that
\be \label{IneqUsingContractCondFirst}
\E[d(Y_{t+1}, Z_{t+1}) | Y_{t}, Z_{t}] \leq (1- \alpha) d(Y_{t}, Z_{t}) + \beta.
\ee 
Under this coupling, for any $t \in \mathbb{N}$,
\be 
\P[Y_{t} = Z_{t}] &\geq \P[d(Y_{t},Z_{t}) < 1] \\
&\geq 1 - \E[d(Y_{t},Z_{t})] \\
&\geq 1 - \frac{\beta}{\alpha} - (1-\alpha)^{t} D_{\max}.\label{IneqWhyNoLabel}
\ee

Fix any subset $I \subset [n]$ satisfying $\pi(\cup_{i \in I}\Omega_{i}) > \frac{1}{2} - \frac{\gamma}{8}$ and let $\overline{\tau}_{I} = \min \{t > 0 \, : \, Y_{t} \in I \}$. Then for any starting points $Y_{0} = y, Z_{0} =z$ and any $T \geq \max(\frac{\overline{\varphi}}{\log(2)} \log(\frac{\gamma}{8}), \frac{\log(\gamma) - \log(8D)}{\log(1-\alpha)})$, we have by inequality \eqref{IneqWhyNoLabel}  
\be 
\P[\overline{\tau}_{I} \leq T] &\geq \P[Y_{T} \in I] \\
&\geq \P[Z_{T} \in I] - \P[Y_{T} \neq Z_{T}]  \\
&\geq (\pi(\cup_{i \in I} \Omega_{i}) - 2^{-\lfloor \frac{T}{\overline{\varphi}} \rfloor}) - (\frac{\beta}{\alpha} + D_{\max} (1-\alpha)^{T}) \\
&\geq (\frac{1}{2} - \frac{\gamma}{8} - \frac{\gamma}{8}) - (\frac{1}{2} - \gamma - \frac{\gamma}{8}) \geq \frac{\gamma}{4}. 
\ee 
Since this holds uniformly over initial points $Y_{0},Z_{0}$, we have for $k \in \mathbb{N}$
\be \label{IneqStochDomHittingProj}
\max_{y \in [n] } \P_{y}[\overline{\tau}_{I} \geq k \frac{4T}{\gamma}] \leq e^{-k}.
\ee 

Let $\tau_{I} = \min \{t > 0 \, : \, X_{t} \in \cup_{i \in I} \Omega_{i} \}$. Combining inequalities \eqref{IneqRegularityAssumptions2} and \eqref{IneqStochDomHittingProj}, we have for $k \in \mathbb{N}$   
\be 
\max_{x \in \Omega} \P_{x}[\tau_{I} > k \frac{16T}{\gamma}  \frac{a_{2} \varphi_{\max}}{\delta_{2}} \log(n)] &\leq \max_{x \in \Omega} \P_{x}[\overline{\tau}_{I} \geq k \frac{4T}{\gamma}] + \max_{x \in \Omega} \P_{x}[\tau_{\mathrm{exit}}^{( \lceil k \frac{4T}{\gamma} \rceil)} > k \frac{16T}{\gamma}  \frac{a_{2} \varphi_{\max}}{\delta_{2}} \log(n) ] \\
&\leq e^{-k} + e^{- \lfloor \frac{4kT}{\gamma} \rfloor} \\
&\leq 2 e^{-k}, \label{IneqStochDomHittingLift}
\ee  
where the second-last line follows from standard concentration inequalities for i.i.d. sums of geometric random variables.
For $C \in \mathbb{N}$, let
\be 
\tau_{I,\mathrm{cov}}(C) = \min \{t > 0 \, : \, \sum_{s: \tau_{\mathrm{exit}}^{(s)} < t} \textbf{1}_{Y_{s} \in I} > C
 \}
\ee 
be the first time that $X_{t}$ has entered $\cup_{i \in I} \Omega_{i}$ at least $C$ times. By inequalities \eqref{IneqStochDomHittingLift} and \eqref{eqn:subexp}, we have 
\be \label{IneqBigNCov1}
\max_{x \in \Omega} \E_{x}[\tau_{I,\mathrm{cov}}(C)] \leq \frac{48 T}{\gamma}  \frac{a_{2} \varphi_{\max}}{\delta_{2}} \log(n) C.
\ee 

By inequalities \eqref{IneqRegularityAssumptions2} and \eqref{eqn:subexp2}, 
\be 
\P_{x}[ \forall i \in I, \, \kappa_{i}(\tau_{I,\mathrm{cov}} (C)) \leq  \frac{e \,   \varphi_{\max}}{c_{\epsilon}'} \log(8n)] &\leq \max_{x \in \Omega} \P_{x}[ \tau_{\mathrm{exit}}^{(1)} \leq  \frac{e \,   \varphi_{\max} }{c_{\epsilon}'} \log(8n)]^{C} \\
&= (1 - \min_{x \in \Omega} \P_{x}[ \tEsc[1] > \frac{ e \,  \varphi_{\max}}{ c_{\epsilon}'} \log(8n)])^{C} \\
&\leq (1 - \delta_{1}^{ \lceil \frac{8  e   }{a_{1} \,  c_{\epsilon}'} \rceil} )^{C} \label{IneqBigNCov2}
\ee 
for all $C \in \mathbb{N}$.

Combining inequalities \eqref{IneqBigNCov1} and \eqref{IneqBigNCov2} with Markov's inequality and setting $C = \log(16) \big( \log(1 - \delta_{1}^{ \lceil \frac{8  e   }{a_{1} \,  c_{\epsilon}' } \rceil}) \big)^{-1}$, for all 
$t > \frac{768 T}{\gamma}  \frac{a_{2} \varphi_{\max}}{\delta_{2}} \log(n) C$, we have that 

\be 
\max_{x \in \Omega} \P_{x}[\forall i \in I, \, \kappa_{i}(t) < \frac{e  \varphi_{\max}}{  c_{\epsilon}'} \log(8n)] \leq \frac{1}{8}.
\ee 
Since this applies for all $I \subset [n]$ that satisfy  $\pi(\cup_{i \in I}\Omega_{i}) > \frac{1}{2} - \frac{\gamma}{8} = \frac{1}{2} - \frac{1}{8}(\frac{1}{2} - \frac{\beta}{\alpha})$, the result now follows from Lemma \ref{LemmaBasicMixing2}.
\end{proof}

\begin{example} \label{RemContCondTrace2}
The constants $\alpha, \beta$ associated with inequality \eqref{IneqContractionAssumption} are generally very poor for any partition of $\Omega$. However, in some situations, a trace of the Markov chain onto a set with large stationary measure will satisfy inequality \eqref{IneqContractionAssumption} with much larger constants. We give a prototypical example for which this small trick is useful, beginning with a discussion of why the trick is needed. We leave the proof of all of the claims made in this example to Appendix C.

Fix integers $\ell, m \geq 2$, let $\Omega = \mathbb{Z}_{2 \ell}^{m} = \{0,1,2,\ldots,2 \ell-1\}^{m}$ be the $m$-dimensional torus with side length $2 \ell$, and let $Q$ be the proposal distribution 
\be
Q((x_{1},\ldots,x_{m}), (y_{1},\ldots,y_{m})) =  \frac{1}{3 m} \sum_{j=1}^{m} \textbf{1}_{\forall \, i \neq j, \, x_{i} = y_{i}} \textbf{1}_{|x_{j} - y_{j}| \leq 1}, 
\ee
where addition is taken in the group $\mathbb{Z}_{2 \ell}^{m}$. Define the function $H$ on $\Omega$ by
\be 
H(x_{1},\ldots,x_{m}) = \sum_{i=1}^{m} \min(x_{i}, 2\ell -1 - x_{i}).
\ee 
Next, fix $C > 1$, let 
\be \label{eqn:targ}
\pi(x) \propto e^{-C H(x) \log(m)}
\ee 
be a distribution, let $K$ be the kernel of a Metropolis-Hasting Markov chain with proposal kernel $Q$ and target distribution $\pi$, and for $z \subset [m]$ (we allow $z = \emptyset$ as well) define 
\be 
\Omega_{z} = \{ x \in \Omega \, : \, \forall i \notin z, \, x_{i} \leq \ell-1; \, \forall i \in z, \, x_{i} \geq \ell\}.
\ee 
Thus, $\Omega = \sqcup_{z \subset [m]} \Omega_{z}$. Finally, let $d(z,z') = | z \Delta z' |$ be the usual Hamming distance on subsets of $[m]$. 
We are interested in the mixing of the above Markov chain when $k,\ell$ and $C > 6$ are held constant and $m$ goes to infinity. 

We give an informal argument that inequality \eqref{IneqContractionAssumption} cannot be satisfied with useful constants.  Let $\{ X_{t} \}_{t \geq 0}$, $\{ Y_{t} \}_{t \geq 0}$ be two copies of the Markov chain started at points points $x_{i} = (\ell-1)\textbf{1}_{i < \frac{m}{2}} \in \Omega_{\emptyset}$ and $y_{i} = (\ell-1)\textbf{1}_{i > \frac{m}{2}} \in \Omega_{\emptyset}$. For $z \in X_{0},Y_{0}$, $\P_{z}[\tEsc[\emptyset] = 1] \approx \frac{1}{6}$, and if $X_{1}, Y_{1} \notin \Omega_{\emptyset}$, they must be in different partitions. Thus, inequality \eqref{IneqContractionAssumption} cannot be satisfied for any $\beta \ll \frac{1}{12}$. By standard arguments concerning the contraction of simple random walk on the hypercube (see Example 8 of \cite{Olli10}), inequality \eqref{IneqContractionAssumption}  cannot be satisfied for any $\alpha \gg \frac{1}{m}$. These constants do not satisfy the conditions of Theorem \ref{ThmContCondHighDim}. \par
In this example (and many others), the constants can be substantially improved by taking a trace of this chain. For $0 \leq k \leq \ell-1$ and a subset $z \subset [m]$, define 
\be 
\Omega_{z}^{(k)} = \{ x \in \Omega \, : \, \forall i \notin z, \, x_{i} \leq \ell-1-k; \, \forall i \in z, \, x_{i} \geq \ell + k\}.
\ee 
Set $\Omega^{(k)} = \cup_{z \subset [m]} \Omega^{(k)}_{z}$. Let $\tilde{K}$ be the transition kernel of the trace of $K$ on $\Omega^{(k)}$. 
We show that any fixed $\ell, k \geq 2$, inequality  \eqref{IneqContractionAssumption} is satisfied for the kernel $\tilde{K}$ with constants  
\be 
\alpha &= (1 - \frac{1}{2 m})(1 + o(1)),
\beta = o(1)
\ee 
for $C > 6$ as $m$ goes to infinity (see Equation \eqref{EqContAssumptionToyHighDimConc} in Appendix C). Here $C$ is the constant appearing in Equation \eqref{eqn:targ}. We also prove $\pi(\Omega^{(k)}) = 1 - o(1)$ for $C > 6$ as $m$ goes to infinity (see Equation \eqref{IneqHighDimToyPreCoup2}). As shown below, these constants \emph{are} good enough to be useful.

We now show how Theorem \ref{ThmContCondHighDim} can be applied to our example as $m$ goes to infinity for fixed $\ell \geq 3$, $1 \leq k < \ell -1$, and for $6 < C < \infty$. We show that for $m$ sufficiently large this example satisfies the conditions of Theorem \ref{ThmContCondHighDim} with constants

\be[IneqBigListOfConstantsToyHighDimEx]
\alpha &= 1 - \frac{1}{2m}, \, \beta = \frac{1}{m^{3}}, \, \gamma = \frac{1}{4}, \, D_{\max} = m, \\
a_{1} = 16 a_{2} &= 2 \frac{\E[\tEsc[\emptyset] | X_{0} = (0,0,\ldots,0)] \log_{2}(e)}{\varphi_{\emptyset} m } > 1, \\
\delta_{1} = \delta_{2} &= \frac{1}{2}, \, \overline{\varphi} \leq 2m \log(m), \, T = 8 m \log(8m). \\
\ee 
From Theorem \ref{ThmContCondHighDim}, we conclude 
\be 
\tmix = O(m \log(m) \E[\tEsc[\emptyset] | X_{0} = (0,0,\ldots,0)]).
\ee 
This is a reasonable estimate. Indeed, by considering the escape time from any part of the partition, it can be verified that $ m \, \E[\tEsc[\emptyset] | X_{0} = (0,0,\ldots,0)] = O(\tmix)$. Thus, our estimate is off by at most a factor of $\log(m)$. By inequality \eqref{IneqExpectedHittingTimeLowerBound}, $m^{C-2} = O(\E[\tEsc[\emptyset] | X_{0} = (0,0,\ldots,0)])$, and so this factor of $\log(m)$ is small relative to the mixing time.
\end{example}

\section{Applications} \label{SecAppl}

In this section, we apply our results to two Markov chains, illustrating some situations under which our bounds work well.

\subsection{Pince-Nez Graph} \label{SubsecPinceNez}
We carefully study the symmetric random walk on $2m$-vertex ``pince-nez" graph mentioned in the Introduction. Fix $m \in \mathbb{N}$ and define $\Omega_{1} = [m]$, $\Omega_{2} = \{m+1, m+2,\ldots,2m\}$ and $\Omega = \Omega_{1} \cup \Omega_{2}$. This was also the partition considered in \cite{JSTV04}. We will show that its mixing time (and thus its relaxation time) is $O(m^{2})$. 

 For $x \neq y$ and $x, y \in \Omega_{1}$ or $x, y \in \Omega_{2}$, we set $K(x,y) = \frac{1}{6}$ if $|x - y | = 1$ or $\{ x, y \} \in \{ \{1,m\}, \{m+1, 2m \} \}$. We also set $K(1,m+1) = K(m+1,1) = \frac{1}{6}$. For all other $x \neq y$, we set $K(x,y) = 0$. To complete the definition of the kernel, set $K(x,x) = 1 - \sum_{y \in \Omega} K(x,y)$. \par 

We claim that $\tmix = O(m^{2})$. We will prove this using Lemma \ref{LemmaBasicMixing}. By Example 10.20 of \cite{LPW09},
\be \label{IneqPinceMix}
\varphi_{\max} = O(m^{2}).
\ee 
For all $T \in \mathbb{N}$, 
\be 
\max_{x \in \Omega_{2} }\P_{x}[\tau_{\{ 1 \} } > T+1] &\leq (1 - \frac{1}{6} \min_{x \in \Omega_{2}} \P_{x}[\tau_{\{m+1\}} \leq T]) \\
&= \frac{5}{6} + \frac{1}{6}  \max_{x \in \Omega_{2} }\P_{x}[\tau_{ \{ m+1 \}} > T] \leq \frac{5}{6} + \frac{1}{6} \max_{x \in \Omega_{2} } \frac{\E_{x}[\tau_{ \{ m+1 \}}]}{T}. \label{IneqPinceNezExp}
\ee 
By Example 10.20 of \cite{LPW09}, $\max_{x \in \Omega_{2}} \E_{x}[\tau_{ \{ m+1 \}}] = \max_{x \in \Omega_{1}} \E_{x}[\tau_{ \{ 1 \}}] = \Theta(m^{2})$, and so inequalities \eqref{IneqPinceNezExp} and \eqref{eqn:subexp} imply $\max_{x \in \Omega} \E_{x}[\tau_{ \{ 1 \} }] = O(m^{2})$. By the symmetry of the problem,
for all $T \in \mathbb{N}$
\be \label{IneqPincenezSymmetry}
\E_{1}[\kappa_{1}(T)] \geq \frac{T+1}{2}.
\ee 
This implies that, for all $T \in \mathbb{N}$
\be 
\min_{x \in \Omega} \E_{x}[\kappa_{1}(T)] \geq \frac{1}{2} \min_{x \in \Omega} \E_{x}[(T - \tau_{ \{1 \}})] = \frac{T}{2} - \Theta(m^{2}). 
\ee 
Applying this bound and Markov's inequality, we conclude that for all $C_{1} > 0$, there exists $C_{2} > 0$ so that 
\be \label{IneqPinceHit1}
\max_{x \in \Omega} \P[\kappa_{1}(C_{2} m^{2}) \leq C_{1} m^{2}] \leq \frac{1}{8}.
\ee 
By the symmetry of the problem, 
\be \label{IneqPinceHit2}
\max_{x \in \Omega} \P[\kappa_{2}(C_{2} m^{2}) \leq C_{1} m^{2}] \leq \frac{1}{8}
\ee 
for the same $C_{1}, C_{2}$. The conclusion that $\tmix = O(m^{2})$ follows immediately from applying  Lemma \ref{LemmaBasicMixing} with bounds \eqref{IneqPinceMix}, \eqref{IneqPinceHit1}, \eqref{IneqPinceHit2}.

This bound on the mixing time immediately implies that the relaxation time of the walk is $O(m^{2})$ as well. It is straightforward to check (\textit{e.g.}, by the central limit theorem) that the mixing time $\tmix$ of the walk satisfies $m^{2} = O(\tmix)$. From the symmetry of the problem and the fact that the relaxation time of simple random walk on the cycle is $\Theta(m^{2})$, the relaxation time of this walk is also at least on the order of $m^{2}$.

This example is, of course, simple enough to be analyzed directly. We include it to illustrate the fact that we can obtain qualitatively better bounds than previous decomposition bounds.

\subsection{Toy Version of the Kinetically Constrained Ising Process} \label{ExKcipToy}

Our interest in decomposition bounds was motivated by our study of the following kinetically constrained Ising process, which originated in \cite{AnFr84}:

\begin{example} [Kinetically Constrained Ising Processes (KCIP) and Partitions] \label{ExKcipCarefulStatement}

Fix a constant $c> 0$, a graph $G = (V,E)$ with $|V| = m$ vertices and a function $N \, : \, G \mapsto 2^{G}$. A {KCIP} associated with graph $G$ and density $p = \frac{c}{m}$ is a Markov chain $\{X_{t} \}_{t \geq 0}$ on $\Omega = \{0,1\}^{G} \backslash \{ (0,0,\ldots,0)\}$ that has a transition kernel defined by the following algorithm for constructing $X_{t+1}$ from $X_{t}$:
\begin{enumerate}
\item Choose a vertex $v \in V$ and a number $\lambda \in [0,1]$ uniformly at random.
\item \textbf{If} there exists $u \in V$ such that $u \in N(V)$ and $X_{t}[u] = 1$, set $X_{t+1}[v] = 1$ if $\lambda \leq p$ and set $X_{t+1}[v] = 0$ if $\lambda > p$. Set $X_{t+1}[w] = X_{t}[w]$ for all $w \neq v$. 
\item \textbf{Otherwise,} if $X_{t}[u] = 0$ for all $u \in V$ such that $(u, v) \in E$, set $X_{t+1}[w] = X_{t}[w]$ for all $w \in V$.  
\end{enumerate}

It is natural to try to analyze this Markov chain by partitioning $\Omega$ according to the number of non-adjacent particles, fixing $n$ and defining for $1 \leq k \leq n$
\be \label{EqKcipPartition}
\Omega_k = \big \{ X \in \{0,1\}^G \, : \, \sum_{v \in V} X[v] = k, \sum_{ u \in G} \sum_{v \in N(u)} X[u]X[v] = 0 \big \}
\ee 
as well as the `remainder' $\Omega' = \Omega \backslash \cup_{k=1}^{n} \Omega_{k}$. Here $n$ is taken to be $O(1)$. This is because, in the regime $p = {c \over m}$, the stationary distribution is concentrated around $\Omega_k$ for $k = O(1)$.

In \cite{PiSm15}, we show that for $G = \mathbb{Z}_{L}^{3}$ the $3$-dimensional lattice on $m = L^{3}$ vertices and $N(v) = \{ u \, : \, (u,v) \in E\}$ the usual neighborhood of a vertex, the mixing time of the restricted kernels $K_{i}$ of this chain are $O(m^{\frac{8}{3}})$ and the mixing time of the projected kernel $\overline{K}$ is $O(m^{3} \log(m))$. Using Lemma \ref{LemmaBasicMixing} of this paper, along with calculations similar to those in Theorem \ref{ThmDecompDrift}, Lemma \ref{LemmaSimpSupBdBad2}, and Corollary \ref{CorSimpSupBdBad2}, we obtain a mixing bound of $O(\max( m^{\frac{8}{3}}, m^{3} \log(m)) )$ for the KCIP chain on $\mathbb{Z}_{L}^{3}$. 
\end{example}

We now give a toy version of this process and explain why the methods in this note improve upon existing bounds. Our toy process has a laziness parameter $d \geq 1$ and a size parameter $m \in \mathbb{N}$; for fixed $d$, we consider the asymptotics of the mixing time as $m$ goes to infinity. For each $m \in \mathbb{N}$, the state space of the model is $\Omega = \{ (i,j) \, : \, i \in [m], j \in \{ 1, 2, 3 \} \}$ and we consider the partition $\Omega_{i} = \{ (i,1), (i,2), (i,3) \}$, $1 \leq i \leq m$, of $\Omega$. The transition kernel $K$ is given by:
\be 
K( (i,1), (i+1, 1) ) &= K( (i,1), (i,2) ) = \frac{1}{6}, \, K( (i,1), (i-1,1) ) = \frac{1}{3}, \\
 K( (i,2), (i,1) ) &= K( (i,2),(i,3)) = K((i,3),(i,2)) =  \frac{1}{6m^{d}},
\ee 
where the first and third expressions assume that $i < m$ and $i > 1$ respectively. For all other $(i_{1},j_{1}) \neq (i_{2},j_{2})$, $K((i_{1},j_{1}), (i_{2}, j_{2})) = 0$. Finally, we set $K(x,x) = 1 - \sum_{y \neq x} K(x,y)$. This completes the definition of the transition matrix. It is immediate that
\be \label{IneqKcipToyMix}
\varphi_{i} = \Theta(m^{d}).
\ee 
Denote by $\{X_{t} = (X_{t}[1], X_{t}[2])\}_{t \in \mathbb{N}}$ a Markov chain driven by $K$. We claim that, for any $\epsilon > 0$ and $m > N(\epsilon)$ sufficiently large,
\be \label{IneqKcipToyDrift}
\max_{x \in \Omega} \E[e^{\frac{1}{2}X_{t+ \epsilon m^{1+d}}[1]} | X_{t} = x] = O(1).
\ee 
 To prove this, let $\Omega_{\mathrm{lower}} \equiv \{ (i,1) \, : \, 1 \leq i \leq m \}$ and let  $\{ Y_{s} \}_{s \in \mathbb{N}}$ be the trace of $\{ X_{s} \}_{s \in \mathbb{N}}$ on $\Omega_{\mathrm{lower}}$. Under the identification $(i,1) \mapsto i$ of $\Omega_{\mathrm{lower}}$ with $[m]$, $\{ Y_{s} \}_{s \in \mathbb{N}}$ has transition kernel
\be 
K_{\mathrm{lower}}(i,i+1) = \frac{1}{6}, \, K_{\mathrm{lower}}(i,i-1) =
\frac{1}{3}, \, K_{\mathrm{lower}}(i,i) = \frac{1}{2} 
\ee 
for $i \neq \{1,m\}$. We can directly compute that $\{Y_{s} \}_{s \in \mathbb{N}}$ satisfies
\be 
\E[e^{\frac{1}{2}Y_{s+1}} | Y_{s}] \leq 0.98 e^{\frac{1}{2} Y_{s}} + 0.25.
\ee 
Let $\kappa_{\mathrm{lower}}(T) = | \{ t \leq s \leq t + T \, : \,  X_{s} \in \Omega_{\mathrm{lower}} \}$. By Corollary 2.11 of \cite{Paul14}, $\kappa_{\mathrm{lower}}(\epsilon m^{1+d}) = \Theta(\epsilon m)$ as $m$ goes to infinity. This implies
\be 
\E[e^{\frac{1}{2} X_{t + \epsilon m^{1+d}}[1]} | X_{t}] &= \E[e^{\frac{1}{2} Y_{\kappa(\epsilon m^{1+d})}} | X_{t} ] \\
&\leq (0.98 + o(1))^{\Theta(\epsilon m)} e^{\frac{1}{2} X_{t}} + \frac{0.25}{1-0.98},
\ee 
which proves inequality \eqref{IneqKcipToyDrift}. \par

Fix $C>0$. We consider the trace of $\{ X_{t} \}_{t \in \mathbb{N}}$ onto $\Omega' = \cup_{i=1}^{C} \Omega_{i}$. As each part $\Omega_{i}$ of the partition of $\Omega$ has three states, it is possible to check by direct computation that with the choice $c = \frac{1}{10}$ all of the constants $\delta,\epsilon,D$ in the conditions of Corollary \ref{CorSimpSupBdBad2} and Lemma \ref{LemmaSimpSupBdBad2} are $\Theta(1)$ as $m$ goes to infinity for this trace (where the implied constants depend on $C$). Thus, by Corollary \ref{CorSimpSupBdBad2} and Lemma \ref{LemmaSimpSupBdBad2}, the mixing time $\tmix'$ of the trace of $\{ X_{t} \}_{t \in \mathbb{N}}$ onto $\Omega' = \cup_{i=1}^{C} \Omega_{i}$ is at most $\tmix' = O(m^{d})$. By Theorem \ref{ThmDecompDrift} and inequality \eqref{IneqKcipToyDrift}, this implies that the mixing time $\tmix$ of $\{ X_{t} \}_{t \in \mathbb{N}}$ must be at most $\tmix = O(m^{1+d} \log(m))$. This also immediately implies that the relaxation time $\tau_{\mathrm{rel}} = O(m^{1+d} \log(m))$. \par
We compare this bound to the bounds achievable by \cite{JSTV04}. Recall that their projected chain $\overline{K}$ is given by Equation \eqref{EqDefProjChain}, while their restricted chains are given by
\be 
K_{i}(x,y) = K(x,y) \textbf{1}_{y \in \Omega_{i}}
\ee 
for $x \neq y$ and $K_{i}(x,x) = 1 - \sum_{y \in \Omega_{i}} K(x,y)$. In this example, the $m$ restricted chains have three points and the projected chain is an $m$-state, $(1-\Theta(m^{-d}))$-lazy birth and death chain corresponding to random walk on the path with constant drift. \par

These calculations let us compute the bound given by Theorem 1 of \cite{JSTV04}. Using the trivial bound $\gamma \leq 1$ on the correction term, these bounds imply that this walk has relaxation time $\tau_{\mathrm{rel}} = O(m^{2d+1})$, which implies that $\tmix = O(m^{2d+1} \log(m))$. For $d$ large, the difference between our bounds and those in \cite{JSTV04} is substantial. The discrepancy between the mixing bounds stems from the fact that our two main bounds, on the number of steps required to establish the drift condition inequality \eqref{IneqKcipToyDrift} and on the mixing time \eqref{IneqKcipToyMix} of the projected chains, are \textit{added} to obtain our final bound on the mixing time, while the corresponding bounds must be \textit{multiplied} to obtain the final bound in \cite{JSTV04}. We emphasize that the discrepancy between our bounds remains large even if we restrict our attention to the mixing time of the trace of  $\{ X_{t} \}_{t \in \mathbb{N}}$ onto $\Omega' = \cup_{i=1}^{C} \Omega_{i}$. \footnote{The authors of \cite{JSTV04} are aware of these issues, and give various refinements to their main inequality (see Equations 2 and 21 of \cite{JSTV04}). However, this refinement does not make a qualitative difference to the analysis of this example.  } \par

\subsubsection{Discussion of Other Interacting Particle Systems} \label{ExGenInt} 

The difficulties illustrated in the toy example in Section \ref{ExKcipToy} apply to more realistic interacting particle systems, including the KCIP models defined in Example \ref{ExKcipCarefulStatement}. We briefly sketch the problem, as complete proofs would be long. Fix a sequence of connected graphs $\{ G_{n} \}_{n \in \mathbb{N}}$ with $|G_{n}|=n$ and maximum degree $deg(G_{n}) \leq d$ for fixed $ 0 < d < \infty$; also fix a constant $0 < c < \infty$ as in that example.

Let $K$ be the transition kernel given in Example \ref{ExKcipCarefulStatement}, and let $\{K_{i}\}_{i=1}^{n}$, $\overline{K}$ be the projected and restricted kernels associated with $K$ and the partition given in Equation \eqref{EqKcipPartition}. Let $1-\lambda_{i}$, $1 - \overline{\lambda}$ and $1 - \lambda$ be the spectral gaps of $K_{i}$, $\overline{K}$ and $K$ respectively, and let $\pi$ be the stationary distribution of $K$. The calculation in Lemma 4.1 of \cite{PiSm15} implies that $\overline{K}(1,1) = 1 - O(n^{-3})$\footnote{Lemma 4.1 makes the assumption that the associated graph is undirected and triangle-free. However, the calculation goes through with only minor modification and gives the stated conclusion.} and $\pi(1) = \Theta(1)$, which in turn implies $ \overline{\lambda} = 1 - O(n^{-3})$. Next, note that $K_{1}(x,x) = 1 - O(n^{-2})$ for all $x \in \Omega_{1}$. In the special case that 
\be \label{EqSpecialCase}
\mathcal{N}(v) = \{ u \, : \, (u,v) \in E\},
\ee we have $\lambda_{1} = 1 - O(n^{-2} \max(n^{-1},( 1-\lambda_{n})))$, where $1 - \lambda_{n}$ is the spectral gap associated with the random walk on $G_{n}$\footnote{This follows from the fact that $K_{1}(x,x) = 1 - O(n^{-2})$ for all $x \in \Omega_{1}$, combined with Lemma 4.1 of \cite{PiSm15} and a standard comparison argument (see, \textit{e.g.}, \cite{DiSa93b})}. Finally, the correction term $\gamma$ defined in Equation 21 of \cite{JSTV04},
\be 
\gamma \equiv \max_{1 \leq i \leq m} \max_{x \in \Omega_{i}} \sum_{y \notin \Omega_{i}} K(x,y),
\ee 
satisfies $\gamma^{-1} = O(n^{2})$. 

Thus, the best possible bound obtainable by Theorem 1 of \cite{JSTV04} in the special case \eqref{EqSpecialCase} is $\frac{1}{1 - \lambda} = O(n^{3} \frac{1}{1 - \lambda_{n}})$. Even this bound is rather optimistic (for more complicated reasons, it is also not possible to obtain a bound better than $\frac{1}{1 - \lambda} = O(n^{4} )$ using a decomposition of the form \eqref{EqKcipPartition}), but is already quite far from the truth. For the main example studied in \cite{PiSm15},  the correct answer is at most $\frac{1}{1 - \lambda} = O(n^{3} \log(n))$, while this optimistic bound would give  $\frac{1}{1 - \lambda} = O(n^{\frac{11}{3}})$. Similar difficulties occur outside of the special case \eqref{EqSpecialCase} (see, \textit{e.g.}, \cite{chleboun2014relaxation} for the correct relaxation time for a KCIP that does not satisfy \eqref{EqSpecialCase} ).

\section*{Acknowledgements}
NSP is partially supported by an ONR grants. AMS is partially supported by an NSERC grant.
\bibliographystyle{alpha}
\bibliography{BLDBib}

\newpage

\section*{Appendix A}

Let $\{ X_{t} \}_{t \geq 0}$ be a $\frac{1}{2}$-lazy, irreducible, reversible Markov chain on a finite state space $\Omega$ with transition kernel $P$, and let $\{ Y_{t} \}_{t \geq 0}$ be the Markov chain with transition kernel $Q = \frac{1}{2}(\mathrm{Id} + P)$ and initial point $Y_{0} = X_{0} = x$.
Fix a subset $A \subset \Omega$ and let $\tau_{A} = \min \{ t > 0 \, : \, X_{t} \in A \}$ and $\tau_{A}' = \min \{ t > 0 \, : \, Y_{t} \in A \}$. Finally, let $\tmix$ denote the mixing time of $X_t$.
Theorem 1.1 of \cite{PeSo13} says that there exist universal constants $d_\alpha, d'_\alpha$ such that 
\be \label{eqn:PSact}
 d'_\alpha \max_{z,A: \pi(A) \geq \alpha} \E_z[\tau'_A] \leq \tmix \leq  d_\alpha \max_{z,A: \pi(A) \geq \alpha} \E_z[\tau_{A}'],
 \ee
 whereas our formulation in \eqref{eqn:PS13eqn} bounds $\tmix$ in terms of $\tau_A$. The following simple result will be used to show that our inequality \eqref{eqn:PS13eqn} is equivalent to Theorem 1.1 of \cite{PeSo13}.
%The following is enough to reduce inequality \eqref{eqn:PS13eqn} to Theorem 1.1 of \cite{PeSo13}:

\begin{lemma} \label{LemmaVersionOfPeso}
For any $A \subset \Omega$, there exists a universal constant $C$ that does not depend  on $P$, $A$ or $\Omega$ such that
\be \label{IneqEquivOfHittingTimes}
\E[\tau_{A}] \leq \E[\tau_{A}'] \leq 8 \E[\tau_{A}] + C.
\ee 
\end{lemma}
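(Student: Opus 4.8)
The plan is to relate the two hitting times $\tau_A$ (for $P$) and $\tau_A'$ (for $Q = \frac{1}{2}(\mathrm{Id}+P)$) pathwise, using the natural coupling in which $Y$ is obtained from $X$ by inserting i.i.d.\ geometric holding times. The lower bound $\E[\tau_A] \leq \E[\tau_A']$ is immediate and should be disposed of first: inserting extra lazy steps can only delay the chain, so under the coupling $\tau_A' \geq \tau_A$ almost surely, giving $\E[\tau_A] \le \E[\tau_A']$. This also uses our convention that $\tau_A = \min\{t>0 : X_t \in A\}$, so a step is always taken; the starting point $x$ being in $A$ causes no trouble.

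For the upper bound, I would set up the coupling explicitly. Let $\{G_s\}_{s \geq 1}$ be i.i.d.\ with $\P[G_s = k] = 2^{-k}$ for $k \geq 1$ (the number of times $Y$ repeats the $s$-th state of the $X$-trajectory), so that $Y$ visits the same sequence of distinct-step states as $X$ but dwells $G_s$ times at the $s$-th one. Then $\tau_A' = \sum_{s=1}^{\tau_A} G_s$: the chain $Y$ reaches $A$ exactly when the underlying $X$-trajectory does, after accumulating the corresponding holding times. Since $\tau_A$ is a stopping time for the filtration generated by $(X, G)$ and is independent of the $G_s$ with index beyond it in the appropriate sense, Wald's identity gives $\E[\tau_A'] = \E[\tau_A] \, \E[G_1] = 2\, \E[\tau_A]$. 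Strictly speaking one should check the measurability/independence structure carefully — $\tau_A$ depends only on the $X$-skeleton and not on the holding times $G_s$ — which makes Wald applicable and in fact yields the clean constant $2$, comfortably inside the claimed $8\,\E[\tau_A] + C$; the slack is there so one need not be delicate.

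The main obstacle, such as it is, is purely bookkeeping: making the coupling and the definition of $\tau_A'$ in terms of it precise enough that Wald's identity applies without circularity, i.e.\ being careful that the holding-time variables $G_s$ for $s \le \tau_A$ are independent of the event $\{\tau_A = s\}$ so that $\E[\sum_{s=1}^{\tau_A} G_s] = \E[\tau_A]\E[G_1]$. An alternative that avoids Wald entirely is to condition on $\tau_A$: $\E[\tau_A' \mid \tau_A = m] = 2m$ by linearity since the $G_s$ are i.i.d.\ with mean $2$ and (on this event) independent of which state is which, so $\E[\tau_A'] = 2\E[\tau_A]$ directly. Either way the constant $8$ and the additive $C$ are generous, so no sharp estimate is needed; the content of the lemma is just the observation that the lazification $Q$ slows hitting times down by a bounded factor.
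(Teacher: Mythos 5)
Your coupling is exactly the one the paper uses --- the paper realizes it by attaching i.i.d.\ Bernoulli$(\tfrac12)$ indicators $A_t$ to the $Q$-chain and extracting $X_t = Y_{M_t}$, which is the same thing as expanding each $P$-step into a Geometric$(\tfrac12)$ holding block --- but your finishing step is genuinely different and sharper. The paper avoids Wald entirely: it uses the inclusion $\{\tau_A' > 8t+a\} \subset \{\tau_A > t\}\cup\{N_{8t}\le t\}$, sums over $t$ and $a\in\{0,\dots,7\}$, and controls $\sum_t \P[N_{8t}\le t]$ by Hoeffding; that is where the factor $8$ and the additive constant $C$ come from. Your route --- conditioning on the $P$-skeleton, which is independent of the holding times, and applying Wald --- gives the exact relation $\E[\tau_A'] = 2\,\E[\tau_A]$ for $X_0\notin A$, hence the lemma with room to spare (even with $C=0$). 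The independence issue you flag is indeed unproblematic, since $\tau_A$ is a function of the skeleton alone and $\E[\tau_A]<\infty$ for an irreducible chain on a finite state space.

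One slip worth fixing: with the Appendix's convention $\tau_A=\min\{t>0 : X_t\in A\}$, your pathwise claims fail when $X_0=x\in A$. If $G_1\ge 2$ then $Y_1=X_0\in A$, so $\tau_A'=1$, which can be strictly smaller than $\tau_A$; likewise the identity $\tau_A'=\sum_{s=1}^{\tau_A}G_s$ degrades to $\tau_A'\le\sum_{s=1}^{\tau_A}G_s$ (still sufficient for the upper bound). So ``the starting point being in $A$ causes no trouble'' is true for the conclusion but not for your justification of the lower bound via almost-sure domination. The repair is one line: for $x\in A$, a first-step decomposition together with your factor-$2$ identity from starting points outside $A$ gives $\E_x[\tau_A'] = 1+\sum_{y\notin A}\tfrac12 P(x,y)\cdot 2\,\E_y[\tau_A] = \E_x[\tau_A]$, and combined with the pathwise bound $\tau_A'\ge\tau_A$, valid for $x\notin A$, this yields $\E[\tau_A]\le\E[\tau_A']$ in all cases. (Under the main-text convention $\tau_A=\min\{t\ge 0 : X_t\in A\}$ the edge case disappears, since then both times vanish for $x\in A$; the paper itself simply declares the lower bound trivial.)
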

\begin{proof}
The lower bound in inequality \eqref{IneqEquivOfHittingTimes} is trivial. To prove the upper bound, we introduce a version of $\{Y_{t} \}_{t \geq 0}$ on an augmented state space as follows. Let $\{ A_{t} \}_{t \geq 0}$ be an i.i.d. Bernoulli($\frac{1}{2}$) sequence, and then construct $\{ Y_{t} \}_{t \geq 0}$ by drawing from the kernel:
\be 
\P[Y_{t+1} \in \cdot |  A_{t} = 1, Y_{t}] &= P(Y_{t},\cdot) \\
\P[Y_{t+1} \in \cdot |  A_{t} = 0, Y_{t}] &= \delta_{Y_{t}}(\cdot). 
\ee 
This construction of $\{Y_{t} \}_{t \geq 0}$ has transition kernel $Q = \frac{1}{2}(\mathrm{Id} + P)$. Let $N_{t} = \sum_{s=0}^{t-1} A_{s}$ and let $M_{t} = \min \{ s > 0 \, : \, N_{s} =t \}$. We then construct $\{X_{t} \}_{t \geq 0}$ by setting
\be 
X_{t} = Y_{M_{t}};
\ee 
this construction of $\{ X_{t}\}_{t \geq 0}$ yields a Markov chain with transition matrix $P$. We then have, for all $0 \leq a < 8$ and all $t \in \mathbb{N}$ that 
\be 
\{ \tau_{A}' > 8t + a \}  \subset \{ \tau_{A} > t \} \cup  \{ N_{8t} \leq t \},
\ee 
and so
\be 
\P[\tau_{A}' > 8t + a] \leq \P[\tau_{A} > t] + \P[N_{8t} \leq t].
\ee 
Summing this over $t$ and $a$, we have 
\be 
\E[\tau_{A}'] &= \sum_{a=0}^{7} \sum_{t=0}^{\infty} \P[\tau_{A}' > 8t + a] \\
&\leq \sum_{a=0}^{7} \sum_{t=0}^{\infty} (\P[\tau_{A} > t] + \P[N_{8t} \leq t]) \\
&\leq 8 \E[\tau_{A}] + 8 \sum_{t=0}^{\infty} \P[N_{8t} \leq t]. 
\ee 
Since $\sum_{t=0}^{\infty} \P[N_{8t} \leq t] < \infty$ by Hoeffding's inequality, the upper bound in inequality \eqref{IneqEquivOfHittingTimes} follows. 
\end{proof}
Since $\max_{z, A \subset \Omega} \mathbb{E}(\tau_A) \geq 1$, from
 \eqref{IneqEquivOfHittingTimes} it immediately follows that 
 \be
 \max_{z,A} \E[\tau_{A}] \leq  \max_{z,A} \E[\tau_{A}'] \leq  C' \max_{z,A} \E[\tau_{A}] 
 \ee
 for some universal constant $C'>0$. This in turn combined with \eqref{eqn:PSact} immediately implies that
 \be
 c'_\alpha \max_{z,A: \pi(A) \geq \alpha} \E_z[\tau_A] \leq \tmix \leq  c_\alpha \max_{z,A: \pi(A) \geq \alpha} \E_z[\tau_{A}]
 \ee
 for some universal constants $c_\alpha, c'_\alpha$ as claimed in 
 Equation \eqref{eqn:PS13eqn}.

We also prove inequality \eqref{eqn:subexp2} from the introduction:

\begin{proof}[Proof of inequality \eqref{eqn:subexp2}]
Let $T = \max \{ t \in \mathbb{N}\, : \, \max_{x \in \Omega} \P_{x}[\tau_{A} > t] > e^{-1} \}$. Then Markov's inequality gives
\be 
\max_{x \in \Omega} \E_{x}[\tau_{A}] \geq T  \max_{x \in \Omega} \P_{x}[\tau_{A} > T] \geq e^{-1} T.
\ee 

Combining this with inequality \eqref{eqn:subexp}, we have for all $x \in \Omega$ and $k \in \mathbb{N}$,
\be \label{IneqEndOfShortProof}
\P_{x}[\tau_{A} > k e \max_{y \in \Omega}\E_{y}[\tau_{A}]] \leq \P_{x}[\tau_{A} > kT] \leq e^{-k}. 
\ee 
This immediately implies the desired inequality.
\end{proof}

\section*{Appendix B}
We prove a series of standard technical lemmas, leading to the proof of Theorem \ref{ThmDecompDrift}:

\begin{lemma}[Drift Implies Concentration] \label{LemmaDriftConc}
Let $K$ be the transition kernel of a Markov chain satisfying the conditions given in Theorem \ref{ThmDecompDrift}. Then
\be 
\pi(\mathcal{L}(M)) \geq 1 - \frac{b}{aM} \geq \frac{3}{4}.
\ee  
\end{lemma}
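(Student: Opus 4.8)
The plan is to exploit stationarity of $\pi$ to promote the $k$-step drift inequality \eqref{IneqDriftCondBasic} into an a priori bound on $\E_{\pi}[V]$, and then finish with Markov's inequality. First I would run the chain from stationarity, so that $X_{t} \sim \pi$. Since $\pi$ is stationary for $K$ it is also stationary for $K^{k}$, hence $X_{t+k} \sim \pi$ as well; in particular $\E_{\pi}[V(X_{t+k})] = \E_{\pi}[V(X_{t})] = \E_{\pi}[V]$, and this quantity is finite because $\Omega$ is finite and $V \leq V_{\max} < \infty$.

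Next I would integrate both sides of \eqref{IneqDriftCondBasic} against $\pi$ and use the tower property to remove the conditioning on $X_{t}$, obtaining $\E_{\pi}[V] \leq (1-a)\E_{\pi}[V] + b$. Since $a > 0$, rearranging gives the Foster--Lyapunov-type bound $\E_{\pi}[V] \leq b/a$.

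Then, noting that $\mathcal{L}(M)^{c} = \{\omega \in \Omega : V(\omega) > M\}$ and $V \geq 0$, Markov's inequality yields $\pi(\mathcal{L}(M)^{c}) \leq \E_{\pi}[V]/M \leq b/(aM)$, so $\pi(\mathcal{L}(M)) \geq 1 - b/(aM)$. Finally, the assumed lower bound on $M$ forces $b/(aM) \leq \tfrac{1}{4}$ (this is where one wants $M > 4b/a$, so that the last displayed inequality $1 - b/(aM) \geq \tfrac{3}{4}$ holds), completing the proof.

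There is no real obstacle here: this is the classical "drift implies a tail bound for the stationary measure" computation. The only points needing a word of care are the (trivial, on a finite state space) finiteness of $\E_{\pi}[V]$ and the remark that stationarity for $K$ passes to $K^{k}$, so that the lagged form of the drift condition can still be integrated against $\pi$. I would keep the write-up to the four short steps above.
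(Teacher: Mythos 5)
Your proof is correct and follows essentially the same route as the paper: integrate the drift condition \eqref{IneqDriftCondBasic} against $\pi$ (using that $\pi$ is also stationary for $K^{k}$) to get $\pi(V) \leq b/a$, then apply Markov's inequality and the lower bound on $M$. Your parenthetical remark that the hypothesis should read $M > 4b/a$ (rather than the $\frac{4a}{b} < M$ written in Theorem \ref{ThmDecompDrift}) is also the correct reading of the intended condition.
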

\begin{proof}
Let $\{ X_{t} \}_{t \in \mathbb{N}}$ be a Markov chain with transition kernel $K$, started at stationarity, \textit{i.e.}, $X_{0} \sim \pi$. Since $X_{k}$ then has distribution $\pi$ as well,
\be 
\pi(V) &= \E[\E[V(X_{k}) | X_{0}] ] \\
&\leq \E[(1-a)V(X_{0}) + b] \\
&= (1-a) \pi(V) + b.
\ee 
Thus, $\pi(V) \leq \frac{b}{a}$ 
and so by Markov's inequality,
\be 
\pi(\mathcal{L}(M)) \geq 1 - \frac{b}{aM}.
\ee 
Since $4a/b < M$, $\pi(\mathcal{L}(M)) \geq 1 - \frac{b}{aM} \geq 3/4$ and the proof is finished.
\end{proof}

\begin{lemma} \label{LemmaTechLem1}
Fix $0 < \beta \leq 1$ and $0 \leq \gamma < \infty$. Consider a stochastic process $\{ X_{t} \}_{t \in \mathbb{N}}$ with associated filtration $\mathcal{F}_{t}$ that satisfies the drift condition 
\be
\E[X_{s+1} \vert \mathcal{F}_{s}] \leq (1 - \beta) X_{s} + \gamma  
\ee
for all $s \in \mathbb{N}$. Let $Z_{0}, Z_{1}, \ldots$ be an i.i.d. sequence of random variables with geometric distribution and mean $\frac{2}{\beta}$. Then, if $X_{0} \leq \frac{4 \gamma}{\beta}$, we have for all $C, T \in \mathbb{N}$ that
\be 
\P \big[\sum_{s=0}^{T} \mathbf{1}_{X_{s} < \frac{4 \gamma}{\beta}} < C \big] \leq \P \big[\sum_{i=1}^{C} Z_{i} > T \big].
\ee 
\end{lemma}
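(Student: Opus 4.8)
Set $L := \tfrac{4\gamma}{\beta}$; we assume $\gamma>0$ (the only case needed here, since in Theorem \ref{ThmDecompDrift} one has $M>4a/b$, hence $b>0$) and, as in all applications of the lemma, that $X_s\ge 0$. The plan is to recognize the event on the left as the tail event of a sum of successive \emph{return times} of $\{X_t\}$ to the region $R=\{x<L\}$, and to show that each such return time is stochastically dominated, conditionally on its past, by one of the $Z_i$. First I would record the two one-step consequences of the drift condition: (i) whenever $X_t\ge L$ we have $\gamma=\tfrac\beta4 L\le\tfrac\beta4 X_t$, so $\E[X_{t+1}\mid\mathcal F_t]\le(1-\beta)X_t+\gamma\le(1-\tfrac{3\beta}4)X_t$; and (ii) whenever $X_t\le L$, $\E[X_{t+1}\mid\mathcal F_t]\le(1-\beta)L+\gamma=(1-\tfrac{3\beta}4)L$. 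Moreover this one-step bound propagates to any stopping time $\sigma$: decomposing over $\{\sigma=n\}\in\mathcal F_n$ and using the tower property gives $\E[X_{\sigma+1}\mid\mathcal F_\sigma]\le(1-\beta)X_\sigma+\gamma$ on $\{\sigma<\infty\}$.

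The core of the argument is the \textbf{excursion estimate}: for any stopping time $\sigma$ and any $k\ge 1$,
\be \label{IneqExcursionEstimatePlan}
\P\big[X_{\sigma+1}\ge L,\ldots,X_{\sigma+k}\ge L \ \big|\ \mathcal F_\sigma\big]\ \le\ \Big(1-\tfrac{3\beta}4\Big)^k\ \le\ \Big(1-\tfrac\beta2\Big)^k
\ee
on the event $\{X_\sigma\le L\}$. I would prove \eqref{IneqExcursionEstimatePlan} by setting $A_j=\cap_{i=1}^{j}\{X_{\sigma+i}\ge L\}$ (with $A_0=\Omega$) and $N_k=X_{\sigma+k}\mathbf 1_{A_{k-1}}$, noting $\mathbf 1_{A_{k-1}}$ is $\mathcal F_{\sigma+k-1}$-measurable, and showing by induction on $k$ that $\E[N_k\mid\mathcal F_{\sigma+k-1}]\le(1-\tfrac{3\beta}4)N_{k-1}$: this follows from the stopping-time one-step bound applied at $\sigma+k-1$ together with $\mathbf 1_{A_{k-1}}X_{\sigma+k-1}\le\mathbf 1_{A_{k-2}}X_{\sigma+k-1}$ (nonnegativity). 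Telescoping and the base case $\E[N_1\mid\mathcal F_\sigma]\le(1-\tfrac{3\beta}4)L$ on $\{X_\sigma\le L\}$ give $\E[N_k\mid\mathcal F_\sigma]\le(1-\tfrac{3\beta}4)^kL$ there, and then \eqref{IneqExcursionEstimatePlan} follows from $\mathbf 1_{X_{\sigma+k}\ge L}\le X_{\sigma+k}/L$ and Markov's inequality. I expect this to be the main obstacle, essentially because the lemma does \emph{not} assume $\{X_t\}$ is a Markov chain, so the usual strong-Markov shortcut for restarting at a stopping time is unavailable and the telescoping has to be run directly with the stopped $\sigma$-algebras.

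Finally I would assemble the conclusion. Define $\tau_0:=0$, $\tau_1=\min\{s\ge0:X_s<L\}$ and $\tau_{j+1}=\min\{s>\tau_j:X_s<L\}$, so that $\{\sum_{s=0}^{T}\mathbf 1_{X_s<L}<C\}=\{\tau_C>T\}$ (with $\tau_C=\infty$ if fewer than $C$ such times occur), and set $W_1=\tau_1$, $W_j=\tau_j-\tau_{j-1}$ for $j\ge2$. Since $X_0\le L$ and $X_{\tau_{j-1}}<L$ for $j\ge2$, applying \eqref{IneqExcursionEstimatePlan} at the stopping time $\tau_{j-1}$ (and treating the trivial possibility $\tau_1=0$ separately when $X_0<L$) yields $\P[W_j>k\mid\mathcal F_{\tau_{j-1}}]\le(1-\tfrac\beta2)^k$ for every $k\ge0$, which is exactly the tail of a geometric random variable with mean $2/\beta$. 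An elementary induction on $C$ — conditioning on $\mathcal F_{\tau_{C-1}}$, using $\P[W_C>m\mid\mathcal F_{\tau_{C-1}}]\le(1-\tfrac\beta2)^{\max(m,0)}$, and that $x\mapsto\P[Z_C>t-x]$ is nondecreasing — upgrades this conditional stochastic domination of the summands to $\tau_C=\sum_{j=1}^{C}W_j\preceq\sum_{i=1}^{C}Z_i$. Hence $\P[\tau_C>T]\le\P[\sum_{i=1}^{C}Z_i>T]$, which is the assertion of the lemma.
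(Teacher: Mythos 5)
Your proposal is correct and follows essentially the same route as the paper: bound the length of each excursion above the level $\tfrac{4\gamma}{\beta}$ geometrically by iterating the drift inequality and applying Markov's inequality, then stochastically dominate the sum of return times by $\sum_{i=1}^{C} Z_i$. Your handling of the restart at the stopping times $\tau_{j-1}$ (running the telescoping conditionally on $\mathcal{F}_{\tau_{j-1}}$ rather than appealing to uniformity in the initial condition) is in fact more careful than the paper's, which matters since the process is not assumed Markov.
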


\begin{proof}
Assume $X_{0} < \frac{4 \gamma}{\beta}$ and let $\tRe = \min \{ t > 0 \, : \, X_{t} < \frac{4 \gamma}{\beta} \}$. Then for all $s \geq 2$,
\be 
\E[X_{s} \textbf{1}_{\tRe \geq s}] &= \E[ \E[ X_{s} \textbf{1}_{\tRe \geq s} | \mathcal{F}_{s-1}]] \\
&\leq \E[ ((1 - \beta)X_{s-1} + \gamma)\textbf{1}_{\tRe \geq s}] \\
&= \E[ ((1 - \frac{\beta}{2})X_{s-1} - \frac{\beta}{2} X_{s-1} + \gamma)\textbf{1}_{\tRe \geq s}]\\
&\leq \E[ ((1 - \frac{\beta}{2})X_{s-1} - \frac{\beta}{2} \frac{4 \gamma}{\beta} + \gamma)\textbf{1}_{\tRe \geq s}] \\
&\leq (1 - \frac{\beta}{2}) \E[X_{s-1} \textbf{1}_{\tRe \geq s}].
\ee 
Iterating this inequality and noting that $\E[X_{1} \textbf{1}_{\tRe \geq s}] \leq (1 - \beta) \frac{4 \gamma}{\beta} + \gamma \leq \big(1 - \frac{\beta}{2} \big) \frac{4 \gamma}{\beta}$, we have
\be 
\E[X_{s} \textbf{1}_{\tRe \geq s}] \leq  \big(1 - \frac{\beta}{2} \big)^{s} \frac{4 \gamma}{\beta}, \\
\ee 
and so
\be  \label{IneqExpDom0}
\P[\tRe > s ] &\leq \P[X_{s} \textbf{1}_{\tRe \geq s} > \frac{4 \gamma}{\beta}]  \leq \big(1 - \frac{\beta}{2} \big)^{s}.
\ee
Define $t_{0} = 0$ and $t_{i+1} = \min \{ s > t_{i} \, : \, X_{s} \leq \frac{4 \gamma}{\beta} \}$. The fact that inequality \eqref{IneqExpDom0} holds uniformly in $X_{0}$ implies that
\be \label{IneqExpDom}
\P[t_{i+1} - t_{i} > s | \{ t_{j} \}_{j \leq i} ] \leq  \big(1 - \frac{\beta}{2} \big)^{s}.
\ee  
Then
\be \label{IneqExpDom2}
\P[\sum_{s=0}^{T} \textbf{1}_{X_{s} < \frac{4 \gamma}{\beta}} < C] \leq \P[t_{C} > T] = \P[\sum_{i=1}^{C} (t_{i} - t_{i-1}) > T]. \\
\ee 
Inequality \eqref{IneqExpDom} implies that the distribution of $(t_{i+1} - t_{i})$ is (conditionally on $\{ t_{j} \}_{j \leq i }$) stochastically dominated by a geometric distribution with mean $\frac{2}{\beta}$; combining this with inequality \eqref{IneqExpDom2} completes the proof.
\end{proof}

We apply this to show that the set $\mathcal{L}(C)$ defined in equation \eqref{EqDefSmallSets} has moderately large occupation measure for $C$ sufficiently large:

\begin{cor} \label{CorMainDriftResult}
Let $\{X_{t}\}_{t \in \mathbb{N}}$ be a Markov chain satisfying the conditions given in Theorem \ref{ThmDecompDrift} and let $\mathcal{L}$ be as in equation \eqref{EqDefSmallSets}. For fixed $C_{1}> \frac{4b}{a}, 0 \leq C_{2} \leq \frac{a}{8}$  and any starting point $X_{0}$ and time $U \in \mathbb{N}$,
\be 
\P[\sum_{t=0}^{U} \mathbf{1}_{X_{t} \in \mathcal{L}(C_{1})} < C_{2} U] \leq   \vmax e^{- \frac{a}{2} \lfloor \frac{U}{2k} \rfloor} + e^{- \frac{a U}{32} } .
\ee 
\end{cor}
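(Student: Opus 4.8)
The plan is to deduce this from Lemma~\ref{LemmaTechLem1} (``drift implies occupation''), after fixing two mismatches: the drift condition \eqref{IneqDriftCondBasic} is a $k$-step inequality while Lemma~\ref{LemmaTechLem1} concerns a one-step drift, and Lemma~\ref{LemmaTechLem1} requires the process to start inside the relevant sublevel set, whereas here $X_0$ is arbitrary. First, two reductions. Since $C_1 > 4b/a$ we have $\mathcal{L}(4b/a) \subseteq \mathcal{L}(C_1)$, so it suffices to prove the bound with $\mathcal{L}(C_1)$ replaced by the smaller set $\mathcal{L}(4b/a)$; and if $\vmax \le 4b/a$ then $\mathcal{L}(4b/a) = \Omega$ and the left-hand side is $0$ (using $C_2 \le a/8 < 1$), so we may assume $\vmax > 4b/a$. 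For $r \in \{0,1,\dots,k-1\}$ set $W^{(r)}_j := V(X_{jk+r})$; by \eqref{IneqDriftCondBasic} this satisfies $\E[W^{(r)}_{j+1}\mid \mathcal{F}_{jk+r}] \le (1-a)W^{(r)}_j + b$, i.e.\ the hypothesis of Lemma~\ref{LemmaTechLem1} with $\beta = a$, $\gamma = b$ and threshold $4b/a$. Since $\sum_{t=0}^U \mathbf{1}_{X_t \in \mathcal{L}(4b/a)} = \sum_{r=0}^{k-1} \#\{j \ge 0 : jk+r \le U,\ W^{(r)}_j \le 4b/a\}$, it is enough to show that, outside an event of the stated probability, each of the $k$ residue counts is at least $\lceil C_2 U/k\rceil$.

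For the burn-in (this produces the first term): let $\tau_r = \min\{j : W^{(r)}_j \le 4b/a\}$. Running the supermartingale estimate from the proof of Lemma~\ref{LemmaTechLem1} (cf.\ inequality \eqref{IneqExpDom0}), but started from $W^{(r)}_0 \le \vmax$ and using that on $\{\tau_r > j\}$ one has $W^{(r)}_j > 4b/a$, hence $b < \tfrac{a}{4}W^{(r)}_j$ and $(1-a)W^{(r)}_j + b < (1-\tfrac{a}{2})W^{(r)}_j$, gives $\E[W^{(r)}_j \mathbf{1}_{\tau_r > j}] \le (1-\tfrac a2)^j \vmax$ and hence $\P[\tau_r > j] \lesssim \vmax\, e^{-aj/2}$. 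Taking $j = \lfloor U/(2k)\rfloor$ yields the first term $\vmax e^{-\frac a2 \lfloor U/(2k)\rfloor}$ as a bound on the probability that some residue has not hit $\mathcal{L}(4b/a)$ by time $\lfloor U/2\rfloor$.

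For the occupation after burn-in (this produces the second term): on the complementary event, apply Lemma~\ref{LemmaTechLem1} to each residue chain started at sub-step $\tau_r$, over its remaining horizon $T_r \ge \lfloor U/(2k)\rfloor$ sub-steps, with $C = \lceil C_2 U/k\rceil$:
\[
\P\big[\#\{\,j \ge \tau_r:\ jk+r \le U,\ W^{(r)}_j \le 4b/a\,\} < C\big] \ \le\ \P\Big[\textstyle\sum_{i=1}^{C} Z_i > T_r\Big],
\]
where the $Z_i$ are i.i.d.\ geometric with mean $2/a$. Since $C_2 \le a/8$ we have $\E[\sum_{i=1}^C Z_i] = 2C/a \approx U/(4k)$, comfortably below $T_r$ in the regime where the claimed bound is non-trivial, so the right side is the probability that a sum of i.i.d.\ geometrics exceeds (roughly) twice its mean; a Chernoff bound for geometric sums makes this exponentially small. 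Summing the counts over $r$ gives $\sum_{t=0}^U \mathbf{1}_{X_t \in \mathcal{L}(4b/a)} \ge kC \ge C_2 U$, completing the deduction once the numerical constants are chosen so the exponent lands at $aU/32$.

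The step I expect to require the most care is this last one: obtaining the clean exponent $e^{-aU/32}$ with no stray factor of $k$. Naively union-bounding the Chernoff estimate over the $k$ residues gives $k\,e^{-\Theta(aU/k)}$, which matches $e^{-aU/32}$ only after absorbing the factor $k$ (harmless when the bound is non-trivial, since $U/k$ is then large). A cleaner route is to bypass the residue-by-residue union bound at this step and instead control the full-timescale quantity $\sum_{t=0}^U V(X_t)$ directly — note $\#\{t \le U : X_t \notin \mathcal{L}(4b/a)\} \le \tfrac{a}{4b}\sum_{t=0}^U V(X_t)$ — via an exponential supermartingale built from \eqref{IneqDriftCondBasic} and the boundedness $V \le \vmax$, handling the ranges of small and large $V$ separately; this yields the $e^{-\Theta(aU)}$ tail without a $k$ in front. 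Everything else is the routine ``drift $\Rightarrow$ occupation'' bookkeeping already packaged into Lemma~\ref{LemmaTechLem1} and the elementary estimates preceding it.
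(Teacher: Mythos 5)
Your two-phase skeleton (a burn-in estimate carrying the $\vmax$ prefactor, obtained from the multiplicative supermartingale, followed by Lemma \ref{LemmaTechLem1} and a Chernoff bound for sums of geometrics) is the same as the paper's, but your execution of the second phase does not deliver the inequality as stated. The paper does not split into residue classes: it waits for the single hitting time $\tau_{\mathrm{start}}$ of $\mathcal{L}(C_1)$, bounded via the $k$-step drift by $\P[\tau_{\mathrm{start}}>kt]\le \vmax(1-\tfrac a2)^t$, and then applies Lemma \ref{LemmaTechLem1} once, at the one-step scale, over the remaining $\ge U/2$ steps; that is where the clean $e^{-aU/32}$ comes from. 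Your residue-class route, carried out as described, proves only a bound of the shape $k\,\vmax\,e^{-\frac a2\lfloor U/(2k)\rfloor}+k\,e^{-\Theta(aU/k)}$: the burn-in must succeed for all $k$ residue chains simultaneously (a union bound you do not account for, which puts a factor $k$ on the first term), and the second exponent is $aU/k$, not $aU$. That loss sits in the exponent, not the prefactor, so it cannot be ``absorbed''; for large $k$ the estimate $k e^{-aU/(32k)}$ is genuinely weaker than the claimed $e^{-aU/32}$, it is not dominated by the first term either (compare it with $\vmax e^{-aU/(4k)}$), and since Corollary \ref{CorMainDriftResult} is invoked with explicit constants in the proof of Theorem \ref{ThmDecompDrift}, proving a weakened version is not enough.

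The repair you propose for this step is unsound. No estimate that uses only \eqref{IneqDriftCondBasic} and $V\le\vmax$ can give a $k$-free tail $e^{-\Theta(aU)}$ for $\sum_{t\le U}V(X_t)$, or for the occupancy deficit. Consider the chain whose state records a phase in $\{0,\dots,k-1\}$ and a level, the phase advancing cyclically and the level being redrawn at each phase wrap, independently equal to $\vmax$ with probability $q=b/\vmax$ and to $0$ otherwise, with $V$ the current level. Then $\E[V(X_{t+k})\mid X_t]=q\vmax\le b$, so \eqref{IneqDriftCondBasic} holds for every $a\le 1$; yet the event that all $\lceil U/k\rceil$ levels seen up to time $U$ equal $\vmax$ has probability $(b/\vmax)^{\lceil U/k\rceil}$, and on it $\sum_{t\le U}V(X_t)=(U+1)\vmax$ while the occupancy of $\mathcal{L}(C_1)$ is $0$. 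For fixed $a,b,\vmax$ and large $k$ this probability vastly exceeds $e^{-\Theta(aU)}$. (The example is not reversible or lazy, but your supermartingale argument uses nothing beyond the drift and the bound on $V$, so it would have to apply to it; and it does not contradict the corollary itself, since this event is absorbed by the first, $\vmax$-weighted term, $\log(\vmax/b)\ge\log(4/a)\ge a/4$.) So there is a genuine gap: you correctly flagged the one-step versus $k$-step mismatch that the paper's own invocation of Lemma \ref{LemmaTechLem1} passes over silently, but neither the residue-by-residue bookkeeping nor the proposed full-timescale supermartingale closes it to the stated bound.
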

\begin{proof}

Let $\tau_{\mathrm{start}} = \min \{ t > 0 \, : \, X_{t} \in \mathcal{L}(C_{1}) \}$. By inequality \eqref{IneqDriftCondBasic}, we have for any $X_{t}$ satisfying $V(X_{t}) > C_{1} > \frac{4b}{a}$ that 
\be 
\E[V(X_{t+k}) | X_{t}] &\leq (1 - a) V(X_{t}) + b\\
&\leq (1 - \frac{a}{2}) V(X_{t}) - \frac{a}{2} V(X_{t}) + b \\
&\leq (1 - \frac{a}{2}) V(X_{t}) - b \leq (1 - \frac{a}{2}) V(X_{t}).
\ee 
By Markov's inequality and the trivial bound that $V_{t} \leq \vmax$ for all $t \in \mathbb{N}$, this implies 
\be
\P[\tau_{\mathrm{start}} > kt] &\leq \P[V_{k t} \textbf{1}_{\tau_{\mathrm{start}} > kt} > C_{1} ]\\
&\leq \vmax \big(1 - \frac{a}{2} \big)^{t}.  \label{IneqLemmaTechLem2Start}
\ee 

Fix $T \leq U \in \mathbb{N}$ and let  $\{ Z_{i}\}_{i \in \mathbb{N}}$ be an i.i.d. sequence of random variables with geometric distribution and mean $\frac{2}{\alpha}$ . By inequality \eqref{IneqLemmaTechLem2Start}, the Markov property and Lemma \ref{LemmaTechLem1},
\be 
\P[ & \sum_{t=0}^{ U} \textbf{1}_{X_{t} \in \mathcal{L}(C_{1})} > C_{2} U] \geq \P[\sum_{t=T}^{ U} \textbf{1}_{X_{t} \in \mathcal{L}(C_{1})} > C_{2}  U | \tau_{\mathrm{start}} < T] \P[\tau_{\mathrm{start}} < T] \\
& = \P[\tau_{\mathrm{start}} < T]  \sum_{t=0}^{T} \P[\sum_{s=t}^{ U} \textbf{1}_{X_{s} \in \mathcal{L}(C_{1})} > C_{2} U | \tau_{\mathrm{start}} = t] \P[\tau_{\mathrm{start}} = t | \tau_{\mathrm{start}} \leq T]\\
&\geq \big(1 - \vmax \big(1 - \frac{1}{2} a \big)^{\lfloor\frac{T}{k } \rfloor} \big)  \sum_{t=0}^{T} \P \big[\sum_{i=1}^{C_{2} U} Z_{i} \leq U - t \big] \P[\tau_{\mathrm{start}} = t | \tau_{\mathrm{start}} \leq T] \\
&\geq \big(1 - \vmax \big(1 - \frac{1}{2} a \big)^{\lfloor\frac{T}{k } \rfloor} \big)   \P \big[\sum_{i=1}^{C_{2} U} Z_{i} \leq U - T \big] .
\ee 
Choosing $T = \lfloor \frac{U}{2}  \rfloor$, we have for $C_{2} > \frac{a}{8}$ that
\be 
\P[  \sum_{t=0}^{ U} \textbf{1}_{X_{t} \in \mathcal{L}(C_{1})} > C_{2} U] \geq \big(1 - \vmax e^{- \frac{a}{2} \lfloor \frac{U}{2k} \rfloor}  \big) \big( 1 - e^{- \frac{a U}{32} } \big).
\ee  
The second part of the above inequality is standard concentration inequality for geometric random variables. 
%(see e.g. Theorem 1 of \cite{BrownS}). 
This completes the proof. \end{proof}

Theorem \ref{ThmDecompDrift} now follows immediately from Lemmas \ref{LemmaBasicMixing} and \ref{LemmaDriftConc} and Corollary \ref{CorMainDriftResult}:

\begin{proof} [Proof of Theorem \ref{ThmDecompDrift}]
We apply Lemma \ref{LemmaBasicMixing}, choosing in the notation of that lemma $[n] = 2$, $\Omega_{1} = \mathcal{L}(M)$, $\Omega_{2} = \Omega \backslash \Omega_{1}$, $I = \{ 1 \}$, $\alpha = \frac{3}{8}$, and $\beta = \frac{3}{4}$. Lemma \ref{LemmaDriftConc} implies that this choice of $I$ and $\beta$ satisfies the requirements of Lemma \ref{LemmaBasicMixing}. \\
Set $\gamma = \frac{1}{6}$. In the notation of Corollary \ref{CorMainDriftResult}, choosing  
\be
U > T \equiv \frac{4}{a} \max(\frac{16}{ c_{\gamma}'} \tmix', k \log(16 V_{\max}), 8 \log(16)),
\ee
$C_{1} = M$ and $C_{2} = \frac{8 \tmix'}{c_{\gamma}' T}$   gives
\be 
\P[\kappa_{1}(U) < \frac{8}{  c_{\gamma}'} \tmix'] \leq \frac{1}{8}.
\ee  
The result then follows immediately from Lemma \ref{LemmaBasicMixing}.
\end{proof}
\section*{Appendix C} 
We prove the claims made in Example \ref{RemContCondTrace2}, with the ultimate goal of applying Theorem \ref{ThmContCondHighDim}. 
\begin{lemma} [Coupling to One Point] \label{LemmaCoupOnePointSupp}
Consider a Markov chain $\{X_{t} \}_{t \geq 0}$ with transition kernel $K$ and stationary distribution $\pi$ on a finite state space $\Omega$ with privileged point $z \in \Omega$. Let $\tau = \min \{ t \geq 0 \, : \, X_{t} = z \}$. Assume that
\be 
\max_{x \in \Omega} \E[\tau | X_{0} = x] \leq T, \, \pi(z) \geq 1- \epsilon 
\ee 
for some $\epsilon < \frac{1}{4}$. Then the mixing time $\tmix$ of $K$ satisfies 
\be 
\tmix \leq \lceil e \, T \rceil \lceil \log(\frac{1 - 4 \epsilon}{4(1 - \epsilon)}) \rceil.
\ee 
\end{lemma}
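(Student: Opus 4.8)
The plan is to build an explicit coupling of the chain started from an arbitrary state $x$ with a copy started from $\pi$, using the privileged point $z$ as a rendezvous state, and then to read the mixing time off the probability that the two copies have not yet met. Set $t_0 = \lceil eT\rceil$. Since $\max_x \E_x[\tau]\le T$, inequality \eqref{eqn:subexp2} applied to the set $\{z\}$ gives $\max_x \P_x[\tau > eT]\le e^{-1}$, hence $\max_x \P_x[\tau > t_0]\le e^{-1}$, and then \eqref{eqn:subexp} yields $\max_x \P_x[\tau > Kt_0]\le e^{-K}$ for every $K\in\mathbb N$. In words: wherever a window of $t_0$ consecutive steps begins, the chain visits $z$ inside it with probability at least $1-e^{-1}$.

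Next I would set up the coupling. Let $\{X_t\}_{t\ge 0}$ be the chain from $x$ and let $\{\tilde Y_t\}_{t\ge 0}$ be an independent copy started from $\pi$; put $\rho=\min\{t\ge 0 : X_t = z = \tilde Y_t\}$, and define $Y_t = \tilde Y_t$ for $t\le\rho$ and $Y_t = X_t$ for $t>\rho$. A routine strong-Markov argument (both copies sit at $z$ at time $\rho$, so splicing does not alter the conditional law of the future) shows that $\{Y_t\}$ is again a stationary copy of the chain, so the coupling inequality gives $\|K^t(x,\cdot)-\pi\|_{\TV}\le \P[X_t\ne Y_t]\le \P[\rho>t]$ for every $t$, uniformly in $x$; since the total variation distance to $\pi$ is non-increasing along the chain it then suffices to make $\P[\rho>Kt_0]$ smaller than $\tfrac14$ for one well-chosen $K$. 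To estimate $\P[\rho>Kt_0]$ I would condition on $\sigma(\{X_t\})$: if $X$ makes no visit to $z$ during $(0,Kt_0]$ — a $\sigma(X)$-event of probability at most $e^{-K}$ by Step~1 — then $\rho>Kt_0$ automatically; otherwise, choosing any $X$-visit time $s^{*}\in(0,Kt_0]$ to $z$, the event $\{\rho>Kt_0\}$ forces $\tilde Y_{s^{*}}\ne z$, and because $s^{*}$ is $\sigma(X)$-measurable while $\tilde Y$ is independent of $X$ with $\tilde Y_{s^{*}}\sim\pi$, this has conditional probability $\pi(\{z\}^c)\le\epsilon$. Taking expectations yields $\P[\rho>Kt_0]\le \epsilon + e^{-K}$, and a more careful accounting of the many rendezvous opportunities produced by $X$'s repeated returns to $z$ sharpens the $\epsilon$ term. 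Choosing $K$ to be the least integer with the right-hand side below $\tfrac14$, i.e. $K=\lceil\log\frac{4(1-\epsilon)}{1-4\epsilon}\rceil$, then gives $\tmix\le Kt_0 = \lceil eT\rceil\lceil\log\frac{4(1-\epsilon)}{1-4\epsilon}\rceil$, matching the statement (the fraction in the lemma is to be read this way up, so that the bound diverges as $\epsilon\uparrow\tfrac14$).

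The only genuinely delicate point is the coupling step: one must verify with care that the spliced process $\{Y_t\}$ is a bona fide stationary copy of the chain (so the coupling inequality applies), and that the conditioning on $\sigma(X)$ in the last step is legitimate — the crux being that $\tilde Y$ evaluated at the random $X$-visit times to $z$ is still exactly $\pi$-distributed, precisely because those times are $\sigma(X)$-measurable and $\tilde Y$ is independent of $X$. The hitting-time input and the final arithmetic are routine given the results already recorded in the paper, and the boundary case $x=z$ (where one works with the first return time) is handled by the same estimate up to a harmless adjustment of $t_0$.
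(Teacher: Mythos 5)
Your argument is correct and is essentially the paper's own proof: couple the chain from $x$ with an independent stationary copy, use the subgeometric tail of the hitting time of $z$ together with $\pi(z)\ge 1-\epsilon$ to force a meeting at $z$ within a bounded number of blocks of length $\lceil e\,T\rceil$, and finish with the coupling inequality (the paper phrases this via the collision time of the independent-until-meeting coupling rather than your splice at $z$, but the underlying estimate is the same). Two small points you already anticipated: the fraction inside the logarithm must indeed be read as $\log\frac{4(1-\epsilon)}{1-4\epsilon}$ (as printed in the lemma the logarithm is negative), and the ``more careful accounting'' you invoke is simply retaining the factor $\P[\tau\le K\lceil e\,T\rceil]$ on the $\epsilon$ term, which gives $\P[\text{no meeting}]\le \epsilon+(1-\epsilon)e^{-K}$ and hence exactly the stated choice of $K$.
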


\begin{proof}
Fix $x \in \Omega$. Let $\{X_{t} \}_{t \geq 0}$ be a copy of the Markov chain started at $X_{0} = x$, and let $\{Y_{t}\}_{t \geq 0}$ be a copy of the Markov chain started according to the stationary distribution, so that $Y_{0} \sim \pi$. Let $\tau_{\mathrm{coll}} = \min \{ t \geq 0 \, : \, X_{t} = Y_{t} \}$ be the collision time of $\{ X_{t} \}_{t \geq 0}, \{Y_{t}\}_{t \geq 0}$. We couple these two chains so that they move independently until time $\tau_{\mathrm{coll}}$ and satisfy $X_{s} = Y_{s}$ for all $s \geq \tau_{\mathrm{coll}}$. By inequality \eqref{eqn:subexp2}, we then have for all $t \in \mathbb{N}$,
\be 
\P[\tau_{\mathrm{coll}} \leq t] &\geq \P[\tau \leq t]\P[X_{\tau} = Y_{\tau} | \tau \leq t] \\
&\geq  (1 - e^{-\lfloor \frac{t}{e \, T} \rfloor})  (1-\epsilon).
\ee 
By the standard `coupling lemma' for Markov chains (see Prop 4.7 of \cite{LPW09}),
\be 
\tmix \leq \min \{t > 0 \, :\, \P[\tau_{\mathrm{coll}} \leq t] \geq \frac{3}{4} \} \leq \lceil e \, T \rceil \lceil \log(\frac{1 - 4 \epsilon}{4(1 - \epsilon)}) \rceil
\ee 
and the proof is finished.
\end{proof}
As mentioned before, we are interested in calculating the mixing time in Example \ref{RemContCondTrace2} with $k,\ell$ and $C > 6$ held constant and $m$ going to infinity.  Theorem \ref{ThmContCondHighDim} can also be applied when $C = C(m) \rightarrow \infty$ using similar (and indeed much easier) bounds; when $C = C(m) = o(1)$, we expect  Theorem \ref{ThmContCondHighDim} to become ineffective. In order to apply Theorem \ref{ThmContCondHighDim}, we must prove a contraction inequality of the form \eqref{IneqContractionAssumption} and also an occupation inequality of the form \eqref{IneqRegularityAssumptions2}.

We assume for the remainder of this section that $m \gg \ell$. We begin by proving the contraction estimate \eqref{IneqContractionAssumption}. Fix $x \in \Omega^{(k)}_{\emptyset}$, let $\{X_{t}\}_{t \geq 0}$ be a copy of the Markov chain evolving according to $\tilde{K}$ and started at $X_{0} = x$, and let $\tau_{\mathrm{centre}} = \min \{t > 0 \, : \, X_{t} = (0,0,\ldots,0) \}$. We begin by comparing $\tau_{\mathrm{centre}}$ to $\tEsc[\emptyset]$. For any $x \in \Omega^{(k)}_{\emptyset} \backslash \{(0,0,\ldots,0)\}$,
\be \label{IneqDriftCondToyHighDim1}
\E[H(X_{t+1}) | X_{t} = x] \leq H(x) + \frac{2}{3} e^{-C \log(m)} - \frac{1}{3m}. 
\ee 
Since $|H(X_{t+1}) - H(X_{t})| \leq 1$, this inequality implies by the classical  `gambler's ruin' calculation that
\be \label{IneqGamblersRuinToyHighDim}
\min_{x \in \Omega^{(k)}_{\emptyset} }\P_{x}[\tau_{\mathrm{centre}} \leq \tEsc[\emptyset]] \geq 1 - \big( \frac{8m}{3} e^{-C \log(m)} \big)^{k}. \\
\ee 
 For $z \subset [m]$, let $f(z)$ be the unique point in $\Omega_{z}$ that satisfies $H(f(z)) = 0$;  for example, $f(\emptyset) = (0,0,\ldots,0)$. Let $\{X_{t}\}_{t \geq 0}$, $\{Y_{t} \}_{t \geq 0}$ be two Markov chains evolving according to $\tilde{K}$, with $X_{0} = x \in \Omega_{\emptyset}^{(k)}$ arbitrary and $Y_{0} = (0,0,\ldots,0)$. Let $\tEsc[\emptyset]^{(x)}$, $\tEsc[\emptyset]^{(0)}$ be their respective escape times. By inequality \eqref{IneqGamblersRuinToyHighDim}, 
\be \label{IneqIrrelevanceOfStartingPointAppendixC}
\| \mathcal{L}(X_{\tEsc[\emptyset]^{(x)}}) - \mathcal{L}(Y_{\tEsc[\emptyset]^{(0)}}) \|_{\mathrm{TV}} \leq \P_{x}[\tau_{\mathrm{centre}} > \tEsc[\emptyset]] \leq  \big( \frac{8m}{3} e^{-C \log(m)} \big)^{k}.
\ee 
The kernel $\overline{K}_{\mathrm{LL}}$ associated with base kernel $\overline{K}$ and partition $\Omega^{(k)} = \sqcup_{z \subset [m]} \Omega_{z}^{(k)}$ is exactly $\frac{1}{2}$-lazy simple random walk on the hypercube $\{0,1\}^{m}$ (see equation \eqref{EqDefProjChainLL} for the construction of $\overline{K}_{\mathrm{LL}}$). It is straightforward to check via a path-coupling argument (or see the proof of Theorem 15.1 of \cite{LPW09} at inverse-temperature $\beta =0$) that this kernel satisfies the contraction condition
\be 
\max_{x,y \in \{0,1\}^{m}} \frac{W_{d}(\overline{K}_{\mathrm{LL}}(x,\cdot), \overline{K}_{\mathrm{LL}}(y,\cdot))}{d(x,y)} \leq 1 - \frac{1}{m}.
\ee 

Thus, by inequality \eqref{IneqIrrelevanceOfStartingPointAppendixC} the kernel $\tilde{K}$ satisfies inequality \eqref{IneqContractionAssumption} with constants 
\be [EqContAssumptionToyHighDimConc]
\alpha &= 1 - \frac{1}{m} - 2 \big( \frac{8m}{3} e^{-C \log(m)} \big)^{k} \\
\beta &= 2 \big( \frac{8m}{3} e^{-C \log(m)} \big)^{k}.
\ee

We now prove occupation inequalities of the form \eqref{IneqRegularityAssumptions2}. This requires estimates of the escape time $\tEsc[\emptyset]$ and the mixing time $\varphi_{\emptyset}$. By the same calculations that give inequality \eqref{IneqDriftCondToyHighDim1}, we have for $D > 0$ that
\be [IneqDriftIterHighDimToyPre]
\E[e^{D H(X_{t+1})} | X_{t} = x \in \Omega^{(k)}_{\emptyset} \backslash \{(0,0,\ldots,0)\}] &\leq e^{D H(X_{t})} (1 + \frac{2}{3} e^{-C \log(m)} (e^{D} - 1) + \frac{1}{3m} (e^{-D} - 1)) \\
\E[e^{D H(X_{t+1})} | X_{t} = (0,0,\ldots,0)] &\leq \frac{2}{3} e^{-C \log(m)} e^{D}.
\ee 
Setting $D = \frac{C \log(m)}{2}$ and iterating, we have
\be 
\E[e^{\frac{C}{2} H(X_{t}) \log(m)} | X_{0} = x] &\leq (1 - \frac{1}{12m}) \E[e^{\frac{C}{2} H(X_{t-1}) \log(m)}  | X_{0} = x] + \frac{2}{3} e^{-\frac{C}{2} \log(m)}\\
&\leq (1 - \frac{1}{12m})^{t} e^{\frac{C}{2} (\ell-k) \log(m)} + 8m e^{- \frac{C}{2} \log(m)} \label{IneqDriftIterHighDimToy}
\ee 
for all $x \in \Omega_{\emptyset}^{(k)}$. 
%By Markov's inequality, this implies  
%\be 
%\sup_{x \in \Omega^{(k)}} \P[\tEsc[\emptyset] = t] &\leq e^{\frac{C}{2} \ell \log(m)} ((1 - \frac{1}{12m})^{t} e^{\frac{C}{2} (\ell-k) \log(m)} + 8n e^{- \frac{C}{2}\log(m)} ) \\
%&\leq 2 e^{-\frac{Ck}{2} \log(m)}.
%\ee 
Iterating inequality \eqref{IneqDriftIterHighDimToyPre} again and applying Markov's inequality gives
\be
\sup_{x \in \Omega_{\emptyset}^{(k)}} \P_{x} [\tau_{\mathrm{centre}} > t] &\leq \sup_{x \in \Omega_{\emptyset}^{(k)}} \E_{x}[e^{\frac{C}{2} H(X_{t}) \log(m)} \textbf{1}_{\tau_{\mathrm{centre}} > t}] e^{-\frac{C}{2} \log(m)} \\
&\leq  (1 - \frac{1}{12m})^{t} e^{\frac{C}{2} (\ell-k) \log(m)} e^{-\frac{C}{2} \log(m)}. \label{IneqHighDimToyPreCoup1}
\ee 
By Lemma \ref{LemmaDriftConc}, it also gives
\be \label{IneqHighDimToyPreCoup2}
\frac{\pi(f(\emptyset))}{\pi(\Omega_{\emptyset})} \geq 1 - 96 m^{2} e^{-C \log(m)}.
\ee

By inequalities \eqref{IneqHighDimToyPreCoup1} and \eqref{IneqHighDimToyPreCoup2}, $\tilde{K}_{\emptyset}$ satisfies the hypotheses of Lemma \ref{LemmaCoupOnePointSupp} with $T = \frac{\log(2) + \frac{C \log(m)}{2} (\ell-k-1)}{\log(1 - \frac{1}{12m})} m$ and $\epsilon = 96 m^{2} e^{-C \log(m)}$. Thus, for sufficiently large $m$, 
\be 
\varphi_{\emptyset} &\leq  2 \lceil e \,  \frac{\log(2) + \frac{C \log(m)}{2} (\ell-k-1)}{\log(1 - \frac{1}{12m})} m \rceil  \\
&\leq 18 C m (\ell - k + 1) \log(m). \label{IneqToyHighDim1BoundOnVarPhiMax}
\ee 
The symmetry of the problem implies that $\varphi_{\max} = \varphi_{\emptyset}$. This gives the desired upper bound on $\varphi_{\max}$. To obtain a lower bound on $\varphi_{\emptyset}$, we note from inequality \eqref{IneqHighDimToyPreCoup2} that 
\be 
\varphi_{\emptyset} \geq \frac{1}{8} \max_{x \in \Omega_{\emptyset}^{(k)}} \E_{x}[\tau_{\mathrm{centre}}].
\ee 
By considering the number of steps it takes to get to $(0,0,\ldots,0)$, we have $\max_{x \in \Omega_{\emptyset}^{(k)}} \tau_{\mathrm{centre}} \geq m(\ell-k-1)$. By the standard `coupon collector' argument, we also have $\max_{x \in \Omega_{\emptyset}^{(k)}} \E_{x}[\tau_{\mathrm{centre}}] \geq \frac{1}{4} m \log(m)$. Combining these two bounds with inequality \eqref{IneqToyHighDim1BoundOnVarPhiMax},
\be \label{IneqToyHighDimBothBoundVarPhiMix}
18C m (\ell-k+1) \geq \varphi_{\emptyset} \geq \frac{m}{8} \max( \ell-k-1, \frac{1}{4} \log(m)).
\ee 

Finally, we point out that the escape time $\tEsc[\emptyset]$ is often close to $\E[\tEsc[\emptyset] | X_{0} = (0,\ldots,0)]$. In one direction, the symmetry of the problem and the fact that the Markov chain only changes a single coordinate by 1 at every step together imply

\be 
\E[\tEsc[\emptyset] | X_{0} = z \in \Omega_{\emptyset}^{(k)}] \leq \E[\tEsc[\emptyset] | X_{0} = (0,0,\ldots,0)].
\ee 
 Thus, for any $c \in \mathbb{N}$, inequality \eqref{eqn:subexp2} implies
\be \label{IneqConcTEscUpper}
\P[\tEsc[\emptyset] > c \, e \, \E[\tEsc[\emptyset] | X_{0} = (0,0,\ldots,0)] | X_{0}  = z \in \Omega_{\emptyset}^{(k)}] \leq e^{-c}.
\ee 
In the other direction, for all $t > 0$ we have
\be 
\max_{z \in \Omega_{\emptyset}^{(k)}} \P[\tEsc[\emptyset] < t | X_{0} = z] &\leq \P[\tEsc[\emptyset] < t | X_{0} = (0,\ldots,0)] + \max_{z \in \Omega_{\emptyset}^{(k)}}  \P_{z}[\tEsc[\emptyset] < \tau_{\mathrm{centre}}]. 
\ee
This bound, together with inequality \eqref{IneqGamblersRuinToyHighDim}, implies
\be \label{IneqConcTEscLower} 
\max_{z \in \Omega_{\emptyset}^{(k)}} \P[\tEsc[\emptyset] < \frac{1}{8} \E[\tEsc[\emptyset] | X_{0} = (0,0,\ldots,0)] | X_{0} = z] \leq \frac{1}{4} +  \big( \frac{8m}{3} e^{-C \log(m)} \big)^{k}.
\ee 

Thus, inequalities \eqref{IneqConcTEscUpper} and \eqref{IneqConcTEscLower} give for large $m$
\be [IneqConcTEscAll]
\max_{z \in \Omega_{\emptyset}^{(k)}} \P[\tEsc[\emptyset] < \frac{1}{8} \E[\tEsc[\emptyset] | X_{0} = (0,0,\ldots,0)] | X_{0} = z] &\leq \frac{1}{2} \\
\max_{z \in \Omega_{\emptyset}^{(k)}} \P[\tEsc[\emptyset] > 4 \E[\tEsc[\emptyset] | X_{0} = (0,0,\ldots,0)] | X_{0} = z] &\leq \frac{1}{2}. \\
\ee 

We point out that the expected escape time $\E[\tEsc[\emptyset] | X_{0} = (0,0,\ldots,0)]$ is large compared to the mixing time $\varphi_{\emptyset}$.  By the monotonicity of the chain and inequality \eqref{IneqHighDimToyPreCoup2}, it also gives
\be \label{IneqIntermediateHittingLowerBound}
\P[\tEsc[\emptyset] < t | X_{0} = (0,0,\ldots,0)] &\leq  t \max_{0 \leq s \leq t} \P[H(X_{s}) \geq \ell - k | X_{0} = (0,0,\ldots,0) ] \\
&\leq t \max_{0 \leq s \leq t} e^{-\frac{C}{2}( \ell - k) \log(m)} \E[e^{\frac{C}{2} H(X_{s}) \log(m)} | X_{0} = (0,0,\ldots,0) ] \\
&\leq t e^{-\frac{C}{2} (\ell - k) \log(n)}  \big( 96 m^{2} e^{-C \log(m)} e^{\frac{C}{2} (\ell -k)} \big) = 96 t m^{2} e^{-C \log(m)},
\ee 
where the second step is Markov's inequality.
Combining this bound with inequality \eqref{IneqToyHighDimBothBoundVarPhiMix}, we have
\be  \label{TrivialHitMixIneq}
\P[\tEsc[\emptyset] < 64 m \varphi_{\max} | X_{0} = (0,\ldots,0)] \leq \frac{1}{m}.
\ee 
Inequality \eqref{IneqIntermediateHittingLowerBound} also gives
\be 
\P[\tEsc[\emptyset] < \frac{1}{192 m^{2}} e^{C \log(m)} | X_{0} = (0,\ldots,0)] \leq \frac{1}{2},
\ee 
so
\be \label{IneqExpectedHittingTimeLowerBound}
\E_{\emptyset}[\tEsc[\emptyset]] \geq \frac{1}{384 m^{2}} e^{C \log(m)}.
\ee 
Finally, using inequalities \eqref{EqContAssumptionToyHighDimConc} and \eqref{IneqConcTEscAll} and the calculation in Example 12.17 of \cite{LPW09} on the mixing time of simple random walk on the hypercube, we can apply Theorem \ref{ThmContCondHighDim} with constants 
\be 
\alpha &= 1 - \frac{1}{2m},
\,\beta = \frac{1}{m^{3}}, \,D_{\max} = m \\
a_{1} = 16 a_{2} &= 2 \frac{\E[\tEsc[\emptyset] | X_{0} = (0,0,\ldots,0)] \log_{2}(e)}{\varphi_{\emptyset} m } \\
\delta_{1} = \delta_{2} &= \frac{1}{2},\,\,
\overline{\varphi} \leq 2 m \log(m). 
\ee 
The associated value of $T$ may be taken to be $T = 8 m \log(8m)$. By inequality \eqref{TrivialHitMixIneq}, we have that $a_{1}, a_{2} \gg 1$. This completes the proof of inequality \eqref{IneqBigListOfConstantsToyHighDimEx}.

%\section*{Appendix D}
%We prove inequality \eqref{IneqPincenezSymmetry}. Throughout the following problem, we emphasize that the random time $\tau_{\{ n + 1 \}}$ is always defined with respect to the `outer' expectation, not the inner conditional expectation.
%\be
%\E_{1}[\kappa_{1}[T]] &= \E_{1}[ \tau_{\{n+1\}} + \E_{n+1}[ (T + 1- S) - \kappa_{2}(T  - S) | S = \tau_{\{n+1\}}]] \\
%&= \E_{1}[\tau_{\{ n + 1 \}} + (T +1 - \tau_{\{n+1\}}) - \E_{n+1}[ \kappa_{2}(T  - S) |  S =\tau_{\{n+1\}}]]  \\
%&= T +1 - \E_{1}[ \E_{n+1}[ \kappa_{2}(T - S) | S = \tau_{\{n+1\}}]] \\
%&= T +1 - \E_{1}[ \E_{1}[\kappa_{1}(T - S) | S= \tau_{\{n+1\}}]].
%\ee
%Thus,
%\be 
%\E_{1}[\kappa_{1}(T)] + \E_{1}[ \E_{1}[\kappa_{1}(T - S) | S =\tau_{\{n+1\}}]] = T + 1.
%\ee 
%Since $\E_{1}[\kappa_{1}(s)] \geq \E_{1}[\kappa_{1}(t)]$ for any $s \geq t$, this implies
%\be 
%\E_{1}[\kappa_{1}(T)] + \E_{1}[\kappa_{1}(T)] \geq \E_{1}[\kappa_{1}(T)] + \E_{1}[ \E_{1}[\kappa_{1}(T - S) | S =\tau_{\{n+1\}}]] = T + 1,
%\ee 
%which is exactly the desired inequality.
\end{document}